\DeclareFontFamily{U}{mathx}{\hyphenchar\font45}
\DeclareFontShape{U}{mathx}{m}{n}{
      <5> <6> <7> <8> <9> <10>
      <10.95> <12> <14.4> <17.28> <20.74> <24.88>
      mathx10
      }{}
\DeclareSymbolFont{mathx}{U}{mathx}{m}{n}
\DeclareMathAccent{\widecheck}{0}{mathx}{"71}
\DeclareMathAccent{\wideparen}{0}{mathx}{"75}
\newtheorem{theorem}{Theorem} [section]
\newtheorem{lemma}[theorem]{Lemma}
\newtheorem{proposition}[theorem]{Proposition}
\newtheorem{corollary}[theorem]{Corollary}
\theoremstyle{definition}
\newtheorem{remark}[theorem]{Remark}
\DeclareMathOperator*{\supp}{supp}
\newcommand{\noi}{\noindent}
\newcommand{\Z}{\mathbb{Z}}
\newcommand{\R}{\mathbb{R}}
\newcommand{\C}{\mathbb{C}}
\newcommand{\T}{\mathbb{T}}
\newcommand{\Psia}{\Psi}
\newcommand{\Psim}{\Psi}
\let\Im=\undefined\DeclareMathOperator*{\Im}{Im}
\let\P= \undefined
\newcommand{\P}{\mathbb{P}}
\renewcommand{\L}{\mathcal{L}}
\newcommand{\N}{\mathcal{N}}
\newcommand{\F}{\mathcal{F}}
\def\norm#1{\left\|#1\right\|}
\newcommand{\nb}{\nabla}
\newcommand{\Dl}{\Delta}
\newcommand{\eps}{\varepsilon}
\newcommand{\ft}{\widehat}
\newcommand{\cj}{\overline}
\renewcommand{\l}{\ell}
\newcommand{\les}{\lesssim}
\newcommand{\jb}[1]
{\langle #1 \rangle}
\newcommand{\ind}{\mathbf 1}
\newcommand{\Irm}{\mathrm{I}}
\newcommand{\IIrm}{\mathrm{I\!I}}
\numberwithin{equation}{section}
\numberwithin{theorem}{section}
\newcommand{\NN}{\mathbb{N}}
\newcommand{\EE}{\mathbb{E}}
\newcommand{\one}{\mathbbm{1}}
\newcommand{\sub}[1]{_{#1}}
\newcommand{\longeqn}{\hspace{2em}&\hspace{-2em}}
\newcommand{\xsb}[3]{\left\lVert#1\right\rVert_{X^{#2,#3}}}
\newcommand{\xsbt}[4]{\left\lVert#1\right\rVert_{X^{#2,#3}({#4})}}
\newcommand{\intud}[4]{\int_{#1}^{#2}\,{#3}\, d{#4}}
\newcommand{\intd}[3]{\int_{#1}{#2}\, d{#3}}
\newcommand{\brac}[1]{\left(#1\right)}
\newcommand{\supT}{\sup_{t\in[0,T]}}
\begin{document}
\selectlanguage{english}

\title[SNLS on $\T^d$]
{Stochastic nonlinear Schr\"{o}dinger equations on tori}

\author{Kelvin Cheung}
\address{
Maxwell Institute for Mathematical Sciences\\
Department of Mathematics\\
Heriot-Watt University\\
Edinburgh\\
UK\\
EH14 4AS}
\email{K.K.Cheung-3@sms.ed.ac.uk}

\author{Razvan Mosincat}
\address{
Maxwell Institute for Mathematical Sciences\\
School of Mathematics\\
University of Edinburgh\\ 
Edinburgh\\
UK\\
EH9 3FD}
\email{r.o.mosincat@sms.ed.ac.uk}

\subjclass[2010]{60H15}

\keywords{stochastic nonlinear Schr\"odinger equations; well-posedness}

\date{\today}

\begin{abstract}
We consider the stochastic nonlinear Schr\"odinger equations (SNLS) posed on
 $d$-dimensional tori with either
additive or multiplicative stochastic forcing. In particular, for
the one-dimensional cubic SNLS, we prove global well-posedness in
$L^2(\T)$.  As for other power-type nonlinearities, namely
(i) (super)quintic when $d = 1$ and (ii) (super)cubic when $d \geq 2$, we prove
local well-posedness in all scaling-subcritical Sobolev spaces and
global well-posedness in the energy space for the defocusing,
energy-subcritical problems.
\end{abstract}

\maketitle

\section{Introduction}

\subsection{Stochastic nonlinear Schr\"{o}dinger equations} 
In this paper, we study the following Cauchy problem 
associated to a stochastic nonlinear Schr\"{o}dinger equation (SNLS) of the form: 
\begin{equation}
\label{SNLS}
\begin{cases}
i\partial_t u - \Delta u  \pm |u|^{2k}u = F(u,\phi\xi)\\
u|_{t=0}=u_0 \in H^s(\T^d)
\end{cases}
\ (t,x)\in (0,\infty)\times\T^d,
\end{equation}
where $k,d\geq 1$ are integers, 
$\T^d~:=~ \R^d/\Z^d$, 
and 
$u:[0,\infty)\times \T^d\to\C$ is the unknown stochastic process. 
The term $F(u,\phi\xi)$ is a stochastic forcing and in this paper 
we treat the following cases: the \emph{additive noise}, i.e. 
\begin{equation}
\label{add-noise}
F(u,\phi\xi) = \phi\xi
\end{equation}
and the \emph{(linear) multiplicative noise}, i.e. 
\begin{equation}
\label{mult-noise}
F(u,\phi\xi) = u\cdot \phi\xi ,
\end{equation}
where the right-hand side of \eqref{mult-noise} is understood as an It\^o product\footnote{ 
The multiplicative noise given by the Stratonovich product $u \circ \phi\xi$ with  real-valued $\xi$  
is relevant in physical applications, as it  conserves the mass of $u$ 
(i.e. $t\mapsto \|u(t)\|^2_{L_x^2(\T^d)}$ is constant) almost surely. 
Our analysis can handle either the It\^{o} or the Stratonovich product, 
and we choose to work with the former for the sake of simpler exposition. 
}. 
Here, $\xi$ is a space-time white noise, i.e. a Gaussian stochastic process with correlation function 
$\EE[\xi(t,x)\xi(s,y)] = \delta(t-s) \delta(x-y)$, where $\delta$ denotes the Dirac delta function. 
We recall that the white noise is very rough: the spatial regularity of $\xi$ is less than $-\frac{d}{2}$. 
Since the linear Schr\"{o}dinger equation does not provide any smoothing properties, 
we consider instead a spatially smoothed out version $\phi\xi$, 
where $\phi$ is a linear operator from $L^2(\T^d)$ into $H^s(\T^d)$,  
on which we make certain assumptions, depending on 
whether we are working with  \eqref{add-noise} or \eqref{mult-noise}.

Our main goal in this paper is to prove local well-posedness  of SNLS 
with either additive or multiplicative noise  in the Sobolev space $H^s(\T^d)$, 
for any subcritical non-negative regularity $s$ (see below for the meaning of ``subcritical''). 
In this work, 
solutions to \eqref{SNLS} 
are understood as solutions to the 
\emph{mild formulation} 
\begin{equation}
\label{SNLS-mild}
u(t) = S(t) u_0 \pm i\int_0^t S(t-t') (|u|^{2k}u)(t')\,dt' - i \Psi(t)\ ,\ t\geq0\,,
\end{equation}
where $S(t) :=e^{-it\Delta}$ is the linear Schr\"{o}dinger propagator. The term $\Psi(t)$ is a  \emph{stochastic convolution}  
corresponding to the stochastic forcing $F(u,\phi\xi)$, 
see \eqref{sc:Psia} and \eqref{sc:Psim} below. 
Our local-in-time argument uses the Fourier restriction norm method introduced by Bourgain \cite{bourgain-93-1} 
and the periodic Strichartz estimates proved by Bourgain and Demeter~\cite{bourgain-demeter}. 
In establishing local well-posedness for the multiplicative SNLS, 
we also have to combine these tools with the truncation method 
used by de Bouard and Debussche 
\cite{debouard-debussche-nls, debouard-debussche-nls-mult, debouard-debussche-kdv}.
Moreover,  
by proving probabilistic a priori bounds on the mass and  energy of  solutions, 
we establish global well-posedness in 
(i)  $L^2(\T)$ for cubic nonlinearities (i.e. $k=1$) when $d=1$, and 
(ii) $H^1(\T^d)$ for all  defocusing
energy-subcritical nonlinearities -- see Theorem~\ref{gwp:add} and the preceding discussion for more details. 

Previously, de Bouard and Debussche \cite{ debouard-debussche-nls-mult, debouard-debussche-nls} 
 studied SNLS on $\R^d$. 
They considered noise $\phi\xi$ that is white in time but correlated in space, 
where $\phi$ is a smoothing operator from $L^2(\R^d)$ to $H^s(\R^d)$.  
They proved  global existence and uniqueness of mild solutions  in (i) $L^2(\R)$ 
for the one-dimensional cubic SNLS 
and (ii)  $H^1(\R^d)$ for  defocusing energy-subcritical SNLS. 
Other works related to SNLS on $\R^d$ include the works by 
Barbu, R\"{o}ckner, and Zhang \cite{barbu-rockner-zhang14, barbu-rockner-zhang16} 
and by Hornung \cite{hornung}. 

On the $\R^d$ setting, the arguments given in  \cite{ debouard-debussche-nls-mult, debouard-debussche-nls}  
use fixed point arguments in the space $C_tH_x^1 \cap L_t^pW_x^{1,q}([0,T]\times\R^d)$, 
for some $T>0$ and some suitable  $p,q\ge 1$.
\footnote{Here, $W^{s, r}(\T^d)$
denotes the $L^r$-based Sobolev space defined by the Bessel potential norm:
\[\| u\|_{W^{s, r}(\T^d)} := \| \jb{\nb}^s u \|_{L^r(\T^d)} = \big\| \F^{-1}( \jb{n}^s \ft u(n))\big\|_{\ell_n^r(\Z^d)},\]
where $\jb{n}:=\sqrt{1+|n|^2}$. 
When $r = 2$, we have $H^s(\T^d) = W^{s, 2}(\T^d)$.} 
In particular, they use the (deterministic) Strichartz estimates: 
\begin{equation}
\|S(t)f\|_{L_t^pL_x^q(\R\times\R^d)} \le C_{p,q} \|f\|_{L^2_x(\R^d)}, 
\end{equation}
where the pair $(p,q)$ is admissible, i.e. $\frac2p+ \frac{d}{q} = \frac{d}{2}$, $2\le p,q,\le \infty$, 
and $(p,q,d)\neq (2,\infty,2)$. 
On $\T^d$, 
Bourgain and Demeter \cite{bourgain-demeter} proved the $\ell^2$-decoupling conjecture,
 and as a corollary, 
the following periodic Strichartz estimates:  
\begin{equation}
\label{perStrich}
\big\|S(t)P_{\le N} f\big\|_{L_{t,x}^p([0,T]\times\T^d)} \le 
 C_{p,T,\varepsilon}  N^{\frac{d}{2}- \frac{d+2}{p} +\varepsilon} 
\|f\|_{L^2_x(\T^d)}\,.
\end{equation}
Here,  $P_{\le N}$ is the Littlewood-Paley projection onto frequencies $\{n \in \Z^d :  |n|\le N\}$, \linebreak
$p\ge \frac{2(d+2)}{d}$, and $\eps>0$ is an arbitrarily small quantity \footnote{More recently, Killip and Vi\c{s}an \cite{killip-visan} removed the arbitrarily small loss of $\varepsilon$ derivatives in 
 \eqref{perStrich} when $p>\frac{2(d+2)}{d}$. 
 However, we do not need this scale-invariant improvement in our results.}.
However, such Strichartz estimates are not strong enough for a fixed point argument in mixed Lebesgue spaces for the deterministic NLS on $\T^d$. 
To overcome this problem, we shall employ the Fourier restriction norm method 
by means of $X^{s,b}$-spaces defined via the norms
\begin{equation}
\norm{u}_{X^{s,b}}
:=\big\| \jb{n}^s\jb{\tau-|n|^2}^b\F_{t,x}(u)(\tau,n)\big\|_{L^2_\tau\l^2_n(\R\times \Z^d)}\,. 
\end{equation}
The indices $s,b\in\R$ measure the spatial and temporal regularities of functions $u\in X^{s,b}$, and 
$\mathcal{F}_{t,x}$ denotes Fourier transform of functions defined on $\R\times \T^d$.  
This harmonic analytic method was introduced by Bourgain \cite{bourgain-93-1} 
for the deterministic nonlinear Schr\"{o}dinger equation (NLS): 
\begin{align}
i\partial_t u - \Delta u  \pm |u|^{2k}u =0 \,.
\end{align}

\subsection{Main results}
We now state more precisely the problems considered here. Let 
$(\Omega, \mathcal{A},\{\mathcal{A}_t\}_{t\ge 0}, \mathbb{P})$ be a filtrated probability space. Let $W$ be the $L^2(\T^d)$-cylindrical Wiener process given by
\begin{equation}
W(t,x,\omega) := \sum_{n\in\Z^d} \beta_n(t,\omega) e_n(x),\,
\end{equation}
where $\{\beta_n\}_{n\in\Z^d}$ is a family of independent complex-valued Brownian motions 
associated with the filtration $\{\mathcal{A}_t\}_{t\ge 0}$ 
and 
$e_n(x):= \exp( 2\pi i n\cdot x)$,  $n\in\Z^d$.  
The space-time white noise $\xi$ 
is given by  the (distributional) time derivative of $W$, i.e.  $\xi=\frac{\partial W}{\partial t}$. 
Since the spatial regularity of $W$ is too low (more precisely, 
for each fixed $t\ge 0$, 
$W(t)\in H^{-\frac{d}{2}-\varepsilon}(\T^d)$
almost surely for any $ \varepsilon>0$), 
we consider a smoothed out version $\phi W$ as follows. 
Recall that a bounded linear operator  $\phi$ from a separable Hilbert space $H$ to a Hilbert space $K$ is  Hilbert-Schmidt  if 
\begin{equation}
\|\phi\|_{\mathcal{L}^2(H;K)}^2 := \sum_{n\in \Z^d} \| \phi h_n\|_{K}^2 <\infty\,,
\end{equation}
where $\{h_n\}_{n\in\Z^d}$ is an orthonormal basis of $H$ (recall that $\|\cdot\|_{\mathcal{L}^2(H;K)}$ does not depend on the choice of $\{h_n\}_{n\in\Z^d}$). 
Throughout this work, 
we assume $\phi\in \mathcal{L}^2(L^2(\T^d); H^s(\T^d))$ for appropriate $s\ge 0$. 
In this case, $\phi W$ is a Wiener process with sample paths in $H^s(\T^d)$ and 
its time derivative  $\phi\xi$   corresponds to a noise 
which is white in time and correlated in space (with correlation function depending on $\phi$). 
We can now define the stochastic convolution $\Psi(t)$ from \eqref{SNLS-mild} for 
(i) the additive noise \eqref{add-noise}: 
\begin{equation}
\label{sc:Psia}
\Psia(t) :=\int_0^t S(t-t') \phi \,dW(t')
\end{equation}
and (ii) the multiplicative noise \eqref{mult-noise}: 
\begin{equation}
\label{sc:Psim}
\Psim(t):= \Psi[u](t) := \int_0^t S(t-t') u(t') \phi \,dW(t')\,. 
\end{equation}  

\noi
We are now ready to state our first result. 
\begin{theorem}[Pathwise local well-posedness for additive SNLS]
	\label{lwp:add}
Given  $s>s_{\textup{crit}}$ non-negative, let $\phi\in \mathcal{L}^2(L^2(\T^d); H^s(\T^d))$ and 
$F(u,\phi)=\phi\xi$.  
Then for any $u_0\in H^s(\T^d)$, there exist a stopping time $T=T(\|u_0\|_{H^s},\Psi)$ that is almost surely positive, and	a unique adapted process $u \in   C([0,T];H^s(\T^d)) \cap X^{s,\frac12-\varepsilon}([0,T])$ solving SNLS with additive noise on $[0,T]$ almost surely, for some $\varepsilon>0$.	
\end{theorem}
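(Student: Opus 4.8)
The plan is to solve the mild equation \eqref{SNLS-mild} by a pathwise Picard iteration in the Fourier restriction space $X^{s,\frac12-\eps}([0,T])$, treating the stochastic convolution $\Psi$ as a fixed (random) inhomogeneity. The key structural feature of the additive case is that the noise enters \eqref{SNLS-mild} only through $\Psi$, which does not depend on the unknown $u$; once $\Psi$ is controlled realization by realization, the remaining problem is entirely deterministic and can be handled by a contraction argument.

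First I would establish the regularity of the stochastic convolution: for $\phi\in\mathcal{L}^2(L^2(\T^d);H^s(\T^d))$, the process $\Psi(t)=\int_0^t S(t-t')\phi\,dW(t')$ satisfies $\EE\|\Psi\|^2_{X^{s,\frac12-\eps}([0,T])}<\infty$ for every $\eps>0$ and $T>0$. Writing $\Psi(t)=S(t)\int_0^t S(-t')\phi\,dW(t')$ and applying the It\^o isometry together with the Hilbert--Schmidt bound $\sum_{n}\|\phi e_n\|_{H^s}^2<\infty$ reduces the $X^{s,b}$-norm to a time integral whose convergence is exactly what forces $b<\tfrac12$; this reflects the $\tfrac12-\eps$ H\"older regularity of Brownian motion in time, and at $b=\tfrac12$ the bound diverges. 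A Kolmogorov/factorization argument gives in addition $\Psi\in C([0,T];H^s(\T^d))$ almost surely. Consequently, for almost every $\o$ the quantity $\|\Psi(\o)\|_{X^{s,\frac12-\eps}([0,T])}$ is finite and, being nonincreasing as $T\downarrow0$, can be rendered small.

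The heart of the argument is the deterministic multilinear estimate
\begin{equation*}
\big\||u|^{2k}u\big\|_{X^{s,-\frac12+\eps}([0,T])}\les T^{\theta}\,\|u\|_{X^{s,\frac12-\eps}([0,T])}^{2k+1},
\end{equation*}
valid for some $\theta,\eps>0$ precisely in the subcritical range $s>\scrit$. I would prove it by transferring the periodic Strichartz estimates \eqref{perStrich} of Bourgain--Demeter to the $X^{s,b}$-setting via the transference principle, reducing matters to a frequency-localized product estimate; a Littlewood--Paley decomposition combined with bilinear/H\"older arguments and summation over dyadic frequency blocks then yields the bound, with the gain $T^{\theta}$ coming from the slack between the modulation exponents $-\tfrac12+\eps$ and $\tfrac12-\eps$. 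Together with the standard linear estimates $\|S(t)u_0\|_{X^{s,\frac12-\eps}([0,T])}\les\|u_0\|_{H^s}$ and the inhomogeneous estimate mapping $X^{s,-\frac12+\eps}([0,T])$ into $X^{s,\frac12+\eps}([0,T])$, the map $\Gamma u:=S(t)u_0\mp i\int_0^tS(t-t')(|u|^{2k}u)(t')\,dt'-i\Psi$ becomes a contraction on a ball in $X^{s,\frac12-\eps}([0,T])$ of radius comparable to $\|u_0\|_{H^s}+\|\Psi\|_{X^{s,\frac12-\eps}([0,T])}$, once $T$ is chosen small relative to that radius. Banach's fixed point theorem then supplies the unique solution.

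The time continuity and the remaining structural claims follow from the construction. Because the nonlinear Duhamel term is in fact estimated in $X^{s,\frac12+\eps}([0,T])\embeds C([0,T];H^s(\T^d))$, while $S(t)u_0$ and $\Psi$ both lie in $C([0,T];H^s(\T^d))$, the solution inherits $u\in C([0,T];H^s(\T^d))$ even though its global $X^{s,b}$-regularity is capped at $b=\tfrac12-\eps$ by the roughness of the noise. Adaptedness is preserved along the Picard iterates, each obtained from the adapted process $\Psi$ by linear and Duhamel operations, so the limit is adapted; defining $T$ through the almost surely finite random norm $\|\Psi\|_{X^{s,\frac12-\eps}([0,T])}$ makes it an almost surely positive stopping time. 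I expect the principal obstacle to be the subcritical multilinear estimate: on $\T^d$ the Strichartz inequalities carry $\eps$-losses and are not scale-invariant, so the dyadic pieces must be tracked and summed with care, and the constraint $b<\tfrac12$ imposed by $\Psi$ forces the nonlinearity to be placed in the dual-type space $X^{s,-\frac12+\eps}$ rather than in the more convenient $X^{s,b-1}$ with $b>\tfrac12$.
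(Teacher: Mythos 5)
Your proposal is correct and follows essentially the same route as the paper: a pathwise contraction in $X^{s,\frac12-\varepsilon}([0,T])$ with $\Psi$ treated as a random inhomogeneity, controlled via Burkholder--Davis--Gundy/It\^o bounds (forcing $b<\frac12$) and the Da Prato factorization for continuity, the nonlinearity handled by Bourgain--Demeter--based multilinear estimates, and continuity of $u$ recovered a posteriori since the Duhamel term lands in $X^{s,\frac12+}\hookrightarrow C([0,T];H^s)$. The only cosmetic difference is that you fold the smallness factor $T^{\theta}$ into the multilinear estimate, whereas the paper extracts it separately through the time-localization lemma (Lemma~\ref{xsb-time-loc}) before applying the Duhamel and multilinear bounds --- the same modulation-slack mechanism either way.
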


Here, $X^{s,b}([0,T])$ is a time restricted version of  the $X^{s,b}$-space, see \eqref{Xsblocal} below.   
The proof of this result relies on a fixed point argument for \eqref{SNLS-mild} in a closed subset 
of   $X^{s,b}([0,T])$. 
We are required to use 
$b=\frac12-\varepsilon$ in order to capture the temporal regularity of $\Psi$.  
Since $X^{s,b}([0,T])$ does not embed into $C([0,T];H^s)$ when $b<\frac12$,  
 we need to prove the continuity in time of solutions a posteriori. 
Our local well-posedness result above (as well as Theorem~\ref{lwp:mult} below) covers all non-negative subcritical regularities. 

\begin{remark}\label{remark-critical}
We point out that  $s_{\textup{crit}}$ is negative only for the one-dimensional cubic NLS, 
i.e.  $(d,k)=(1,1)$ for which $s_{\textup{crit}}=-\frac12$.  
Below $L^2(\T)$, the deterministic cubic NLS on $\T$ was shown to be ill-posed.  
Indeed, Christ, Colliander and Tao \cite{CCT03instab}  and Molinet \cite{MolinetMRL} 
showed that the solution map $u_0\in H^s(\T) \mapsto u(t)\in H^s(\T)$ is discontinuous whenever $s<0$. 
More recently, Guo and Oh \cite{guo-oh} showed an even stronger ill-posedness result, 
in the sense that  for any $u_0\in H^s(\T)$, $s\in (-\frac18,0)$,  
 there is no distributional solution $u$ that is also a limit of smooth solutions in $C([-T,T]; H^s(\T))$.  
In the (super)critical regime, i.e. for $s\le -\frac12 =s_{\textup{crit}}$, 
Oh \cite{Oh17rmk} and Oh and Wang \cite{OhWangillposed} 
showed a norm inflation phenomenon at any $u_0\in H^s(\T)$: for any $\varepsilon>0$ and $u_0\in H^s(\T)$, 
there exists a solution $u^{\varepsilon}$ to NLS  such that 
$\|u^{\varepsilon}(0)-u_0\|_{H^s(\T)}<\eps$ and 
$\|u^{\varepsilon}(t)\|_{H^s(\T)}>\eps^{-1}$ 
for some  $t\in (0, \varepsilon)$. 
\end{remark}

\begin{remark}
Although we present our results for SNLS on the standard torus $\T^d=\R^d/\Z^d$, our arguments hold on any torus $\T_{{\boldsymbol\alpha}}^d=\prod_{j=1}^d\R/{\alpha_j}\Z\,$, where $\boldsymbol{\alpha}=(\alpha_1, ..., \alpha_d)\in [0,\infty)^d$. This is because the periodic Strichartz estimates \eqref{perStrich} of Bourgain and Demeter \cite{bourgain-demeter} hold for irrational tori ($\T_{{\boldsymbol\alpha}}^d$ is irrational  if there is no $\boldsymbol\gamma\in\mathbb Q^d$ such that $\boldsymbol{\gamma}\cdot\boldsymbol{\alpha}=0$). 
Prior to \cite{bourgain-demeter},
Strichartz estimates were harder to establish on irrational tori 
-- see \cite{guo-oh-wang} and references therein. 
\end{remark}

\begin{remark}
The deterministic NLS is   
locally well-posed in the critical space $H^{s_{\textup{crit}}}(\T^d)$, 
for almost all pairs $(d,k)$, except for the cases $(1,2), (2,1), (3,1)$ which are still open -- see \cite{BourgainIJM13,HTTduke11,HTTjram14,wang-nls}.  
In these papers, the authors employ the critical spaces $X^s, Y^s$ 
based on the spaces $U^2$, $V^2$ of Koch and Tataru \cite{KochTataruCPAM05}. 
We point out that Brownian motions belong almost surely to $V^p$, for $p>2$, 
but not  $V^2$ (hence neither to  $U^2$). 
Consequently,  the spaces  $X^s, Y^s$ are not suitable for obtaining local well-posedness of  SNLS.  
\end{remark}

Now let us recall the following conservation laws for the 
deterministic NLS: 
\begin{align}
\label{defn:mass}
M(u(t)) &:= \frac12\int_{\T^d} |u(t,x)|^2\,dx\\
\label{defn:energy}
E(u(t)) &:= \frac12 \int_{\T^d} |\nabla_x u(t,x)|^2 \pm \frac{1}{2k+2} \int_{\T^d} |u(t,x)|^{2k+2} \,dx, 
\end{align}
where the sign $\pm$ in \eqref{defn:energy} matches that in \eqref{SNLS} and \eqref{SNLS-mild}. 
Recall that SNLS \eqref{SNLS} with the  $+$ sign is called defocusing (and focusing for the $-$ sign). 
We say that SNLS is energy-subcritical if $s_{\textup{crit}}<1$ 
(i.e. for any $k\ge 1$ when $d=1,2$ and for $k=1$ when $d=3$). 
 
For solutions of SNLS these quantities are no longer necessarily conserved. 
However, It\^{o}'s lemma allows us to bound these in a probabilistic manner similarly to de Bouard and Debussche \cite{debouard-debussche-nls, debouard-debussche-nls-mult}.
Therefore, we obtain the following:

\begin{theorem}[Pathwise global well-posedness for additive SNLS]\label{gwp:add}
	Let $s\ge 0$.  Given $\phi\in\mathcal{L}^2(L^2(\T^d); H^s(\T^d))$, let $F(u,\phi)=\phi\xi$ 
	and $u_0\in H^s(\T^d)$. Then the $H^s$-valued solutions of Theorem~\ref{lwp:add} extend globally in time almost surely in the following cases:
	
	\vspace{.1cm}
	
	\noi
	\textup{ (i)} the (focusing or defocusing) one-dimensional cubic SNLS for all $s\ge 0$; 
	
	\noi
	\textup{\ (ii)} the defocusing energy-subcritical SNLS for all $s\ge 1$. 
	
\end{theorem}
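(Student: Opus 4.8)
The plan is to reduce both global existence claims to almost-sure a priori bounds on the relevant subcritical norm and then invoke the blow-up alternative furnished by the local theory. Concretely, Theorem~\ref{lwp:add} yields a unique maximal solution on a random interval $[0,\tau^*)$, and since the local existence time depends only on $\|u(t_0)\|_{H^s}$ and on the solution-independent stochastic convolution $\Psi$, one has on $\{\tau^*<\infty\}$ the dichotomy that $\lim_{t\uparrow\tau^*}\|u(t)\|_{H^s}=+\infty$. Hence it suffices to control $\sup_{t\le T\wedge\tau^*}\|u(t)\|_{L^2}$ in case (i), respectively $\sup_{t\le T\wedge\tau^*}\|u(t)\|_{H^1}$ in case (ii), for every deterministic $T>0$; the bound for a general subcritical $s>0$ then follows by persistence of regularity in the subcritical local theory once the lower-order norm is globally controlled. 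To make the It\^{o} calculus below rigorous despite the solution living only in $X^{s,\frac12-\eps}$, I would first stop at $\tau_R:=\inf\{t:\|u(t)\|_{H^s}\ge R\}$ and work with frequency truncations $P_{\le N}u$, apply It\^{o}'s lemma there, and then pass to the limits $N\to\infty$ and $R\to\infty$ by monotone convergence.

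\textbf{Case (i) (mass bound).} Writing the equation in It\^{o} form $du=(-i\Dl u\pm i|u|^{2}u)\,dt-i\phi\,dW$ and applying It\^{o}'s lemma to $M(u)=\tfrac12\|u\|_{L^2}^2$, the Hamiltonian drift drops out exactly as in the deterministic mass conservation, since $\Re\jb{-i\Dl u\pm i|u|^{2}u,\,u}=0$. One is left with
\begin{equation*}
M(u(t))=M(u_0)+\tfrac{t}{2}\,\|\phi\|_{\L^2(L^2;L^2)}^2+\int_0^t\Re\jb{u(t'),\,-i\phi\,dW(t')}.
\end{equation*}
The correction term is linear in $t$, and the martingale has quadratic variation bounded by $\|\phi\|_{\L^2(L^2;L^2)}^2\int_0^t\|u(t')\|_{L^2}^2\,dt'$. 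Thus the Burkholder--Davis--Gundy inequality together with a Gronwall argument yields $\EE\big[\sup_{t\le T}\|u(t)\|_{L^2}^2\big]\les(\|u_0\|_{L^2}^2+1)\,e^{CT}<\infty$. This argument never uses the sign of the nonlinearity, which is why case~(i) covers both the focusing and defocusing equations.

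\textbf{Case (ii) (energy bound).} Here I would bound $\|u\|_{H^1}^2\les M(u)+E(u)$, using in the defocusing case that both summands of $E$ are non-negative so that $E(u)$ controls $\tfrac12\|\nb u\|_{L^2}^2$, while $M(u)$ is already controlled by case~(i). Applying It\^{o}'s lemma to $E(u)$, the deterministic drift again vanishes by energy conservation, $DE(u)\cdot(-i\Dl u+i|u|^{2k}u)=0$. The It\^{o} correction $\tfrac12\sum_n D^2E(u)(\phi e_n,\phi e_n)$ splits into a kinetic part $\tfrac12\|\phi\|_{\L^2(L^2;\dot H^1)}^2$, a finite constant precisely because $\phi\in\L^2(L^2;H^1)$ (this is where $s\ge1$ enters), and a potential part bounded by $C\int_{\T^d}|u|^{2k}\big(\sum_n|\phi e_n|^2\big)\,dx$. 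Choosing the H\"older exponents $(r,r')=(1+\tfrac1k,\,k+1)$ and invoking the energy-subcritical embedding $H^1(\T^d)\embeds L^{2k+2}(\T^d)$ gives
\begin{equation*}
\int_{\T^d}|u|^{2k}\Big(\sum_n|\phi e_n|^2\Big)\,dx\les\|u\|_{L^{2k+2}}^{2k}\,\|\phi\|_{\L^2(L^2;H^1)}^2\les(1+E(u))\,\|\phi\|_{\L^2(L^2;H^1)}^2.
\end{equation*}
A parallel estimate, again via $H^1\embeds L^{2k+2}$, bounds the quadratic variation of the energy martingale by a power of $1+E(u)$. After stopping at $\tau_R$ to guarantee integrability, Gronwall yields a moment bound uniform in $R$, and BDG upgrades this to $\EE\big[\sup_{t\le T\wedge\tau_R}E(u(t))\big]<\infty$ uniformly in $R$; letting $R\to\infty$ and combining with the mass bound produces the desired $H^1$ control.

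The main obstacle is the energy estimate in case~(ii): unlike the mass, the energy is a genuinely nonlinear functional, so both its It\^{o} correction and the quadratic variation of its martingale produce super-quadratic expressions in $u$ that must be reabsorbed into $E(u)$ itself. Keeping these within reach of the Sobolev embedding is exactly what forces both the restriction $\scrit<1$ and the defocusing sign, and it is where the structure of the problem on $\T^d$ must be used most carefully. A secondary but still delicate point is the rigorous justification of It\^{o}'s formula for the low-regularity mild solution, which I would handle by the truncation and approximation scheme indicated above, following de~Bouard and Debussche.
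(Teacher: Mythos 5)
Your a priori estimates coincide in substance with the paper's: Proposition~\ref{apriori-est-M-a} applies It\^o's lemma to $M(u^N)^m$ and $E(u^N)^m$ for the frequency-truncated equation \eqref{SNLS_N}, kills the drift by the conservation-law structure, and controls the martingale and It\^o correction by Burkholder--Davis--Gundy together with the duality bound $\big|\int_{\T^d}|u^N|^{2k}u^N\,\phi e_j\,dx\big|\les \||u^N|^{2k}u^N\|_{L^{(2k+2)/(2k+1)}}\|\phi e_j\|_{H^1}\les E(u^N)^{\frac{2k+1}{2k+2}}\|\phi e_j\|_{H^1}$ under $1+\frac1k\ge\frac d2$ --- the same energy-subcritical Sobolev mechanism as your H\"older choice $(1+\frac1k,\,k+1)$ --- closing via the stopping times $T_R$ and induction on $m$. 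Where you genuinely diverge is the globalization mechanism. The paper does not invoke a blow-up alternative for the additive case: it fixes $T$ and $\eps$, subdivides $[0,T]$ into intervals of length $\delta=c(R+L)^{-\theta}$ dictated by the explicit local time \eqref{time-of-existence-a}, controls the stochastic convolution on all subintervals simultaneously on an event $\Omega_0$ via Chebyshev and Lemma~\ref{stoc-conv-est-add}, and iterates the local solver, using Chebyshev with a \emph{high} moment $p$ of $M$ or $E$ at the subdivision points to make $\sum_k\P(\Omega_k\setminus\Omega_{k+1})$ small; this is precisely why Proposition~\ref{apriori-est-M-a} proves all moments $m$, whereas your route needs only low moments. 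Your stopping-time argument is legitimate for additive noise --- it is in fact the shape of the paper's proof in the \emph{multiplicative} case (Theorem~\ref{gwp:mult}, via $\P(\tau^*<T)=0$) --- but two points deserve spelling out. First, the local time \eqref{time-of-existence-a} depends on $\|\Psi\|_{X^{s,b}}$ over the interval in question, so restarting at $t_0$ requires control of the restarted convolution $\int_{t_0}^t S(t-t')\phi\,dW(t')=\Psi(t)-S(t-t_0)\Psi(t_0)$; pathwise on a compact $[0,T]$ its norm is uniformly bounded by $\|\Psi\|_{X^{s,b}([0,T])}+c\sup_{t\le T}\|\Psi(t)\|_{H^s}$, which is a.s.\ finite by Lemmas~\ref{stoc-conv-est-add} and~\ref{cts-stoc-conv-add}, so the dichotomy on $\{\tau^*<\infty\}$ does hold, but it is not automatic from the statement of Theorem~\ref{lwp:add}. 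Second, ``pass to the limit $N\to\infty$ by monotone convergence'' undersells a genuine step: monotone convergence handles only the $R$-limit, while transferring the bounds from $u^N$ to $u$ requires proving $u^N\to u$ in $L^2(\Omega;C([0,T];H^s))$, which the paper does in Corollary~\ref{apriori-est-H1-L2} by comparing the contractions $\Lambda^N$ and $\Lambda$ and estimating the tail operators $P_{>N}\phi$ and $\pi_N P_{\le N}\phi$. Your closing remark on persistence of regularity for general subcritical $s$ is a fair gloss of a point the paper itself leaves implicit (its written proof takes $s\in\{0,1\}$); the factor-of-two discrepancy in your It\^o correction for the mass is only a normalization convention for the complex Brownian motions and is harmless.
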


We now move onto the problem with multiplicative noise, i.e. SNLS with \eqref{mult-noise}. For this case, we need a stronger assumption on $\phi$. By a slight abuse of notation, for a bounded linear operator $\phi$ from $L^2(\T^d)$ to a Banach space $B$, we say that $\phi\in\L^2(L^2(\T^d); B)$ if\footnote{In fact, 
such operators are known as nuclear operators of order 2 
and their introduction goes back to the work of A.~Grothendieck 
on nuclear locally convex spaces.} 
\begin{align*}
	\|\phi\|_{\mathcal{L}^2(L^2(\T^d);B)}^2 := \sum_{n\in \Z^d} \| \phi e_n\|_{B}^2 <\infty\,.
\end{align*}
For $s\in\R$ and $r\ge 1$, 
we also define the Fourier-Lebesgue space $\F L^{s,r}(\T^d)$ via the norm
\begin{align*}
	\norm{f}_{\F L^{s,r}(\T^d)}:=\big\|\jb{n}^s\ft{f}(n)\big\|_{\l^r_n(\Z^d)}\,.
\end{align*}
Clearly, when $r=2$ we have $\F L^{s,r}(\T^d) = H^s(\T^d)$ and 
for $s_1\le s_2$ and $r_1\le r_2$ we have 
$\F L^{s_2,r_1}(\T^d)\subset \F L^{s_1,r_2}(\T^d)$.  
We now state our local well-posedness result for the multiplicative SNLS.
\begin{theorem}[Local well-posedness for multiplicative SNLS]
	\label{lwp:mult}
Given  $s>s_{\textup{crit}}$ non-negative, let $\phi\in \mathcal{L}^2(L^2(\T^d); H^s(\T^d))$. 
 If $s\le \frac{d}{2}$, we further impose that 
\begin{align}
\label{phiFLsr}
\phi\in \mathcal{L}^2(L^2(\T^d); \F L^{s,r}(\T^d))
\end{align}
for some $r\in\big[1,\frac{d}{d-s}\big)$ when $s>0$ and $r=1$ when $s=0$.    
Let $F(u,\phi)=u\cdot \phi\xi$. 
	Then 
	for any $u_0\in H^s(\T^d)$, there exist 
	a stopping time $T$ that is almost surely positive, 
	and 
	a unique adapted process 
\begin{align}\label{L2Omega-space}
u \in L^2\big(\Omega; C([0,T];H^s(\T^d))  \cap X^{s,b}([0,T]) \big)
\end{align} 
	solving SNLS 
	with multiplicative noise. 
\end{theorem}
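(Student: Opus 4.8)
The plan is to solve the mild equation \eqref{SNLS-mild} with $\Psi=\Psi[u]$ given by \eqref{sc:Psim} by a contraction mapping argument carried out in the Bochner space $L^2\big(\Omega;X^{s,b}([0,T])\big)$ (with $b=\tfrac12-\varepsilon$ as in Theorem~\ref{lwp:add}), following the truncation scheme of de Bouard and Debussche, and then to upgrade the solution to $C([0,T];H^s)$ a posteriori. The new feature compared with the additive case is that the stochastic convolution now depends on the unknown $u$, so it must be built into the fixed-point map; since we must average over $\Omega$, a direct contraction on a ball fails for the superlinear drift $|u|^{2k}u$. To remedy this I first fix a smooth cutoff $\theta\colon[0,\infty)\to[0,1]$ with $\theta\equiv1$ on $[0,1]$ and $\supp\theta\subset[0,2]$, set $\theta_R(\cdot)=\theta(\cdot/R)$, and consider the truncated map
\[
\Gamma_R(u)(t):=S(t)u_0\pm i\int_0^t S(t-t')\,\theta_R\big(\|u\|_{X^{s,b}([0,t'])}\big)\,(|u|^{2k}u)(t')\,dt'-i\,\Psi[u](t).
\]
The diffusion term $\Psi[u]$ is left untruncated because it is \emph{linear} in $u$, hence already globally Lipschitz with a small constant for short times; only the superlinear drift needs truncation, which renders its Lipschitz constant over all of $L^2(\Omega;X^{s,b})$ bounded by $T^{\theta}R^{2k}$. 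Thus $\Gamma_R$ is a self-map and a contraction on the whole space for $T=T(R,\|\phi\|)$ small, producing a unique adapted fixed point $u_R$.

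The deterministic Duhamel term is handled exactly as in Theorem~\ref{lwp:add}: the multilinear estimate
\[
\Big\|\int_0^t S(t-t')(|u|^{2k}u)(t')\,dt'\Big\|_{X^{s,b}([0,T])}\lesssim T^{\theta}\,\|u\|_{X^{s,b}([0,T])}^{2k+1}
\]
together with its difference counterpart, valid for every $s>\scrit$ and some $\theta>0$, follows from the periodic Strichartz estimates \eqref{perStrich}; the gain $T^{\theta}$ comes from using $b<\tfrac12$ and a slightly larger Duhamel exponent. Combined with the cutoff $\theta_R$ this controls the drift part of $\Gamma_R$ on $L^2(\Omega;X^{s,b})$.

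The crux is the stochastic convolution. Using that $\|v\|_{X^{s,b}}=\|S(-t)v\|_{H^b_tH^s_x}$, I observe that $S(-t)\Psi[u](t)=\int_0^t S(-t')\,\big(u(t')\,\phi\big)\,dW(t')$ is a genuine $H^s$-valued It\^o integral (the propagator is undone), hence a martingale. Applying the Gagliardo characterization of $H^b_t$ and the It\^o isometry reduces the second moment to $\sum_{n}\|u(\sigma)\,\phi e_n\|_{H^s}^2$ integrated against $|t-t'|^{-1-2b}$; the time integral converges \emph{precisely} because $b<\tfrac12$ and yields a positive power of $T$, while $\|u\|_{L^2_tH^s}=\|u\|_{X^{s,0}}\le\|u\|_{X^{s,b}}$ supplies the input norm. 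The spatial factor is controlled by the product estimate
\[
\|fg\|_{H^s}\lesssim\|f\|_{H^s}\,\|g\|_{\F L^{s,r}}\qquad\big(0\le s\le\tfrac d2,\ 1\le r<\tfrac{d}{d-s}\big),
\]
proved by splitting $\jb{n}^s$ via Peetre's inequality and applying Young's convolution inequality, the summability $\sum_n\jb{n}^{-sr'}<\infty$ (equivalently $r<\tfrac{d}{d-s}$) being exactly what makes the low-regularity factor integrable; for $s>\tfrac d2$ one simply uses that $H^s$ is an algebra. Summing over $n$ and invoking $\sum_n\|\phi e_n\|_{\F L^{s,r}}^2=\|\phi\|_{\mathcal{L}^2(L^2(\T^d);\F L^{s,r})}^2<\infty$ then gives $\EE\|\Psi[u]\|_{X^{s,b}([0,T])}^2\lesssim T^{\theta}\|\phi\|_{\mathcal{L}^2}^2\,\EE\|u\|_{X^{s,b}([0,T])}^2$, and the linearity $\Psi[u]-\Psi[v]=\Psi[u-v]$ supplies the matching Lipschitz bound for free. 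I expect this step -- the interplay of the $X^{s,b}$ framework with the It\^o integral and the low-regularity product estimate -- to be the main obstacle, and the reason for the hypothesis \eqref{phiFLsr}.

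Collecting the three estimates, $\Gamma_R$ contracts for small $T$, giving the unique adapted $u_R\in L^2(\Omega;X^{s,b}([0,T]))$. I then remove the truncation by introducing the stopping time $\tau_R=\inf\{t:\|u_R\|_{X^{s,b}([0,t])}\ge R\}$, which is almost surely positive; on $[0,\tau_R]$ the process $u_R$ solves the original equation \eqref{SNLS-mild}, and setting $T=\tau_R$ yields the desired local solution, with uniqueness following from the contraction property together with a standard gluing argument. Finally, since $X^{s,b}$ with $b<\tfrac12$ does not embed into $C([0,T];H^s)$, I establish the continuity of $t\mapsto u(t)$ in $H^s$ a posteriori: the terms $S(t)u_0$ and the Duhamel integral are continuous by the same estimates used for the additive equation, while for $\Psi[u]$ I would use the factorization method to obtain continuous $H^s$-valued paths and the corresponding bound in $L^2(\Omega;C([0,T];H^s))$, completing the proof.
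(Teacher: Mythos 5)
Your overall scheme is the paper's: truncate the drift in the style of de Bouard--Debussche, contract in $L^2\big(\Omega;X^{s,b}([0,T])\big)$ with $b=\frac12-$, leave the (linear) stochastic convolution untruncated and gain smallness from a power of $T$, use the product estimate $\|fg\|_{H^s}\les \|f\|_{H^s}\|g\|_{\F L^{s,r}}$ (the paper's Lemma~\ref{lem3p5}, which is where hypothesis \eqref{phiFLsr} enters), glue along the stopping times $\tau_R$, and recover continuity a posteriori via factorization. Your treatment of $\Psi[u]$ --- undoing the propagator so that $S(-t)\Psi[u](t)$ is an It\^o integral, then using the Gagliardo characterization of $H^b_t$ with It\^o isometry --- is a legitimate variant of the paper's Lemma~\ref{stoc-conv-est-mult}, which instead takes the Fourier transform in time via stochastic Fubini and applies Burkholder--Davis--Gundy; both hinge on $b<\frac12$ in the same way, and your continuity step matches Lemma~\ref{cts-stoc-conv-mult}.

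There is, however, a genuine gap at the heart of the contraction. You assert that the truncation renders the drift globally Lipschitz on $L^2(\Omega;X^{s,b})$ with constant $T^{\theta}R^{2k}$, but give no argument, and this is exactly the step the paper works hardest for. The cutoff $\kappa_R(t):=\theta_R\big(\|u\|_{X^{s,b}([0,t])}\big)$ is a nonsmooth time-dependent scalar, and multiplication by such a factor is \emph{not} automatically bounded on $X^{s,b}$ when $b>0$: the norm carries $b$ temporal derivatives, so one must control the increments $|\kappa_R(t)-\kappa_R(t')|$ against the kernel $|t-t'|^{-1-2b}$. This is trivial in the Strichartz-space setting of de Bouard--Debussche, where the cutoff multiplies norms with no time regularity, but not here; the paper isolates it as Lemma~\ref{cutoff-bdd}, proved via the characterization \eqref{sobolev-char}, the mean value theorem bound $|\kappa_R(t)-\kappa_R(t')|\les R^{-1}\|\one_{[t',t]}u\|_{X^{s,b}}$, an exchange of a fourfold time integral, and Hardy's inequality. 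Relatedly, the paper truncates with the power $\eta_R(\cdot)^{2k+1}$ in \eqref{SNLSm-R}, so that the truncated nonlinearity is exactly $|\kappa_R u|^{2k}(\kappa_R u)$ and the multilinear estimates (Lemmas~\ref{trilin} and~\ref{multilin}) apply directly to the truncated factor $\kappa_R u$, whose $X^{s,b}$-norm is at most $C_2(R)$ uniformly in $u$ by Lemma~\ref{cutoff-bdd}; this is what makes $\Lambda_R$ a contraction on the whole space with $T=T(R)$ \emph{independent of} $\|u_0\|_{H^s}$, hence the truncated equation is globally well-posed (Proposition~\ref{gwp-mult-R}) --- a fact your gluing step tacitly uses. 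With your single power of $\theta_R$ the cutoff cannot be distributed across the $2k+1$ factors, so the claimed uniform-in-$u$ bound does not follow from the multilinear estimate; you would instead need an analogue of Lemma~\ref{cutoff-bdd} for multiplying $|u|^{2k}u\in X^{s,b'-1}$, a negative-order space where the argument via \eqref{sobolev-char} is unavailable. Until the boundedness and Lipschitz continuity of $u\mapsto \kappa_R u$ on $X^{s,b}([0,T])$ are actually proved, the fixed-point argument is incomplete.
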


\begin{remark}
If $\phi\xi$  is a spatially homogeneous noise, i.e. $\phi$ is translation invariant, 
then the extra assumption \eqref{phiFLsr} is superfluous. 
Indeed, if $\widehat{ \phi e_n}(m) =0$, for all $m,n\in\Z^d$, $m\neq n$ and  $\phi\in\mathcal{L}^2(L^2(\T^d); H^s(\T^d))$, 
then $\phi\in \L^2(L^2(\T^d); \F L^{s,r}(\T^d))$ for any $r\ge 1$. 

We point out that an extra condition in the multiplicative case was also used by 
de~Bouard and Debussche \cite{debouard-debussche-nls} 
in their study of SNLS in $H^1(\R^d)$, 
namely they required that $\phi$ is 
a $\gamma$-radonifying operator from $L^2(\R^d)$ into $W^{1,\alpha}(\R^d)$ 
 for some appropriate $\alpha$, 
 as compared to the requirement that 
 $\phi$ is Hilbert-Schmidt from $L^2(\R^d)$ into $H^s(\R^d)$ in the additive case.   
\end{remark}

In the multiplicative case, the stochastic convolution depends on the solution 
$u$ and this forces us to work in the space in \eqref{L2Omega-space}. 
In order to control the nonlinearity in this space, 
we use a truncation method which has been used for SNLS on $\R^d$ 
by de Bouard and Debussche \cite{debouard-debussche-nls, debouard-debussche-nls-mult}.  
Moreover, we combine this method with the use of $X^{s,b}$-spaces in a similar manner as in
\cite{debouard-debussche-kdv}, where the same authors studied the stochastic KdV equation with low regularity initial data on $\R$. 
This introduces some technical difficulties which did not appear 
when using the more classical Strichartz spaces as those used in \cite{debouard-debussche-nls, debouard-debussche-nls-mult}.

Next, we prove global well-posedness of SNLS \eqref{SNLS} with multiplicative noise. 
Similarly to the additive case, the main ingredient is the probabilistic a priori bound on the mass and energy of a local solution $u$. However, 
we further need to obtain uniform control on the $X^{s,b}$-norms for solutions to  truncated versions of 
\eqref{SNLS-mild}. 

\begin{theorem}[Global well-posedness for multiplicative SNLS]\label{gwp:mult}
	Let $s\ge 0$.  Given $\phi$ with the same assumptions as in Theorem \ref{lwp:mult}, let $F(u,\phi)=u\cdot\phi\xi$ 
	and $u_0\in H^s(\T^d)$. Then the $H^s$-valued solutions of Theorem~\ref{lwp:mult} extend globally in time in the following cases:
	
	\vspace{.1cm}
	
	\noi
	\textup{ (i)} the (focusing or defocusing) one-dimensional cubic SNLS for all $s\ge 0$; 
	
	\noi
	\textup{\ (ii)} the defocusing energy-subcritical SNLS for all $s\ge 1$. 
\end{theorem}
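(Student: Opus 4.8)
The plan is to extend the local solutions from Theorem~\ref{lwp:mult} to global ones by establishing probabilistic a priori bounds on the mass $M(u(t))$ and, for the energy-subcritical case, the energy $E(u(t))$, mirroring the additive case (Theorem~\ref{gwp:add}) but with the extra complication that the stochastic convolution $\Psim[u]$ now depends on the solution itself. As in the statement preceding the theorem, the main difference from the additive case is that one must work with truncated versions of the mild equation \eqref{SNLS-mild} and obtain \emph{uniform} control of the $X^{s,b}([0,T])$-norms as the truncation parameter is removed. I would therefore first set up the standard blow-up alternative: the local solution extends to a maximal interval $[0,T^*)$, and global existence follows once we rule out finite-time blow-up of $\|u(t)\|_{H^s}$, which we do almost surely via the conserved-quantity bounds.

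First I would treat case (i), the one-dimensional cubic SNLS at regularity $s\ge 0$. The key step is to apply It\^o's lemma to the functional $M(u(t))=\tfrac12\|u(t)\|_{L^2_x(\T)}^2$ along solutions of the multiplicative SNLS. Because the It\^o multiplicative noise $u\cdot\phi\xi$ is being used, the It\^o correction produces a term controlled by $\|\phi\|_{\L^2(L^2;L^2)}^2$ together with a factor of $\|u(t)\|_{L^2_x}^2$, while the martingale part is a stochastic integral with expectation zero. This yields a closed differential inequality for $\EE[\|u(t)\|_{L^2_x}^2]$ of the form $\frac{d}{dt}\EE\|u\|_{L^2_x}^2 \les \|\phi\|_{\L^2}^2\,\EE\|u\|_{L^2_x}^2$, so that a Gr\"onwall argument gives an exponential-in-time bound. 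One then upgrades this moment bound to an almost-sure pathwise bound on $\|u(t)\|_{L^2}$ on any finite interval, using a stopping-time/localization argument (the truncation) to justify that the martingale term is a genuine martingale, and passing to the limit as the truncation is removed. Since $s=0$ is the scaling-subcritical endpoint here and the $L^2$-norm is exactly the quantity being controlled, this rules out blow-up in $H^0=L^2(\T)$; the case $s>0$ follows because the local theory's time of existence depends only on the $L^2$-norm at the cubic level on $\T$ (by persistence of regularity, the higher Sobolev norm cannot blow up while the $L^2$-norm stays finite), letting us iterate the local result on a fixed time grid.

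For case (ii), the defocusing energy-subcritical SNLS at $s\ge 1$, I would additionally apply It\^o's lemma to the energy $E(u(t))$ defined in \eqref{defn:energy}. Here the defocusing sign is essential: both terms in $E$ are then nonnegative, so a bound on $E(u(t))$ together with the mass bound controls $\|u(t)\|_{H^1}$. The computation produces a drift term and an It\^o correction coming from the second-order variation of $E$ against the noise $u\cdot\phi\xi$; after using the multiplicative structure one bounds these by a constant (depending on $\|\phi\|$ and on lower-order norms already controlled by the mass estimate) times $1+E(u(t))$, again closing a Gr\"onwall-type inequality for $\EE[E(u(t))]$ and then passing to an almost-sure bound. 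Combined with the mass bound from case (i), this gives an a priori control of $\EE[\|u(t)\|_{H^1}^2]$ on any finite interval, hence rules out blow-up in $H^1$ and gives global well-posedness for $s=1$; for $s>1$ one again invokes persistence of regularity, since the local existence time is controlled by the $H^1$-norm in the energy-subcritical range.

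The main obstacle I expect is making the It\^o-calculus estimates rigorous in the presence of the truncation and the low-regularity $X^{s,b}([0,T])$ framework. Unlike in \cite{debouard-debussche-nls, debouard-debussche-nls-mult}, where the solutions live in Strichartz spaces that embed into $C_tH^1_x$, here the local solution only lies in $X^{s,\frac12-\varepsilon}([0,T])$, which does \emph{not} embed into $C([0,T];H^s)$, so one cannot directly apply It\^o's formula to $E(u(t))$ for the rough $X^{s,b}$-solution. The remedy, as flagged in the discussion before the theorem, is to derive the a priori bounds for the truncated solutions $u_R$ (solving a truncated mild equation with a cutoff $\theta(\|u_R\|_{X^{s,b}([0,t])}/R)$), which are regular enough to justify It\^o's lemma, and then to obtain \emph{uniform-in-$R$} bounds on the $X^{s,b}$-norms so that one can remove the truncation and recover the a priori bound for the genuine solution. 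Controlling these $X^{s,b}$-norms uniformly, and ensuring the stopping times built from the truncation converge to the maximal existence time, is the technically delicate part of the argument.
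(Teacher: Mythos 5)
Your overall strategy does match the paper's (It\^o-based a priori bounds on mass and energy, truncation, uniform-in-$R$ control of $X^{s,b}$-norms, removal of the truncation), but the proposal has a genuine gap at exactly the point you defer: the uniform-in-$R$ bound $\EE\big[\|u_R\|_{X^{s,b}([0,T\wedge\tau_R])}\big]\le C$ (the paper's Lemma~\ref{finite-xsb}) is the crux, not a technicality to be flagged. Mass/energy bounds alone cannot close the argument, because the maximal time $\tau^*=\lim_{R\to\infty}\tau_R$ from Theorem~\ref{lwp:mult} is defined through stopping times built on the $X^{s,b}([0,t])$-norm, not the $H^s$-norm, so a ``blow-up alternative in $H^s$'' is not the operative dichotomy. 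The paper converts the moment bounds $\EE[\sup_{t\le T}M(u^N(t))^m]$ and $\EE[\sup_{t\le T}E(u^N(t))^m]$ into $X^{s,b}$ control by iterating the local fixed-point estimate on subintervals of length $\tau\sim K^{-2k/\delta}$ with $K\sim\|u_R\|_{C([0,T\wedge\tau_R];H^s)}+\|\Psi\|_{X^{s,b}([0,T\wedge\tau_R])}$, via a continuity argument on the roots of the polynomial $p_\tau(x)=C_2\tau^\delta x^{2k+1}-x+K$; summing over $\sim T/\tau$ subintervals gives $\|u_R\|_{X^{s,b}([0,T\wedge\tau_R])}\lesssim T\,K^{\frac{2k}{\delta}+1}$ with finite expectation, and then Markov's inequality yields $\P(\tau_R<T)=\P\big(\|u_R\|_{X^{s,b}([0,T\wedge\tau_R])}\ge R\big)\le R^{-1}\EE[\cdots]\to 0$, i.e. $\tau^*=\infty$ almost surely. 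None of this mechanism appears in your write-up. Note also that your fixed-time Gr\"onwall bound on $\EE\|u(t)\|_{L^2}^2$ (using only that the martingale term has mean zero) is too weak: the scheme needs $\sup_{t\le T}$ inside the expectation, which is obtained via the Burkholder--Davis--Gundy inequality on the martingale part, as in Proposition~\ref{apriori-est-M-m}.

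There is a second, concrete error: you assert that the truncated solutions $u_R$ ``are regular enough to justify It\^o's lemma.'' The $R$-truncation cuts off the nonlinearity according to $\|u_R\|_{X^{s,b}([0,t])}$ and does nothing to improve spatial regularity, so It\^o's formula for $M$ and $E$ is not justified on $u_R$ itself. The paper uses a second, distinct truncation for this purpose: the frequency-truncated equation \eqref{SNLS_N} with $P_{\le N}$, $\phi^N=P_{\le N}\circ\phi$, and $W^N$, for which the It\^o computations are legitimate; the resulting bounds are uniform in $N$ and survive the limit $N\to\infty$. Conflating these two truncations leaves your It\^o step unjustified. (Your persistence-of-regularity remarks for intermediate $s$ are a reasonable gloss, consistent with the paper running the scheme at the $L^2$ and $H^1$ levels.)
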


Before concluding this introduction let us state two remarks. 

\begin{remark} 
We point out that Theorem~\ref{lwp:add} and Theorem~\ref{lwp:mult} are almost optimal 
for handling the regularity of initial data 
since the deterministic NLS is ill-posed for $s<s_{\textup{crit}}$ (see Remark \ref{remark-critical}). 
In terms of the regularity of the noise, at least in the additive noise case, 
it is possible to consider rougher noise 
by employing the Da Prato-Debussche trick, 
namely by writing a solution $u$ to \eqref{SNLS-mild} as $u= v+\Psi$ 
and considering the equation for the residual part $v$. 
In general, this procedure allows one to treat rougher noise, 
see for example 
\cite{BOP1, BOP2, CollianderOh}. 
where they treat NLS with rough random initial data. 
In the periodic setting however, 
the argument gets more complicated 
(see for example \cite{BOP1, BOP2} on $\R^d$ versus \cite{CollianderOh,NahmodStaffilani} on $\T^d$). 
The actual implementation of the aforementioned trick requires 
cumbersome case-by-case analysis where the number of cases grows exponentially in $k$. 
Even for the cubic case on $\T^d$ the analysis is involved, whereas 
on $\R^d$ one can use bilinear Strichartz estimates which are not available on $\T^d$.
\end{remark}

\begin{remark}
In the multiplicative noise case, there are well-posedness results 
on a general compact Riemannian manifold $M$ without boundaries. 
In \cite{BrzezniakMillet}, 
Brze\'zniak and Milllet  use the Strichartz estimates of \cite{BGT} and the standard space-time Lebesgue spaces (i.e. 
without the Fourier restriction norm method). 
For $M=\T^d$, Theorem~\ref{lwp:mult} improves the result in \cite{BrzezniakMillet} 
since it requires less regularity on the noise and initial data.  
In \cite{brezniak-h-w}, 
Brze\'zniak, Hornung, and Weiss construct martingale solutions in $H^1(M)$ 
for the multiplicative SNLS 
with energy-subcritical defocusing nonlinearities 
and mass-subcritical focusing nonlinearities. 
\end{remark}

\subsection*{Organization of the paper}
In Section \ref{sect:Xsb}, we provide some preliminaries for the Fourier restriction norm method 
and prove the multilinear estimates necessary for the local well-posedness results. 
In Section~\ref{sect:stochests}, 
we prove some properties of the stochastic convolutions $\Psi$ and $\Psi[u]$ 
given respectively by \eqref{sc:Psia} and  \eqref{sc:Psim}. 
We prove  Theorems \ref{lwp:add} and \ref{lwp:mult} in Section~\ref{sect:LWP}. 
Finally, in Section \ref{sect:GWP} we prove the  global results Theorems \ref{gwp:add} and \ref{gwp:mult}. 

\subsection*{Notations} 
Given $A,B\in\R$, 
we use the notation $A\lesssim B$ to mean $A\le CB$ for some  constant $C\in (0,\infty)$ and write $A\sim B$ to mean $A\lesssim B$ and $B\lesssim A$. 
We sometimes emphasize any dependencies of the implicit constant as subscripts on $\lesssim$, $\gtrsim$, 
\linebreak 
and $\sim$; e.g. $A\lesssim_{p} B$ means $A\le C B$ for some constant $C=C(p)\in (0,\infty)$ that depends on the parameter $p$. We denote by $A\wedge B$ and $A\vee B$ the minimum and maximum between the two quantities respectively. Also, $\lceil A\rceil$ denotes the smallest integer greater or equal to $A$, while $\lfloor A\rfloor$ denotes the largest integer less than or equal to $A$. 

Given a function $g:U\to \C$, where $U$ is either $\T^d$ or $\R$, our convention of the Fourier transform of $g$ is given by
\[\ft{g}(\xi)=\int_U e^{2\pi i \xi\cdot x}g(x)\,dx\,,\]
where $\xi$ is either an element of $\Z^d$ (if $U=\T^d$) or an element of $\R$ (if $U=\R$). For the sake of convenience, we shall omit the $2\pi$ from our writing since it does not play any role in our arguments. 

For $c\in\R$, we sometimes write $c+$ to denote $c+\eps$ for sufficiently small $\eps>0$, and write $c-$ for the analogous meaning. For example, the statement `$u\in X^{s,\frac{1}{2}-}$' should be read as `$u\in X^{s,\frac12-\eps}$ for sufficiently small $\eps>0$'. 

For the sake of readability, in the proofs we sometimes omit the underlying domain $\T^d$ from various norms, e.g. 
we write $\|f\|_{H^s}$ instead of $\|f\|_{H^s(\T^d)}$ and 
$\|\phi\|_{\L^2(L^2;H^s)}$ instead of $\|\phi\|_{\L^2(L^2(\T^d);H^s(\T^d))}$.

\subsection*{Acknowledgements}
The authors would like to thank their advisors, Tadahiro Oh and Oana Pocovnicu,  
for suggesting this problem and their continuous support throughout this work, 
as well as Professor  Yoshio~Tsutsumi, 
Yuzhao~Wang and Dimitrios~Roxanas for several useful discussions on the present paper.  

The authors were supported by 
The Maxwell Institute Graduate School in Analysis and its Applications, 
a Centre for Doctoral Training funded by 
the UK Engineering and Physical Sciences Research Council (grant EP/L016508/01), 
the Scottish Funding Council, Heriot-Watt University and the University of Edinburgh.

\section{Fourier restriction norm method}
\label{sect:Xsb}
Let $s,b\in\R$. The Fourier restriction norm space $X^{s,b}$ adapted to the Schr\"odinger equation on $\T^d$ 
is the space of tempered distributions $u$ on $\R\times\T^d$ such that the norm
\begin{equation*}
\norm{u}_{X^{s,b}}
:=\norm{\jb{n}^s\jb{\tau-|n|^2}^b\F_{t,x}(u)(\tau,n)}_{\l^2_nL^2_\tau(\Z^d\times \R)}
\end{equation*}
is finite. Equivalently, the $X^{s,b}$-norm can be written in its interaction representation form:
\begin{equation}
	\norm{u}_{X^{s,b}}=\norm{\jb{n}^s\jb{\tau}^b\F_{t,x}\brac{S(-t)u(t)}(n,\tau)}_{\l^2_nL^2_\tau(\Z^d\times \R)}\,,\label{Xsb-interact-rep}
\end{equation}
where $S(t)=e^{-it\Dl}$ is the linear Schr\"odinger propagator. 
We now state some facts on $X^{s,b}$-spaces.  
The interested reader can find the proof of these and further properties in \cite{tao-book}.  
Firstly, we have the following continuous embeddings 
		\begin{align}
		X^{s,b} &\hookrightarrow C(\R; H_x^s(\T^d)) \ \mbox{, for } b>\frac{1}{2}\,,\\
		X^{s',b'}&\hookrightarrow X^{s,b}\  \mbox{, for } s'\ge s \mbox{ and } b'\ge b\,.
		\end{align}
We have the duality relation 
\begin{equation}
\label{Xsbduality}
\norm{u}_{X^{s,b}}=\sup_{\norm{v}_{X^{-s,-b}}\le 1}\left|\int_{\R\times\T^d} u(t,x) \overline{v(t,x)}\,dt\,dx\right|\,.
\end{equation}
\begin{lemma}[Transference principle, {\cite[Lemma~2.9]{tao-book}}]\label{trans-prin}
	\label{TransfPr}
	Let $Y$ be a Banach space of functions on $\R\times \T^d$ such that 
	\begin{equation*}
	\|e^{it\lambda} e^{\pm it\Delta} f\|_Y \les \|f\|_{H^s(\T^d)}
	\end{equation*}
	for all $\lambda\in\R$ and all $f\in H^s(\T^d)$. Then, for any $b>\frac12$, 
	\begin{equation*}
	\|u\|_Y \les \| u\|_{X^{s,b}}\, .
	\end{equation*}
\end{lemma}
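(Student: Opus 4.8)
The plan is to use the interaction representation \eqref{Xsb-interact-rep} of the $X^{s,b}$-norm to write $u$ as a continuous superposition of modulated free evolutions, to each of which the hypothesis on $Y$ applies directly. First I would set $g_\tau := \F_t[S(-t)u(t)](\tau)$, a function of $x$ for each frequency $\tau\in\R$, so that temporal Fourier inversion gives $S(-t)u(t) = \int_\R e^{it\tau} g_\tau\,d\tau$ (the precise sign of the modulation is immaterial below). Applying $S(t)$ to both sides and pulling it through the scalar $e^{it\tau}$ yields the representation
\[
u(t) = \int_\R e^{it\tau}\,S(t)g_\tau\,d\tau = \int_\R e^{it\tau}e^{-it\Delta}g_\tau\,d\tau\,,
\]
and the point is that each integrand $e^{it\tau}e^{-it\Delta}g_\tau$ is exactly of the form appearing in the hypothesis, with $\lambda=\tau$ and the minus sign in $e^{\pm it\Delta}$ (recall $S(t)=e^{-it\Delta}$).

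Next I would move the $Y$-norm inside the $\tau$-integral by Minkowski's integral inequality and apply the hypothesis one frequency slice at a time:
\[
\norm{u}_Y \le \int_\R \norm{e^{it\tau}e^{-it\Delta}g_\tau}_Y\,d\tau \les \int_\R \norm{g_\tau}_{H^s}\,d\tau\,.
\]
Since $\F_{t,x}(S(-t)u)(n,\tau)=\ft{g_\tau}(n)$, the right-hand side equals $\int_\R \norm{\jb{n}^s\ft{g_\tau}(n)}_{\l^2_n}\,d\tau$, and it remains to dominate this by the $X^{s,b}$-norm.

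Finally I would insert the weight $\jb{\tau}^b$ and apply Cauchy--Schwarz in $\tau$:
\[
\int_\R \norm{\jb{n}^s\ft{g_\tau}(n)}_{\l^2_n}\,d\tau \le \brac{\int_\R \jb{\tau}^{-2b}\,d\tau}^{\frac12}\norm{\jb{n}^s\jb{\tau}^b\F_{t,x}(S(-t)u)(n,\tau)}_{\l^2_n L^2_\tau}\,,
\]
where by \eqref{Xsb-interact-rep} the second factor is precisely $\norm{u}_{X^{s,b}}$. The main---and essentially the only---obstacle is the finiteness of the first factor: $\int_\R \jb{\tau}^{-2b}\,d\tau<\infty$ holds exactly when $2b>1$, which is why the hypothesis $b>\frac12$ is sharp and cannot be weakened. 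Everything else (the Fourier inversion, the use of Fubini to identify $\F_{t,x}(S(-t)u)$ with $\ft{g_\tau}$, and the fact that $g_\tau\in H^s$ for almost every $\tau$ once $\norm{u}_{X^{s,b}}<\infty$) is routine, so once the decomposition is set up correctly the estimate follows at once.
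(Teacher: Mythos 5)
Your proof is correct and is exactly the standard argument for the transference principle: the paper does not prove this lemma itself but cites \cite[Lemma~2.9]{tao-book}, and the proof there is precisely your decomposition of $u$ into modulated free evolutions via the interaction representation, followed by Minkowski's inequality, the hypothesis on $Y$ applied slice-by-slice, and Cauchy--Schwarz in $\tau$ using $\int_\R \jb{\tau}^{-2b}\,d\tau<\infty$ for $b>\frac12$. You also correctly identify where $b>\frac12$ enters and why it is sharp for this argument, so there is nothing to add.
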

Given a time interval $I\subseteq\R$, 
one defines the time restricted space $X^{s,b}(I)$ via the norm
\begin{equation}
\label{Xsblocal}
\xsbt{u}{s}{b}{I}:=\inf\left\{\|{\tilde u}\|_{X^{s,b}}: \tilde u|_{I} = u\right\}.
\end{equation}
We note that for $s\geq0$ and $0\le b<\frac{1}{2}$, 
we have 
\begin{align}
\label{Xsblocalsim}
\norm{u}_{X^{s,b}(I)}\sim\xsb{\one_{I}(t)u(t)}{s}{b}\,,
\end{align}
see for example \cite[Lemma 2.1]{debouard-debussche-kdv} for a proof (for $X^{s,b}$ spaces adapted to the KdV equation). 
\begin{lemma}[Linear estimates, {\cite[Proposition~2.12]{tao-book}}]
\label{lem:linests}
Let $s\in\R$ and suppose $\eta$ is smooth and compactly supported.   
Then, we have
\begin{align}
\|\eta(t) S(t)f\|_{X^{s,b}} &\les \|f\|_{H^s(\T^d)} \ \text{, for }b\in\R\,;\\
\left\| \eta(t) \int_0^t S(t-t') F(t')dt' \right\|_{X^{s,b}} &\les \|F\|_{X^{s,b-1}} \ \text{, for  }b>\frac12\,.
\end{align}
\end{lemma}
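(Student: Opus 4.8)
The plan is to exploit the interaction representation \eqref{Xsb-interact-rep} of the $X^{s,b}$-norm, which strips off the Schr\"odinger oscillations and reduces everything to a purely temporal problem. For the homogeneous estimate, note that $S(-t)\big[\eta(t)S(t)f\big]=\eta(t)f$, since $S(-t)S(t)=\mathrm{Id}$ and $\eta(t)$ is a scalar. Hence the space-time Fourier transform factorizes as $\F_{t,x}\big(S(-t)\eta(t)S(t)f\big)(n,\tau)=\ft\eta(\tau)\ft f(n)$, and \eqref{Xsb-interact-rep} gives
\[
\|\eta(t)S(t)f\|_{X^{s,b}}=\|\jb{\tau}^b\ft\eta(\tau)\|_{L^2_\tau}\,\|\jb{n}^s\ft f(n)\|_{\l^2_n}=C_{\eta,b}\,\|f\|_{H^s}.
\]
Because $\eta$ is smooth and compactly supported, $\ft\eta$ is Schwartz, so $\|\jb{\tau}^b\ft\eta\|_{L^2_\tau}<\infty$ for every $b\in\R$; this yields the first estimate with no restriction on $b$.

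For the Duhamel estimate I would first reduce to a one-dimensional (in time) inequality. Applying $S(-t)$ to the Duhamel term produces $\eta(t)\int_0^t S(-t')F(t')\,dt'$, whose $n$-th spatial Fourier coefficient is $\eta(t)\int_0^t G_n(t')\,dt'$ with $G_n(t'):=\F_x\big(S(-t')F(t')\big)(n)$. By \eqref{Xsb-interact-rep} one has $\|F\|_{X^{s,b-1}}^2=\sum_{n\in\Z^d}\jb{n}^{2s}\|\jb{\tau}^{b-1}\ft{G_n}\|_{L^2_\tau}^2$, while the square of the $X^{s,b}$-norm of the Duhamel term is the analogous $\l^2_n$-sum with $\jb{\tau}^b\F_t\big(\eta(t)\int_0^t G_n(t')\,dt'\big)$ in place of $\jb{\tau}^{b-1}\ft{G_n}$. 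Thus the spatial weight $\jb{n}^s$ and the $\l^2_n$-summation are inert, and it suffices to prove the scalar estimate
\[
\Big\|\eta(t)\int_0^t G(t')\,dt'\Big\|_{H^b_t(\R)}\les\|G\|_{H^{b-1}_t(\R)},\qquad b>\tfrac12,
\]
uniformly in $G$ (and, crucially, in $n$).

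To prove this scalar estimate I would expand the antiderivative in Fourier, $\int_0^t G(t')\,dt'=\int\ft G(\tau')\,\frac{1-e^{-it\tau'}}{i\tau'}\,d\tau'$, and split the $\tau'$-integral into $|\tau'|\le1$ and $|\tau'|>1$. On the high-frequency region I write $\frac{1-e^{-it\tau'}}{i\tau'}=\frac{1}{i\tau'}-\frac{e^{-it\tau'}}{i\tau'}$: the second term gives $\eta(t)H(t)$ where $\ft H=\ind_{\{|\tau'|>1\}}\ft G/(i\tau')$ already lies in $H^b_t$, and multiplication by the Schwartz function $\eta$ (a convolution with $\ft\eta$ in Fourier) preserves $H^b_t$; the first term contributes the constant multiple $c\,\eta(t)$ with $c=\int_{|\tau'|>1}\ft G(\tau')/(i\tau')\,d\tau'$. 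On the low-frequency region I would Taylor-expand $\frac{1-e^{-it\tau'}}{i\tau'}$ in powers of $t\tau'$, cancelling the $1/\tau'$ singularity, so that $\eta(t)\int_{|\tau'|\le1}(\cdots)\,d\tau'$ becomes a convergent series $\sum_{k\ge1}a_k\,t^k\eta(t)$ of Schwartz functions with coefficients $|a_k|\les\tfrac1{k!}\|G\|_{H^{b-1}}$ controlled by Cauchy-Schwarz on the bounded region. The main obstacle -- and the only place where the hypothesis $b>\tfrac12$ is essential -- is the convergence of the constant $c$: bounding $\int_{|\tau'|>1}|\ft G(\tau')|/|\tau'|\,d\tau'$ by $\|G\|_{H^{b-1}}$ via Cauchy-Schwarz requires $\int_{|\tau'|>1}|\tau'|^{-2}\jb{\tau'}^{-2(b-1)}\,d\tau'\sim\int_1^\infty\tau'^{-2b}\,d\tau'<\infty$, which holds precisely when $b>\tfrac12$.
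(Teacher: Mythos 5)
Your proposal is correct, and it is essentially the standard argument from the source the paper cites for this lemma (\cite[Proposition~2.12]{tao-book}) --- the paper itself gives no proof, only the citation. Your reduction via the interaction representation \eqref{Xsb-interact-rep} to a scalar temporal estimate, the high/low frequency splitting in which the constant term $c\,\eta(t)$ absorbs the $1/\tau'$ tail (the one place $b>\frac12$ is used, as you correctly flag) and the Taylor expansion cancels the low-frequency singularity, are exactly the steps of that proof; the only point left implicit is that $\big\| t^k\eta(t)\big\|_{H^b_t}$ grows at most geometrically in $k$ (a consequence of the compact support and smoothness of $\eta$), which the factor $\frac{1}{k!}$ in your coefficient bound comfortably dominates, so the series summation closes.
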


\noi
By localizing in time, we can gain a smallness factor, as per lemma below. 
		\begin{lemma}[Time localization property, {\cite[Lemma~2.11]{tao-book}}]\label{xsb-time-loc}
			Let $s\in\R$ and $-\frac{1}{2}<b'<b<\frac{1}{2}$. For any $T\in (0,1)$, we have
			\[\xsbt{f}{s}{b'}{[0,T]}\les_{b,b'} T^{b-b'}\xsbt{f}{s}{b}{[0,T]}\,.\]
		\end{lemma}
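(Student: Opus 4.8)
The plan is to reduce this inequality, which compares two time--restricted norms, to a single scalar (purely temporal) multiplier estimate, and then to extract the gain $T^{b-b'}$ from a Sobolev localization bound. Fix a cutoff $\eta\in C^\infty_c(\R)$ with $\eta\equiv1$ on $[0,1]$ and $\supp\eta\subset[-1,2]$, and set $\eta_T(t):=\eta(t/T)$, so that $\eta_T\equiv1$ on $[0,T]$. Given $f$, choose an extension $\tilde f$ with $\tilde f|_{[0,T]}=f$ and $\xsb{\tilde f}{s}{b}\le 2\,\xsbt{f}{s}{b}{[0,T]}$. Since $\eta_T\tilde f$ again restricts to $f$ on $[0,T]$, it is admissible in the infimum defining the left-hand side, so the lemma follows once we prove the global multiplier estimate $\xsb{\eta_T u}{s}{b'}\les_{b,b'} T^{b-b'}\xsb{u}{s}{b}$ for all $u$.

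Next I would remove the spatial variable. Using the interaction representation \eqref{Xsb-interact-rep} and the fact that the scalar factor $\eta_T(t)$ commutes with $S(-t)$, one has $S(-t)[\eta_T u](t)=\eta_T(t)\,w(t)$, where $w(t):=S(-t)u(t)$. Writing $w_n(t)$ for the $n$-th spatial Fourier coefficient of $w$ and $\|g\|_{H^\sigma_t}:=\|\jb{\tau}^\sigma\widehat{g}\|_{L^2_\tau(\R)}$, this gives
\[
\xsb{\eta_T u}{s}{b'}^2=\sum_{n\in\Z^d}\jb{n}^{2s}\|\eta_T w_n\|_{H^{b'}_t(\R)}^2,\qquad
\xsb{u}{s}{b}^2=\sum_{n\in\Z^d}\jb{n}^{2s}\|w_n\|_{H^{b}_t(\R)}^2.
\]
Hence the index $s$ and the lattice sum are completely inert, and everything reduces to the scalar bound $\|\eta_T g\|_{H^{b'}_t}\les_{b,b'}T^{b-b'}\|g\|_{H^b_t}$, uniformly in $g$ and in $T\in(0,1)$.

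The heart of the matter is the case $b'=0$, $0\le b<\tfrac12$: since $\supp\eta_T$ has length $\sim T$, Hölder together with the Sobolev embedding $H^b(\R)\hookrightarrow L^{2/(1-2b)}(\R)$ (valid precisely because $b<\tfrac12$) yields $\|\eta_T g\|_{L^2}^2\lesssim\int_{\supp\eta_T}|g|^2\lesssim T^{2b}\|g\|_{H^b}^2$. Via the symmetric bilinear pairing $(g,\psi)\mapsto\int\eta_T\,g\,\psi$ this already settles the range in which both the domain regularity $b$ and the dual-target regularity $-b'$ are non-negative. The remaining sign combinations follow by duality (to reach negative target index) together with complex interpolation against the uniform-in-$T$ boundedness $\|\eta_T g\|_{H^\sigma}\lesssim\|g\|_{H^\sigma}$ for $|\sigma|<\tfrac12$. \textbf{The main obstacle} is exactly this endpoint boundedness and the resulting restriction $-\tfrac12<b'<b<\tfrac12$: multiplication by the (essentially sharp) cutoff is bounded on $H^\sigma$ only for $|\sigma|<\tfrac12$, the same Sobolev threshold that produces the gain above. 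Keeping the power sharp (that is, $T^{b-b'}$ rather than a weaker power) forces one to track the weights carefully in the low-frequency regime $\jb{\tau}\lesssim T^{-1}$ instead of estimating crudely, and to organize the duality and interpolation so as to cover all sign patterns of $b'$ and $b$.
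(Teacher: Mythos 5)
Your argument is correct and is essentially the same as the proof the paper relies on: the paper gives no proof of its own, deferring to \cite[Lemma~2.11]{tao-book}, whose standard argument likewise reduces the claim -- via a near-optimal extension multiplied by $\eta(t/T)$ and the interaction representation \eqref{Xsb-interact-rep} -- to the one-dimensional estimate $\|\eta(t/T) g\|_{H^{b'}_t}\les T^{b-b'}\|g\|_{H^{b}_t}$, which is then handled exactly as you do, by H\"older together with $H^b(\R)\hookrightarrow L^{2/(1-2b)}(\R)$ at the endpoint $b'=0$, and by duality, the bilinear pairing, and interpolation for the remaining sign patterns of $(b,b')$. The one ingredient you quote without proof, the uniform-in-$T$ bound $\|\eta(t/T)g\|_{H^\sigma}\les\|g\|_{H^\sigma}$ for $|\sigma|<\frac12$, is indeed standard and not a gap: for $0<\sigma<\frac12$ it follows from the fractional Leibniz rule with the H\"older split $\frac12=\sigma+\frac{1-2\sigma}{2}$, since by scaling $\big\||\nabla|^\sigma\eta(\cdot/T)\big\|_{L^{1/\sigma}(\R)}=O(1)$ uniformly in $T\in(0,1)$, and the case $-\frac12<\sigma<0$ then follows by duality.
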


We now give the proofs of the multilinear estimates necessary to control the nonlinearity $|u|^{2k}u$. Recall the $L^4$-Strichartz estimate due to Bourgain \cite{bourgain-93-1} 
(see also \cite[Proposition~2.13]{tao-book}): 
\begin{equation}
\label{L4Strichartz}
\|u\|_{L^4_{t,x}(\R\times\T)} \lesssim \|u\|_{X^{0,\frac38}} .
\end{equation}

\begin{lemma}\label{trilin}
Let $d=1$,  $s\geq0$, $b\geq \frac38$, and  $b'\leq\frac58$. Then, for any time interval $I\subset\R$, 
we have 
\begin{equation}
\label{trilinStrich}
\left\| u_1 \overline{u_2} u_{3} \right\|_{X^{s,b'-1}(I)} 
\lesssim 
 \prod_{j=1}^{3} \|u_j\|_{X^{s,b}(I)} .
\end{equation}
\end{lemma}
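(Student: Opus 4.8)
The plan is to reduce the time-localized estimate \eqref{trilinStrich} to its global counterpart on $\R\times\T$, and then to prove the latter by duality together with the $L^4$-Strichartz estimate \eqref{L4Strichartz}. For the reduction, given extensions $\tilde u_j\in X^{s,b}$ of $u_j$ with $\norm{\tilde u_j}_{X^{s,b}}\le 2\norm{u_j}_{X^{s,b}(I)}$, the product $\tilde u_1\overline{\tilde u_2}\tilde u_3$ restricts on $I$ to $u_1\overline{u_2}u_3$ and hence furnishes an admissible competitor in the infimum \eqref{Xsblocal} defining $\norm{u_1\overline{u_2}u_3}_{X^{s,b'-1}(I)}$. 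Thus it suffices to establish
\begin{equation*}
\norm{u_1\overline{u_2}u_3}_{X^{s,b'-1}} \lesssim \prod_{j=1}^3\norm{u_j}_{X^{s,b}}.
\end{equation*}

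Since $b'\le\frac58$ gives $1-b'\ge\frac38$, I would invoke the duality relation \eqref{Xsbduality} to write
\begin{equation*}
\norm{u_1\overline{u_2}u_3}_{X^{s,b'-1}} = \sup\Big\{\Big|\int_{\R\times\T} u_1\overline{u_2}u_3\,\overline{v}\,dt\,dx\Big| : \norm{v}_{X^{-s,1-b'}}\le 1\Big\}.
\end{equation*}
Writing $v=\jb{\nabla}^s w$, one has $\norm{w}_{X^{0,1-b'}}=\norm{v}_{X^{-s,1-b'}}\le 1$, and moving the self-adjoint operator $\jb{\nabla}^s$ onto the product turns the output spatial frequency weight into $\jb{n}^s$ with $n=n_1-n_2+n_3$. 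The elementary bound $\jb{n}^s\lesssim \jb{n_1}^s+\jb{n_2}^s+\jb{n_3}^s$, valid since $s\ge0$, then splits the pairing into three terms, in the $j$-th of which $\jb{\nabla}^s$ is placed on the factor $u_j$ (the highest-frequency input).

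It then remains to bound each such term, e.g.
\begin{equation*}
\Big|\int_{\R\times\T}(\jb{\nabla}^s u_1)\,\overline{u_2}\,u_3\,\overline{w}\,dt\,dx\Big| \le \norm{\jb{\nabla}^s u_1}_{L^4_{t,x}}\norm{u_2}_{L^4_{t,x}}\norm{u_3}_{L^4_{t,x}}\norm{w}_{L^4_{t,x}},
\end{equation*}
by H\"older's inequality with four $L^4_{t,x}$ factors. Applying \eqref{L4Strichartz} to each factor and using $b\ge\frac38$, $s\ge0$, and $1-b'\ge\frac38$ (so that $\norm{u_j}_{X^{0,3/8}}\le\norm{u_j}_{X^{s,b}}$ and $\norm{w}_{X^{0,3/8}}\le\norm{w}_{X^{0,1-b'}}\le1$) yields the desired bound $\prod_{j}\norm{u_j}_{X^{s,b}}$. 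The remaining two terms are handled identically.

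I do not expect a serious obstacle here; rather, the point to verify is that the numerology is exactly consistent. The $L^4$-Strichartz estimate \eqref{L4Strichartz} costs $\frac38$ temporal derivatives on each of the four functions entering the quadrilinear pairing, so one needs all four of them to sit in $X^{0,\frac38}$: the three solution factors do so precisely because $b\ge\frac38$, while the dual factor does so precisely because $b'\le\frac58$, i.e. $1-b'\ge\frac38$. The spatial derivatives $\jb{\nabla}^s$ are harmless, since $s\ge0$ permits them to be absorbed onto the highest-frequency input via $\jb{n}^s\lesssim\sum_j\jb{n_j}^s$.
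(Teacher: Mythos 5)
Your proposal is correct and takes essentially the same route as the paper's proof: duality via \eqref{Xsbduality}, H\"older's inequality with four $L^4_{t,x}$ factors, the $L^4$-Strichartz estimate \eqref{L4Strichartz} under the numerology $b,\,1-b'\ge\frac38$, and the extension/restriction argument (product of extensions extends the product) for general $I$, with the triangle inequality $\jb{n}^s\lesssim\jb{n_1}^s+\jb{n_2}^s+\jb{n_3}^s$ handling $s\ge0$. The only cosmetic differences are the ordering of steps (the paper reduces to $s=0$ first, then dualizes) and your parenthetical ``highest-frequency input,'' which is unneeded since the splitting places $\jb{\nabla}^s$ on each factor in turn regardless of frequency ordering.
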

\begin{proof}
By the triangle inequality it suffices to prove \eqref{trilinStrich} for $s=0$. 
We claim that
\begin{equation*}
\left| \int_{\R\times\T^d} u_1 \overline{u_2} u_3 \overline{v}\,dxdt \right| 
\lesssim \prod_{j=1}^{3} \|u_j\|_{X^{0,b}} \|v\|_{X^{0,1-b'}}
\end{equation*}
for any factors $u_1, u_2, u_3, v$. Indeed, this follows immediately from H\"{o}lder inequality 
and \eqref{L4Strichartz} for each of the four factors (hence the restrictions $b,1-b'~\geq~\frac38$). 
Thus,  the global-in-time version of \eqref{trilinStrich}, i.e. $I=\R$, 
follows by the duality relation \eqref{Xsbduality}. 
For an arbitrary time interval $I$, if $\tilde{u}_j$ is an extension of $u_j$, $j=1,2,3$, then 
$\tilde{u}_1 \overline{\tilde{u}_2} \tilde{u}_3$ is an extension of $u_1 \overline{u_2} u_3$. 
We use the previous step to get
$$ \left\| u_1 \overline{u_2} u_{3} \right\|_{X^{s,b'-1}(I)} \le 
\left\| \tilde{u}_1 \overline{\tilde{u}_2} \tilde{u}_{3} \right\|_{X^{s,b'-1}} \les 
\prod_{j=1}^{3} \|\tilde{u}_j\|_{X^{0,b}} $$
and then we take infimum over all extensions $\tilde{u}_j$'s 
and \eqref{trilinStrich} follows. 
\end{proof}

Due to the scaling and  Galilean symmetries of the linear Schr\"{o}dinger equation, 
the periodic  Strichartz estimate \eqref{perStrich} of 
Bourgain and Demeter \cite{bourgain-demeter} 
is equivalent with 
\begin{equation}
\label{StrichartzKV}
\|S(t) P_Q  f\|_{L^p_{t,x}(I\times\T^d)} \lesssim_{|I|} |Q|^{\frac{1}{2} - \frac{d+2}{pd}+} \|f\|_{L^2_x(\T^d)}, 
\end{equation}
for any $d\geq 1$, $p\ge\frac{2(d+2)}{d}$, $I\subset\R$ finite time interval, and $Q\subset\R^d$ dyadic cube. 
Here, $P_Q$ denotes the frequency projection onto $Q$, i.e. $\widehat{P_Qf}(n) = \ind_Q(n) \widehat{f}(n)$. 
By the transference principle (Lemma~\ref{TransfPr}), we get 
\begin{equation}
\label{StrichartzKVtp}
\|P_{Q}u\|_{L^{p}_{t,x}(I \times\T^d)} \lesssim_{|I|} |Q|^{\frac{1}{2} - \frac{d+2}{pd} +} \| u\|_{X^{0, b}(I)} ,
\end{equation}
for any $b>\frac12$. 
By interpolating \eqref{StrichartzKVtp} 
with
\begin{equation}
\|P_{Q}u\|_{L^{p}_{t,x}(I \times\T^d)} \lesssim |Q|^{\frac12-\frac1p} \| u\|_{X^{0, \frac12-\frac1p}(I)} ,
\end{equation}
(which follows immediately from Sobolev inequalities, \eqref{Xsb-interact-rep},  
and the $H^s(\T^d)$-isometry of $S(-t)$), 
we can lower the time regularity from $b=\frac12+\delta$ to $\tilde{b}=\frac12-\delta$, 
for sufficiently small~$\delta>0$. 
Thus, we also have
\begin{equation}
\label{StrichartzKVtpinterp}
\|P_{Q}u\|_{L^{p}_{t,x}(I\times\T^d)} \lesssim_{|I|,\delta}  
 |Q|^{\frac{1}{2} - \frac{d+2}{pd} + o(\delta)} 
  \|u\|_{X^{0,\frac12-\delta}(I)} 
\end{equation}

Lemma \ref{trilin} only treats the cubic nonlinearity when $d=1$. 
We now prove the following 
general multilinear estimates to treat other cases. 
The proof borrows techniques from \cite{guo-oh-wang}.

\begin{lemma}\label{multilin}
Let $d,k\geq 1$ such that $dk\geq 2$ and let $I\subset\R$ be a finite time interval.  
Then for any $s>s_{\textup{c}}$,  
there exist $b=\frac12-$  and $b'=\frac12+$ such that 
\begin{equation}
\label{multilinStrich}
\left\| u_1 \overline{u_2} \cdots \overline{u_{2k}} u_{2k+1} \right\|_{X^{s,b'-1}(I)} 
\lesssim_{|I|}
 \prod_{j=1}^{2k+1} \|u_j\|_{X^{s,b}(I)} .
\end{equation}
\end{lemma}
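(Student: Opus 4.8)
The plan is to reduce the multilinear estimate to a frequency-localized bound and then sum over dyadic scales. First I would use the $X^{s,b'-1}(I)$ duality characterization \eqref{Xsbduality} together with \eqref{Xsblocalsim} to rewrite \eqref{multilinStrich} as a bound on the $(2k+2)$-linear form
\begin{equation*}
\left| \int_{I\times\T^d} u_1 \overline{u_2}\cdots \overline{u_{2k}} u_{2k+1} \overline{v}\, dx\, dt\right| \lesssim \prod_{j=1}^{2k+1}\|u_j\|_{X^{s,b}(I)} \cdot \|v\|_{X^{-s,1-b'}(I)},
\end{equation*}
where $b=\frac12-$ and $1-b'=\frac12-$. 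I would then dyadically decompose each factor, writing $u_j = \sum_{N_j} P_{N_j} u_j$ and $v = \sum_{M} P_M v$, where the $N_j, M$ range over dyadic frequency cubes (or dyadic shells). The aim is to estimate the contribution of a single frequency interaction and then sum.

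The key mechanism is a generalized H\"older argument combined with the Bourgain--Demeter Strichartz estimate \eqref{StrichartzKVtpinterp}. For a fixed dyadic interaction, I would apply H\"older in $t,x$ distributing the $2k+2$ factors into $L^p_{t,x}$ norms with $p = 2k+2$ (so that the exponents sum to one), and then control each $\|P_{N_j} u_j\|_{L^{2k+2}_{t,x}}$ by \eqref{StrichartzKVtpinterp} with the choice $p = 2k+2$. This is admissible precisely when $2k+2 \geq \frac{2(d+2)}{d}$, i.e. $dk \geq 2$, which is exactly the hypothesis; this is why the borderline cubic case $(d,k)=(1,1)$ must be handled separately by Lemma~\ref{trilin}. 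Each application produces a factor $N_j^{\frac{d}{2}-\frac{d+2}{2k+2}+o(\delta)}$, and the analogous bound holds for $v$ on $M$.

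After summing the H\"older exponents, the frequency-localized estimate will carry a total power of the frequencies that I must reconcile with the available weights $\jb{n}^s$ on the $u_j$ and $\jb{n}^{-s}$ on $v$. The essential point is that, without loss of generality by symmetry, the largest input frequency (say $N_1$) is comparable to the output frequency $M$ (since the frequencies must nearly add to zero modulo the convolution constraint), so the $\jb{M}^{-s}$ weight on $v$ can absorb one power $\jb{N_1}^{s}$ via $M \sim N_1$. The remaining $2k$ factors each carry a derivative loss of size $N_j^{\frac{d}{2}-\frac{d+2}{2k+2}}$ which, for the smaller frequencies, must be summed against $\jb{N_j}^{s}$. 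The crucial computation is to verify that $s > s_{\textup{crit}} = \frac{d}{2}-\frac{1}{k}$ makes the net exponent of each summed-over frequency strictly negative (after choosing $\delta$ and the $\pm$ in $b, b'$ small enough), yielding a geometric series that converges. The main obstacle I expect is precisely this bookkeeping: organizing the frequencies so that one high frequency is paired with the dual output weight while the loss on the remaining factors is strictly summable, and confirming the scaling-critical threshold $s_{\textup{crit}}$ emerges exactly as the break-even point. This is where borrowing the frequency-arrangement techniques from \cite{guo-oh-wang} should streamline the argument.
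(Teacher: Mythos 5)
Your frame---dualize via \eqref{Xsbduality}, Littlewood--Paley decompose every factor, apply H\"older plus the Bourgain--Demeter bound \eqref{StrichartzKVtpinterp}, then sum dyadically---is the same as the paper's, and your observation that the uniform exponent $2k+2$ is Strichartz-admissible exactly when $dk\ge 2$ (explaining why $(d,k)=(1,1)$ needs Lemma~\ref{trilin}) is correct. But there are two genuine gaps. First, your ``without loss of generality $M\sim N_1$'' is false: with $2k+2\ge 4$ factors, two comparable high frequencies $N_1\sim N_2$ can nearly cancel and produce an output frequency $M\ll N_1$; the convolution constraint only gives $M\lesssim N_1$, with $M\sim N_1$ forced only when $N_1\gg N_2$. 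In the high-high case one can instead split the dual weight over $N_1\sim N_2$, but then your uniform H\"older choice does not close for all $s>s_{\textup{crit}}$: each factor placed in $L^{2k+2}_{t,x}$ loses $c_0:=\frac d2-\frac{d+2}{2k+2}$ derivatives, \emph{including} $v$, whose $\jb{M}^{-s}$ weight converts this into a loss $M^{c_0+s}$. Summing $M\lesssim N_1$ and using $N_2\sim N_1$ leaves a net power $N_1^{3c_0-s}$, and $3c_0>s_{\textup{crit}}$ in general: for $(d,k)=(3,1)$ one has $c_0=\frac14$ and $s_{\textup{crit}}=\frac12$, so the $N_1$-sum diverges for every $s\in(\frac12,\frac34)$. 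The paper avoids this with an \emph{asymmetric} H\"older: $v$ is placed in the scaling-critical $L^r$, $r=\frac{2(d+2)}{d}$, where the loss is only $N^{0+}$, the two high factors go in $L^q$, $q=\frac{4k(d+2)}{dk+2}$, losing $\frac{s_{\textup{crit}}}{2}$ each, and the remaining $2k-1$ factors in $L^p$, $p=k(d+2)$, losing $s_{\textup{crit}}$ each.

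Second, and more seriously, in the high-low case $N_1\gg N_2$ (where indeed $M\sim N_1$) your count yields $N_1^{2c_0}$, and even with the optimal exponents above the best available bound is $N_1^{0+}$---logarithmically divergent in the $N_1$-sum, as the paper notes explicitly before its ``slightly refined analysis.'' The essential idea missing from your proposal is that refinement: cover the annuli $\{|\xi_1|\sim N_1\}$ and $\{|\xi|\sim N\}$ by cubes $Q_\l$, $R_j$ of side length $N_2$; the convolution constraint pairs the cubes, $j\approx \l$, and since \eqref{StrichartzKVtp} holds on \emph{arbitrary} cubes with loss $|Q|^{0+}=N_2^{0+}$ rather than $N_1^{0+}$, a Cauchy--Schwarz in $(\l,j)$ together with Plancherel resums $\sum_\l\|u_{1,\l}\|_{X^{s,b}}^2$ and $\sum_j\|v_j\|_{X^{0,1-b'}}^2$ back to the full norms, leaving a total loss $N_2^{0+}$ that the factor $N_2^{-\varepsilon+}$ from the lower-frequency inputs absorbs. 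This cube decomposition and almost-orthogonality step is the actual technique the paper imports from \cite{guo-oh-wang}; you cite the reference but your scheme does not contain the mechanism, and without it the dyadic sum fails for every $s>s_{\textup{crit}}$, not merely near the critical regularity.
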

\begin{proof}
In view of \eqref{Xsblocalsim}, 
we can assume that $u_j(t)=\one_I(t) u_j(t)$
and thus by the duality relation \eqref{Xsbduality},  
it suffices to show 
\begin{equation}
\label{multilin-dual}
\left|
\int_{\R\times \T^{d}} \big(\jb{\nabla}^s (u_1 \overline{u_2} \cdots u_{2k+1})\big) \overline{v} \,dxdt \right|
   \lesssim \|v\|_{X^{0,1-b'}}  \prod _{j=1}^{2k+1}\|u_j\|_{X^{s,b}} . 
\end{equation}
We use Littlewood-Paley decomposition: 
we estimate the left-hand side of \eqref{multilin-dual} 
when $v=P_Nv$, $u_j=P_{N_j}u_j$ 
for some dyadic numbers $N, N_j\in 2^{\Z}$, $1\le j\le 2k+1$.  
Then the claim follows by triangle inequality and performing the summation
\begin{equation}
\label{summation}
\sum_{N_1} \sum_{\substack{N\\N\lesssim N_1}}\sum_{\substack{N_2\\N_2\leq N_1}} \cdots 
   \sum_{\substack{N_{2k+1}\\N_{2k+1}\leq N_{2k}}} .
\end{equation}
Notice that 
without loss of generality, we may assume that 
$N_1\geq N_2\geq\ldots\geq N_{2k+1}$, 
in which case we also have $N\lesssim N_1$, and that 
 the factors $v$ and $u_j$ are real-valued and non-negative. 

Let $\varepsilon :=s-s_{\textup{c}}$, and we distinguish two cases. 

\noindent 
\textbf{Case 1:} $N_1\sim N_2$. 
By H\"{o}lder inequality, 
\begin{equation}
\label{multilin-dual-LP}
N^s \int_{\R\times \T^{d}}  u_1 {u_2} \cdots u_{2k+1} {v} \,dxdt
\lesssim
N_1^{\frac{s}{2}} \|u_1\|_{L^q_{t,x}} N_2^{\frac{s}{2}} \|u_2\|_{L^q_{t,x}} \prod_{j=3}^{2k+1} \|u_j\|_{L^p_{t,x}}  \|v\|_{L^r_{t,x}} , 
\end{equation}
with $p,q,r$ chosen such that $\frac{2k-1}{p}+\frac{2}{q} +\frac1r=1$. 
We take $p, q$ 
such that $\frac{d}{2}-\frac{d+2}{p}=s_{\textup{crit}}$ and $\frac{d}{2}-\frac{d+2}{q}=\frac12 s_{\textup{crit}}$, 
or equivalently $p=k(d+2)$ and $q=\frac{4k(d+2)}{dk+2}$. 
These give the H\"{o}lder  exponent $r= \frac{2(d+2)}{d}$. 
By \eqref{StrichartzKVtpinterp} and \eqref{StrichartzKVtp}, we get 
\begin{align}
\label{multilin-eqn1-case1}
 N_j^{\frac{s}{2}} \|u_j\|_{L^q_{t,x}} &\lesssim N_j^{-\frac{\varepsilon}{2}+} \|u_j\|_{X^{s,b}} , \quad j=1,2\\
\label{multilin-eqn2-case1}
\|u_j\|_{L^p_{t,x}} &\lesssim N_j^{-\varepsilon+} \|u_j\|_{X^{s,b}},\quad 3\leq j\leq 2k+1, \\
\label{multilin-eqn3-case1}
\|v\|_{L^r_{t,x}} &\lesssim N^{0+} \|v\|_{X^{0,1-b'}} .
\end{align}
By choosing $\delta, \delta' \ll \varepsilon$ in $b:=\frac12-\delta$ and in $1-b'=\frac12-\delta'$, respectively, 
we get 
\begin{equation}
\textup{RHS of }\eqref{multilin-dual-LP} 
 \lesssim N^{-\frac{\varepsilon}{4}}\|v\|_{X^{0,1-b'}} \prod_{j=1}^{2k+1} N_j^{-\frac{\varepsilon}{4}} \|u_j\|_{X^{s,b}}  .
\end{equation}
The factors $N^{-\frac{\varepsilon}{4}}$, $N_j^{-\frac{\varepsilon}{4}}$ guarantee that we can perform \eqref{summation}.

\noindent 
\textbf{Case 2:} $N_1\gg N_2$. Then, we necessarily have $N_1\sim N$ or else the left hand side of \eqref{multilin-dual} vanishes. 
By H\"{o}lder inequality, 
\begin{equation}
N^s \int_{\R\times \T^{d}}  u_1 {u_2} \cdots u_{2k+1} {v} \,dxdt 
\les 
N_1^s \|u_1\|_{L^q_{t,x}} \prod_{j=2}^{2k+1} \|u_j\|_{L^p_{t,x}}  \|v\|_{L^r_{t,x}} , 
\end{equation}
with $\frac{2k}{p}+\frac{1}{q}+\frac1r=1$. 
As in Case 1, we would like to have $p$ such that 
$\frac{d}{2}-\frac{d+2}{p}= s_{\textup{crit}}$,
or equivalently $p=k(d+2)$. 
However, the best we can do with the Strichartz estimate for the remaining factors 
is to choose $q=r=\frac{2(d+2)}{d}$, 
so that we have
\begin{align}
\label{multilin-eqn1}
 N_1^s \|u_1\|_{L^q_{t,x}} &\lesssim N_1^{0+} \|u_1\|_{X^{s,b}} ,\\
\label{multilin-eqn2}
\|u_j\|_{L^p_{t,x}} &\lesssim N_j^{-\varepsilon+} \|u_j\|_{X^{s,b}},\quad 2\leq j\leq 2k+1, \\
\label{multilin-eqn3}
\|v\|_{L^r_{t,x}} &\lesssim N_1^{0+} \|v\|_{X^{0,1-b'}} .
\end{align}
Notice that we can overcome the loss of derivative $N_1^s$ 
only up to a logarithmic factor. 
We need a slightly refined analysis. 

We cover the dyadic frequency annuli of $u_1$ and of $v$ with dyadic cubes  of side-length $N_2$, 
i.e. 
$$\{\xi_1 : |\xi_1|\sim N_1\}\subset \bigcup_{\ell} Q_{\ell} \quad, \quad 
  \{\xi : |\xi|\sim N\}\subset \bigcup_{j} R_j \,.$$
There are approximately $\left(\frac{N_1}{N_2}\right)^d$-many cubes needed, and so 
$$u_1=\sum_{\ell} P_{Q_{\ell}} u_1 =: \sum_{\ell} u_{1,\ell}\quad, \quad 
  v= \sum_j P_{R_j}v =: \sum_j v_j$$
are decompositions into finitely many terms. 
Since 
$|\xi_1-\xi| \lesssim N_2$ for $\xi_1\in \supp(\widehat{u_1}), \xi\in \supp(\widehat{v})$ 
on the convolution hyperplane, 
there exists a constant $K$ such that 
if $\mathrm{dist}(Q_{\ell}, Q_j)>KN_2$, then  the integral in \eqref{multilin-dual} vanishes. 
Hence the summation \eqref{summation} is replaced by
\begin{equation}
\label{summation2}
\sum_{N_1} \sum_{\substack{N_2\\N_2\ll N_1}} \cdots 
   \sum_{\substack{N_{2k+1}\\N_{2k+1}\leq N_{2k}}} \sum_{\substack{\ell,j\\j\approx \ell}} .
\end{equation} 
Also, in place of \eqref{multilin-eqn1}-\eqref{multilin-eqn2}, we now have 
\begin{align}
\label{multilin-eqn11}
 N_1^s \|u_{1,\ell}\|_{L^q_{t,x}} &\lesssim N_2^{0+} \|u_{1,\ell}\|_{X^{s,b}} ,\\
 \label{multilin-eqn22}
\|u_i\|_{L^p_{t,x}} &\lesssim N_i^{-\varepsilon+} \|u_i\|_{X^{s,b}},\quad 2\leq i\leq 2k+1, \\
\label{multilin-eqn33}
\|v_j\|_{L^q_{t,x}} &\lesssim N_2^{0+} \|v_j\|_{X^{0,1-b'}} ,
\end{align}
Therefore, 
by Cauchy-Schwartz inequality and Plancherel identity, 
\begin{align*}
\textup{LHS of }\eqref{multilin-dual} &\lesssim 
   \sum_{N_2} \sum_{\substack{N_1\\N_1\gg N_2}} \sum_{\substack{\ell,j\\ \ell\approx j}} 
     N_2^{-\varepsilon+} \|u_{1,\ell}\|_{X^{s,b}} \|v_j\|_{X^{0,1-b'}}  \prod_{i=2}^{2k+1} \|u_i\|_{X^{s,b}}\\
    &\lesssim \sum_{N_2} N_2^{-\varepsilon+} 
      \left( \sum_{\substack{N_1\\N_1\gg N_2}} \sum_{\ell} \|u_{1,\ell}\|_{X^{s,b}}^2\right)^{\frac12} 
      \left( \sum_{\substack{N\\N\gg N_2}} 
       \sum_{j} \|v_j\|_{X^{0,1-b'}}^2 \right)^{\frac12}  \prod_{i=2}^{2k+1} \|u_i\|_{X^{s,b}}\\
     &\lesssim \sum_{N_2} N_2^{-\varepsilon+} \|u_1\|_{X^{s,b}} \|v\|_{X^{0,1-b'}} 
      \prod_{i=2}^{2k+1} \|u_i\|_{X^{s,b}}\\
     &\lesssim \prod_{i=1}^{2k+1} \|u_i\|_{X^{s,b}} \|v\|_{X^{0,1-b'}}
\end{align*}
and the proof is complete. 
\end{proof}

\section{The stochastic convolution}
\label{sect:stochests}
In this section, we prove some $X^{s,b}$-estimates on the stochastic convolution $\Psi(t)$ given 
either by \eqref{sc:Psia} or \eqref{sc:Psim}.
We first record the following Burkholder-Davis-Gundy inequality, which is a consequence of \cite[Theorem 1.1]{CR_Burkholder}.
\begin{lemma}[Burkholder-Davis-Gundy inequality]
\label{BDG}Let $H,K$ be separable Hilbert spaces, $T>0$, and $W$ is an $H$-valued Wiener process on $[0,T]$. 
Suppose that $\{\psi(t)\}_{t\in[0,T]}$ is an adapted process taking values in 
$\mathcal{L}^2(H;K)$. Then for $p\ge 1$,
\[\EE\left[\supT\norm{\int_0^t\psi(t')\,dW(t')}_K^p\right]\les_p 
  \EE\left[\left(\int_0^T \norm{\psi(t')}_{\mathcal{L}^2(H;K)}^2\,dt'\right)^\frac{p}{2}\right]\,.\]
\end{lemma}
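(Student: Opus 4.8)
The plan is to realize $M(t):=\int_0^t \psi(t')\,dW(t')$ as a $K$-valued continuous martingale and to reduce the claim to the abstract maximal Burkholder--Davis--Gundy inequality for Hilbert-space-valued martingales, which is the content of \cite[Theorem~1.1]{CR_Burkholder}. First I would note that, since $\psi$ is adapted with values in the Hilbert--Schmidt class $\mathcal{L}^2(H;K)$, the integral $M$ is well defined as a continuous local martingale; if necessary one localizes along the stopping times $\tau_n:=\inf\big\{t\ge 0: \int_0^t \norm{\psi(t')}_{\mathcal{L}^2(H;K)}^2\,dt'\ge n\big\}$. Granting this, the abstract inequality supplies
\[
\EE\Big[\supT \norm{M(t)}_K^p\Big] \les_p \EE\big[[M]_T^{p/2}\big],
\]
where $[M]$ denotes the (scalar) quadratic variation of $M$, i.e. the trace of its operator-valued bracket.

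The second step is to identify $[M]_T$ with the right-hand side of the assertion. Fixing an orthonormal basis $\{h_n\}$ of $H$ and writing $W=\sum_n \beta_n h_n$ for independent Brownian motions $\beta_n$, one has $M(t)=\sum_n \int_0^t \psi(t')h_n\,d\beta_n(t')$; expanding $\norm{M(t)}_K^2$, the independence and orthogonality of the increments $d\beta_n$ kill the cross terms and leave the pathwise identity
\[
[M]_t = \sum_n \int_0^t \norm{\psi(t')h_n}_K^2\,dt' = \int_0^t \norm{\psi(t')}_{\mathcal{L}^2(H;K)}^2\,dt'.
\]
Substituting $t=T$ into the abstract inequality then yields exactly the stated bound. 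This identity is nothing but the It\^o isometry recorded at the level of the quadratic variation.

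The only point requiring genuine care -- and hence the main obstacle -- is matching the hypotheses of \cite[Theorem~1.1]{CR_Burkholder} and handling the possibility that the right-hand side is infinite. If the cited theorem is phrased for martingales rather than local martingales, I would first apply it to the stopped processes $M^{\tau_n}$, whose quadratic variation is bounded by $n$, and then pass to the limit $n\to\infty$ via monotone convergence on the right and Fatou's lemma on the left; when $\EE\big[\big(\int_0^T\norm{\psi(t')}_{\mathcal{L}^2(H;K)}^2\,dt'\big)^{p/2}\big]=\infty$ the estimate is vacuous. A secondary subtlety is the range $1\le p<2$, where the Hilbert-space-valued inequality is more delicate than the $L^2$-isometry case, but this is already built into the statement of the cited theorem, so invoking it requires no further argument.
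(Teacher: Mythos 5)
Your proposal is correct and matches the paper's approach: the paper gives no written proof but derives the lemma directly from \cite[Theorem~1.1]{CR_Burkholder}, which is precisely the abstract maximal inequality you invoke, and your identification of the quadratic variation $[M]_t=\int_0^t\norm{\psi(t')}_{\mathcal{L}^2(H;K)}^2\,dt'$ together with the localization/Fatou step is the standard (and correct) way to fill in that reduction. The only cosmetic caveat is that in the paper's setting $W$ is a cylindrical Wiener process, so the expansion $W=\sum_n\beta_n h_n$ converges only in a larger space, but this does not affect the argument since $\psi$ is Hilbert--Schmidt.
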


In addition, we prove that $\Psi(t)$ is pathwise continuous in both cases. 
To this end,  we employ the factorization method of Da Prato \cite[Lemma 2.7]{daprato-kol-eqns-04}, 
i.e. we make use of the following lemma and \eqref{factorisation} below. 

\begin{lemma}
\label{fact-meth}
Let $H$ be a Hilbert space, $T>0$, $\alpha\in(0,1)$,  and $\sigma>\big(\frac{1}{\alpha},\infty\big)$. 
Suppose that ${f\in L^{\sigma}([0,T];H)}$. 
Then the function
\begin{equation}
F(t):=\intud{0}{t}{S(t-t')(t-t')^{\alpha-1}f(t')}{t'}\,,\quad t\in [0,T]
\end{equation}
	belongs to $C([0,T];H)$. Moreover,
\begin{equation}
\label{lem3p2:est}
\sup_{t\in[0,T]}\norm{F(t)}_{H}\lesssim_{\sigma,T}\norm{f}_{L^{\sigma}([0,T];H)}.
\end{equation}
\end{lemma}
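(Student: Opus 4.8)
The plan is to exploit two structural features of the propagator $S(t)=e^{-it\Delta}$: it is a $C_0$-group of isometries on $H$ (so that $\|S(\tau)g\|_H=\|g\|_H$ for every $\tau\in\R$ and $g\in H$), and each $S(\tau)$ is bounded. The bound \eqref{lem3p2:est} will follow from the isometry property together with H\"older's inequality, while the continuity statement will follow from a density argument that reduces the general case to continuous integrands.

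For the bound, since $S(t-t')$ is an isometry and $(t-t')^{\alpha-1}>0$ for $t'<t$, I would estimate
\begin{equation*}
\|F(t)\|_H \le \int_0^t (t-t')^{\alpha-1}\|f(t')\|_H\,dt'.
\end{equation*}
Applying H\"older's inequality with conjugate exponents $\sigma$ and $\sigma'=\frac{\sigma}{\sigma-1}$ gives
\begin{equation*}
\|F(t)\|_H \le \Big(\int_0^t (t-t')^{(\alpha-1)\sigma'}\,dt'\Big)^{1/\sigma'}\,\|f\|_{L^{\sigma}([0,T];H)}.
\end{equation*}
The remaining integral converges precisely because $(\alpha-1)\sigma'>-1$, which is equivalent to $\alpha\sigma>1$, i.e. to the hypothesis $\sigma>\frac1\alpha$; moreover it is bounded by a constant $C_{\sigma,\alpha}\,T^{\,\alpha-1/\sigma}$ uniformly over $t\in[0,T]$. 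This yields \eqref{lem3p2:est}.

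For continuity I would first treat $f\in C([0,T];H)$ and then pass to general $f$ by density. For continuous $f$, the change of variables $r=t-t'$ rewrites $F(t)=\int_0^t r^{\alpha-1}S(r)f(t-r)\,dr$. For fixed $r\in(0,t_0)$ the integrand $r^{\alpha-1}S(r)f(t-r)$ converges in $H$ as $t\to t_0$, using continuity of $f$ and boundedness of the fixed operator $S(r)$, and it is dominated by $r^{\alpha-1}\|f\|_{C([0,T];H)}$, which is integrable on $[0,T]$ since $\alpha>0$. Vector-valued dominated convergence then gives $F(t)\to F(t_0)$ in $H$, so $F\in C([0,T];H)$. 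For general $f\in L^{\sigma}([0,T];H)$, I would choose $f_n\in C([0,T];H)$ with $f_n\to f$ in $L^{\sigma}$; writing $F_n$ for the corresponding integrals, the bound \eqref{lem3p2:est} applied to $f-f_n$ gives $\sup_{t\in[0,T]}\|F(t)-F_n(t)\|_H\lesssim\|f-f_n\|_{L^{\sigma}([0,T];H)}\to0$. Hence $F$ is a uniform limit of functions in $C([0,T];H)$ and is therefore itself continuous.

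The $L^\infty$ bound is routine; the only genuinely delicate point is the continuity, where the kernel $(t-t')^{\alpha-1}$ is singular at the diagonal and $t$ appears both in the upper limit of integration and inside the propagator. The key maneuver is the substitution $r=t-t'$, which removes the $t$-dependence from $S$ and isolates the singularity into the fixed, integrable factor $r^{\alpha-1}$; after that, dominated convergence applies cleanly for continuous integrands, and the uniform bound upgrades the conclusion to all of $L^{\sigma}$ with no further work.
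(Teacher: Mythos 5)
Your proof is correct, and it matches the intended argument: the paper does not prove this lemma itself but imports it from Da Prato \cite[Lemma~2.7]{daprato-kol-eqns-04}, whose proof is exactly your scheme. Specifically, the isometry of $S(t-t')$ on $H$ plus H\"older with the exponent condition $(\alpha-1)\sigma'>-1\Longleftrightarrow \sigma>\frac{1}{\alpha}$ gives the uniform bound with constant $C_{\sigma,\alpha}T^{\alpha-\frac1\sigma}$, and the substitution $r=t-t'$ followed by vector-valued dominated convergence (for continuous $f$) together with density of $C([0,T];H)$ in $L^\sigma([0,T];H)$ and the uniform bound yields continuity for general $f$, just as in the cited source.
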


We make use of the above lemma in conjunction with the following fact:
\begin{equation}
\label{factorisation}
\intud{\mu}{t}{(t-t')^{\alpha-1}(t'-\mu)^{-\alpha}}{t'}=\frac{\pi}{\sin(\pi\alpha)}\,,
\end{equation}
for all $0<\alpha <1$ and all  $0\le \mu <t$. 
This can be seen via considerations with Euler-Beta functions, see \cite{daprato-kol-eqns-04}.

We now treat the additive and multiplicative cases separately below in Subsection \ref{subsect:sto-conv-add} and \ref{subsect:sto-conv-mult} respectively. The arguments for the two cases are similar, albeit with some extra technicalities in the multiplicative case.

\subsection{The additive stochastic convolution}\label{subsect:sto-conv-add}
By Fourier expansion, the stochastic convolution \eqref{sc:Psia} for the additive noise problem can be written as
\begin{equation}
\label{Psia}
\Psia(t)  =  \sum_{n\in\Z^d} e_n
    \sum_{j\in\Z^d}  \widehat{(\phi e_j )}(n)  \int_0^t e^{i(t-t')|n|^2}d\beta_j(t')\, .
\end{equation}   
We first prove the following $X^{s,b}$-estimate on $\Psi$:
\begin{lemma}
	\label{stoc-conv-est-add}
Let $s\ge 0$, $0\le b<\frac{1}{2}$, $T>0$, and $\sigma\in [2,\infty)$. 
Assume that 
$\phi\in \mathcal{L}^2(L^2(\T^d); H^s(\T^d))$. 
Then for $\Psi$ given by \eqref{Psia} we have
\begin{align}
\EE\left[  \|\Psia\|^{\sigma}_{X^{s,b}{([0,T])}} \right]
	&\lesssim T^\frac{\sigma}{2}(1+T^2)^\frac{\sigma}{2}
	\|\phi\|_{\L^2(L^2(\T^d);H^s(\T^d))}^{\sigma}\,.
\end{align}
\end{lemma}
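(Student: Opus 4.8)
The plan is to reduce the estimate to a frequency-by-frequency computation in the interaction representation and then invoke the Burkholder--Davis--Gundy inequality. Since $s\ge 0$ and $0\le b<\frac12$, the equivalence \eqref{Xsblocalsim} lets me replace $\|\Psia\|_{X^{s,b}([0,T])}$ by $\|\one_{[0,T]}\Psia\|_{X^{s,b}}$, and the interaction-representation form \eqref{Xsb-interact-rep} gives
\begin{equation*}
\|\one_{[0,T]}\Psia\|_{X^{s,b}}^2 = \sum_{n\in\Z^d}\jb{n}^{2s}\big\|\one_{[0,T]}(t)\,g_n(t)\big\|_{H^b_t(\R)}^2,\qquad g_n(t):=\widehat{S(-t)\Psia(t)}(n).
\end{equation*}
A direct computation from \eqref{Psia} shows that the oscillatory factors $e^{\pm it|n|^2}$ cancel, leaving the (complex) Gaussian process $g_n(t)=\sum_{j\in\Z^d}\widehat{(\phi e_j)}(n)\int_0^t e^{-it'|n|^2}\,d\beta_j(t')$. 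Thus the task becomes estimating $\EE\big[\|\one_{[0,T]}g_n\|_{H^b_t}^{\sigma}\big]$ and summing the bounds against $\jb{n}^{2s}$.

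Next I would exploit that $b<\frac12$ through the Gagliardo (Sobolev--Slobodeckij) characterization
\begin{equation*}
\|f\|_{H^b_t}^2\sim\|f\|_{L^2(\R)}^2+\iint_{\R\times\R}\frac{|f(t)-f(t')|^2}{|t-t'|^{1+2b}}\,dt\,dt',
\end{equation*}
applied to $f=\one_{[0,T]}g_n$ (the case $b=0$ being just the $L^2$ norm). Expanding the double integral, the diagonal block over $[0,T]^2$ produces the increments $g_n(t)-g_n(t')$, while the off-diagonal blocks, where one variable leaves $[0,T]$ and $f$ vanishes, produce $\int_0^T|g_n(t)|^2\big(t^{-2b}+(T-t)^{-2b}\big)\,dt$, which is integrable precisely because $2b<1$. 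The point of this decomposition is that everything is now expressed through the value $g_n(t)$ and the increment $g_n(t)-g_n(t')$, the natural quantities for a martingale estimate.

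The probabilistic heart is then the Burkholder--Davis--Gundy inequality (Lemma~\ref{BDG}): writing $g_n(t)-g_n(t')=\int_{t'}^{t}\psi\,dW$ with $\psi(r)h=e^{-ir|n|^2}\widehat{(\phi h)}(n)$, so that $\|\psi(r)\|_{\L^2(L^2;\C)}^2=\sum_{j}|\widehat{(\phi e_j)}(n)|^2$ is constant in $r$, I obtain
\begin{equation*}
\big(\EE|g_n(t)-g_n(t')|^{\sigma}\big)^{2/\sigma}\lesssim_\sigma|t-t'|\sum_{j}|\widehat{(\phi e_j)}(n)|^2,\qquad \big(\EE|g_n(t)|^{\sigma}\big)^{2/\sigma}\lesssim_\sigma t\sum_{j}|\widehat{(\phi e_j)}(n)|^2.
\end{equation*}
Since $\sigma\ge2$, I can use Minkowski's inequality in $L^{\sigma/2}(\Omega)$ to pull the expectation inside the sum over $n$ and inside the $t$- and $(t,t')$-integrals coming from the Gagliardo formula. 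The $L^2$-in-time term contributes $\int_0^T t\,dt\sim T^2$, while the increment and boundary terms contribute $\iint_{[0,T]^2}|t-t'|^{-2b}\,dt\,dt'\sim T^{2-2b}$ and $\int_0^T t\big(t^{-2b}+(T-t)^{-2b}\big)\,dt\sim T^{2-2b}$ respectively; both $T^2$ and $T^{2-2b}$ are $\lesssim T(1+T^2)$ for $0\le b<\frac12$. Collecting these and summing the remaining factor $\sum_n\jb{n}^{2s}\sum_j|\widehat{(\phi e_j)}(n)|^2=\|\phi\|_{\L^2(L^2;H^s)}^2$ yields $\big(\EE\|\one_{[0,T]}\Psia\|_{X^{s,b}}^{\sigma}\big)^{2/\sigma}\lesssim T(1+T^2)\|\phi\|_{\L^2(L^2;H^s)}^2$, and raising to the power $\frac{\sigma}{2}$ gives the claim. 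The main obstacle I anticipate is the careful bookkeeping of the truncation to $[0,T]$ inside $H^b_t$ — justifying the Gagliardo split and controlling the boundary terms with the sharp $T^{2-2b}$ gain — together with the routine but delicate interchange of the expectation with the frequency sum and the time integrals via Minkowski's inequality.
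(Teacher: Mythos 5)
Your proof is correct, but it takes a genuinely different route from the paper's. The paper stays on the Fourier side in time: after the same reduction via \eqref{Xsblocalsim} and \eqref{Xsb-interact-rep}, it applies the stochastic Fubini theorem to compute $\mathcal{F}_t[\one_{[0,T]}g_n](\tau)$ explicitly, bounds the oscillatory integral by $\big|\int_{t'}^{\infty}\one_{[0,T]}(t)e^{-it\tau}\,dt\big|\lesssim\min\{T,|\tau|^{-1}\}$, applies Burkholder--Davis--Gundy once, pointwise in $(\tau,n)$, and lets the hypothesis $b<\frac12$ enter through the convergence of $\int_{|\tau|\ge1}\jb{\tau}^{2b-2}\,d\tau$; the time truncation is absorbed entirely into the $\min\{T,|\tau|^{-1}\}$ factor, so no boundary terms ever appear. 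You instead work in physical time with the Sobolev--Slobodeckij characterization of $H^b_t$ --- incidentally the same characterization \eqref{sobolev-char} the paper itself invokes later, in Lemma~\ref{cutoff-bdd}, for the deterministic cutoff estimates --- applying Lemma~\ref{BDG} to the values and increments of the genuine martingale $g_n$, with $b<\frac12$ entering through the integrability of $|t-t'|^{-2b}$ on the diagonal block and of the boundary weights $t^{-2b}$, $(T-t)^{-2b}$ coming from the truncation. What each approach buys: the paper's requires the stochastic Fubini interchange of $dt$ and $d\beta_j$ but needs only one BDG application and no bookkeeping at the endpoints of $[0,T]$; yours avoids stochastic Fubini altogether (BDG is applied directly to $\int_{t'}^{t}\psi\,dW$, using that the Hilbert--Schmidt norm of $\psi$ is constant in time), at the cost of the three-block decomposition of the Gagliardo double integral and the separate treatment of $b=0$, which you correctly flag since the seminorm characterization degenerates there and the off-diagonal $t'$-integrals diverge at $b=0$. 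Your Minkowski interchange in $L^{\sigma/2}(\Omega)$ (using $\sigma\ge2$) is the same device as the paper's, your time integrals $T^2$, $T^{2-2b}\lesssim T(1+T^2)$ reproduce the stated $T^{\frac{\sigma}{2}}(1+T^2)^{\frac{\sigma}{2}}$ dependence exactly, and in both arguments the implicit constant degenerates as $b\uparrow\frac12$, consistent with the lemma fixing $b<\frac12$.
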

\begin{proof}
	Since $\one_{[0,T]}(t)\one_{[0,T]}(t')=\one_{[0,T]}(t)=1$ whenever $0\le t'\le t\le T$, we have 
	\begin{align*}
	\one_{[0,T]}(t)\Psia(t)(x)  &= 
	\sum_{n\in\Z^d} e_n \sum_{j\in\Z^d} 
	\ft{\phi e_j}(n) \one_{[0,T]}(t)  e^{it|n|^2}
	 \int_0^t \one_{[0,T]}(t') e^{-it'|n|^2}{d}\beta_j(t')
\end{align*}
By \eqref{Xsblocalsim}, we have
\begin{align}
	\xsbt{\Psia(t)}{s}{b}{[0,T]}&\sim\xsb{\one_{[0,T]}(t)\Psia(t)}{s}{b}\notag\\
	&= \|\jb{n}^s\jb{\tau}^b\F_{t,x} \brac{S(-t)\one_{[0,T]}(t) \Psia(t)}(\tau,n) \|_{L^2_\tau\l^2_n}\notag\\
	&=\Big\|\jb{n}^s\jb{\tau}^b 
	\mathcal{F}_t\big[g_n(t)\big](\tau)
	\Big\|_{L^2_{\tau}\ell^2_n}\,,\label{continue1}
\end{align}
where
	\[g_n(t):=\sum_{j\in\Z^d} \one_{[0,T]}(t) \int_0^t \one_{[0,T]}(t') 
	 e^{-it'|n|^2}\ft{\phi e_j}(n){d}\beta_j(t')\,.\] 
	By the stochastic Fubini theorem 
	(see \cite[Theorem~4.33]{daprato-zab-inf-dim}),  
	we have
	\begin{align*}
	\mathcal{F}_t[g_n(t)](\tau)
	&=\int_{\R}{e^{-it\tau}g_n(t)} dt\\
	&=\sum_{j\in\Z^d}\int_{-\infty}^\infty\one_{[0,T]}(t') e^{-it'|n|^2} \ft{\phi e_j}(n) \int_{t'}^{\infty} \one_{[0,T]}(t)e^{-it\tau}\,{d}t\, {d}\beta_j(t').
	\end{align*}
Since
	\begin{equation}\label{IBP-bound}
		\left|\int_{t'}^{\infty} \one_{[0,T]}(t)e^{-it\tau}\,{d}t\right|\lesssim
		\min\{ T,|\tau|^{-1}\}\,,
	\end{equation}
by Burkholder-Davis-Gundy inequality (Lemma~\ref{BDG}), 
we get
\begin{gather}
\begin{split}
\label{eqn3p5}
	\EE\Big[|\mathcal{F}_t[g_n(t)](\tau)|^{\sigma}\Big]
	&\les \left[
\int_0^T \sum_{j\in\Z^d}	
	\left|\widehat{\phi e_j}(n)
	\int_{t'}^{\infty}\one_{[0,T]}(t)e^{-it\tau}\,dt \right|^2 dt'\right]^{\frac{\sigma}{2}}\\
	&\les \left[ T \sum_{j\in\Z^d}	
	|\widehat{\phi e_j}(n)|^2 
	 \min\{ T^2,|\tau|^{-2}\}\right]^{\frac{\sigma}{2}}\,.
	\end{split}
	\end{gather}
By \eqref{continue1}, \eqref{eqn3p5}, 
and Minkowski inequality,
we get
	\begin{align*}
\norm{\Psia}_{L^{\sigma}(\Omega;X^{s,b}([0,T]))}
	&\le
	\left(
\sum_{n\in\Z^d}\int_{-\infty}^\infty{\jb{n}^{2s}\jb{\tau}^{2b}
	\brac{\EE\left[\left|\mathcal{F}[g_n](\tau)\right|^{\sigma}\right]}^\frac{2}{\sigma}}{\,d\tau}
	\right)^{\frac12}\\
	&\lesssim
	T^{\frac12}
	\left( \sum_{n,j\in\Z^d}\jb{n}^{2s}|\widehat{\phi e_j}(n)|^2\int_{-\infty}^\infty{\jb{\tau}^{2b}\min\{ T^2,|\tau|^{-2}\}}{\,d\tau}\right)^{\frac12}\\
	&\lesssim
	T^{\frac12}\norm{\phi}_{\L^2(L^2;H^s)}
	\left(T^2\intd{|\tau|<1}{}{\tau}+
	\intd{|\tau|\ge 1}{\jb{\tau}^{2b-2}}{\tau}\right)^{\frac12} .
	\end{align*}
 This completes the proof of Lemma~\ref{stoc-conv-est-add}. 
\end{proof}
We now prove that $\Psi$ has a continuous version taking values in $H^s(\T^d)$. This is the content of the next lemma. 

\begin{lemma}[Continuity of the additive noise]
\label{cts-stoc-conv-add}
Let $s\ge 0$, $T>0$, and $2\le \sigma<\infty$.   
Assume that 
$\phi\in \mathcal{L}^2(L^2(\T^d); H^s(\T^d))$. 
Then $\Psia(\cdot)$ belongs to $C([0,T];H^s(\T^d))$ almost surely
and
\begin{equation}
\label{lem3p4:est}
\EE\Bigg[\sup_{t\in[0,T]}\norm{\Psia(t)}_{H^s(\T^d)}^{\sigma}\Bigg] 
\les_{T} \,\norm{\phi}^{\sigma}_{\L^2(L^2(\T^d);H^s(\T^d))}\,.
\end{equation}
\end{lemma}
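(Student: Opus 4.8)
The plan is to use the \emph{factorization method} of Da Prato, as prepared in Lemma~\ref{fact-meth} and \eqref{factorisation}. First I would fix an exponent $\alpha\in(0,\tfrac12)$ and introduce the auxiliary process
\[
Y(t'):=\int_0^{t'}(t'-\mu)^{-\alpha}S(t'-\mu)\phi\,dW(\mu),\qquad t'\in[0,T].
\]
Using the group property $S(t-t')S(t'-\mu)=S(t-\mu)$ together with the stochastic Fubini theorem, one rewrites the stochastic convolution \eqref{sc:Psia} as
\[
\Psia(t)=\frac{\sin(\pi\alpha)}{\pi}\int_0^t(t-t')^{\alpha-1}S(t-t')\,Y(t')\,dt',
\]
where the constant $\tfrac{\sin(\pi\alpha)}{\pi}$ arises from collapsing the inner $t'$-integral via the Beta-function identity \eqref{factorisation}. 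This rewriting is the heart of the argument: it trades the stochastic integral (which is only H\"older-$\tfrac12^-$ in time) for the deterministic, smoothing operator $F$ of Lemma~\ref{fact-meth}, at the cost of introducing the mild singularity $(t-t')^{\alpha-1}$.

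Next I would estimate the moments of $Y$. Since $S(t)$ acts as a unitary group on $H^s(\T^d)$, the integrand $\psi(\mu):=(t'-\mu)^{-\alpha}S(t'-\mu)\phi$ is deterministic (hence adapted) and satisfies $\norm{\psi(\mu)}_{\L^2(L^2;H^s)}=(t'-\mu)^{-\alpha}\norm{\phi}_{\L^2(L^2;H^s)}$. Applying the Burkholder-Davis-Gundy inequality (Lemma~\ref{BDG}) at each fixed $t'$ over $[0,t']$ gives
\[
\EE\big[\norm{Y(t')}_{H^s}^{\sigma}\big]\les_\sigma\Big(\int_0^{t'}(t'-\mu)^{-2\alpha}\,d\mu\Big)^{\frac{\sigma}{2}}\norm{\phi}_{\L^2(L^2;H^s)}^{\sigma}\les_{T}\norm{\phi}_{\L^2(L^2;H^s)}^{\sigma},
\]
where finiteness of the $\mu$-integral is exactly what forces $\alpha<\tfrac12$. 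Integrating in $t'$ over $[0,T]$ and using Tonelli yields $\EE\big[\norm{Y}_{L^{\sigma}([0,T];H^s)}^{\sigma}\big]\les_{T}\norm{\phi}_{\L^2(L^2;H^s)}^{\sigma}$; in particular $Y\in L^{\sigma}([0,T];H^s(\T^d))$ almost surely.

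Finally I would invoke Lemma~\ref{fact-meth} pathwise. Provided $\alpha$ also satisfies $\sigma>\tfrac1\alpha$, that lemma applies to $Y$ and shows that the right-hand side of the factorization formula---hence $\Psia$---belongs to $C([0,T];H^s(\T^d))$ almost surely, with $\supT\norm{\Psia(t)}_{H^s}\les_{\sigma,T}\norm{Y}_{L^{\sigma}([0,T];H^s)}$. Taking $\sigma$-th moments and inserting the bound on $Y$ then produces \eqref{lem3p4:est}.

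The main obstacle will be reconciling the two constraints on $\alpha$: integrability of $Y$ demands $\alpha<\tfrac12$, whereas Lemma~\ref{fact-meth} demands $\alpha>\tfrac1\sigma$, and these are compatible only when $\sigma>2$. For $\sigma>2$ I would simply pick any $\alpha\in(\tfrac1\sigma,\tfrac12)$. The borderline case $\sigma=2$ I would not treat directly; instead I would establish the estimate for some fixed $\sigma_0>2$ (which already delivers the almost sure continuity, independent of $\sigma$) and then deduce the $\sigma=2$ bound from the continuous embedding $L^{\sigma_0}(\Omega)\hookrightarrow L^{2}(\Omega)$ valid on the probability space. A secondary technical point is the rigorous justification of the stochastic Fubini theorem used to derive the factorization identity, which amounts to checking the integrability of the kernel $(t-t')^{\alpha-1}(t'-\mu)^{-\alpha}$ against the Hilbert-Schmidt norm of $\phi$.
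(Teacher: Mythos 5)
Your proposal is correct and follows essentially the same route as the paper's own proof: the identical Da Prato factorization $\Psia(t)=\frac{\sin(\pi\alpha)}{\pi}\int_0^t (t-t')^{\alpha-1}S(t-t')Y(t')\,dt'$ with the auxiliary process $Y$ (the paper's $f$), the same Burkholder--Davis--Gundy bound exploiting the $H^s(\T^d)$-isometry of $S(t)$ and the constraint $2\alpha<1$, and the same concluding appeal to Lemma~\ref{fact-meth}. If anything, you are more careful than the paper at the borderline case $\sigma=2$, where the competing requirements $\alpha<\tfrac12$ and $\sigma>\tfrac1\alpha$ force one to work with some $\sigma_0>2$ and then descend to $\sigma=2$ by H\"older on the probability space --- a point the paper's proof leaves implicit by simply proving the $f$-bound ``for some $\sigma>\tfrac1\alpha$''.
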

\begin{proof}
	We fix $\alpha\in \brac{0, \frac12}$ 
	and we write the stochastic convolution as follows: 
	\begin{align}
	\begin{split}\label{factorization}
	\Psia(t)
	&=\frac{\sin(\pi\alpha)}{\pi}\intud{0}{t}{\left[\intud{\mu}{t}{(t-t')^{\alpha-1}(t'-\mu)^{-\alpha}}{t'}\right]S(t-\mu)\phi}{W(\mu)}\\
	&=\frac{\sin(\pi\alpha)}{\pi}\intud{0}{t}{S(t-t')(t-t')^{\alpha-1}
		\intud{0}{t'}{S(t'-\mu)(t'-\mu)^{-\alpha}\phi}{W(\mu)}
	}{t'}\,,
	\end{split}
	\end{align}
	where we used the stochastic Fubini theorem  \cite[Theorem~4.33]{daprato-zab-inf-dim} 
	and the group property of $S(\cdot)$. 
	By Lemma~\ref{fact-meth} and \eqref{factorization} it suffices to show that the process
	\[f(t'):=\intud{0}{t'}{S(t'-\mu)(t'-\mu)^{-\alpha}\phi}{W(\mu)}\]
	satisfies
\begin{equation}
\label{cts-stoc-conv-ref1}
\EE\bigg[\intud{0}{T}{\norm{f(t')}_{H^s_x}^{\sigma}}{t'}\bigg]
 \le C\big(T,\sigma,\norm{\phi}_{\L^2(L^2;H^s)}\big) <\infty\,,
\end{equation}
for some $\sigma>\frac{1}{\alpha}$. 

By Burkholder-Davis-Gundy inequality (Lemma \ref{BDG}), 
	for any $\sigma\ge 2$ and any $t'\in[0,T]$, 
	we get 
	\begin{align*}
	\EE\left[\norm{f(t')}_{H^s_x}^{\sigma}\right]	
	 &\les \left( \int_0^{t'} \|S(t'-\mu)(t'-\mu)^{-\alpha} \phi\|^2_{\L^2(L^2;H^s)} d\mu\right)^{\frac{\sigma}{2}}\\
	 &= \left( \int_0^{t'} (t'-\mu)^{-2\alpha} 
	  \sum_{j\in\Z^d} \|S(t'-\mu)\phi e_j\|^2_{H^s} d\mu \right)^{\frac{\sigma}{2}}\\
	 &\le \|\phi\|_{\L^2(L^2;H^s)}^{\sigma} \left(\frac{T^{1-2\alpha}}{1-2\alpha}\right)^{\frac{\sigma}{2}},
	\end{align*}
	where in the last step we used $2\alpha\in(0,1)$ and the $H^s(\T^d)$-isometry property of $S(t'-\mu)$. 
	Hence
	\begin{align*}
	\textup{LHS of }\eqref{cts-stoc-conv-ref1} = 
	\intud{0}{T}{\EE\left[\norm{f(t')}_{H^s_x}^{\sigma}\right]}{t'}\les \norm{\phi}_{{\L^2(L^2;H^s)}}^{\sigma}{T^{\frac{\sigma}{2}(1-2\alpha)+1}}<\infty\,. 
	\end{align*}
The estimate \eqref{lem3p4:est} follows from \eqref{lem3p2:est}.    
\end{proof}

\subsection{The multiplicative stochastic convolution}\label{subsect:sto-conv-mult}  
The multiplicative stochastic convolution $\Psi=\Psi[u]$ from \eqref{sc:Psim} can be written as
\begin{equation}
\label{sect3:Psiu}
\Psi[u](t) =
\sum_{n\in\Z^d} e_n
   \sum_{j\in\Z^d} \int_0^t e^{i(t-t')|n|^2} \widehat{(u(t') \phi e_j )}(n) d\beta_j(t') .
\end{equation}
Recall that if $s>\frac{d}{2}$, then we have access to the algebra property of $H^s(\T^d)$:
\begin{align}\label{sob-alg}
	\norm{fg}_{H^s(\T^d)}\les \norm{f}_{H^s(\T^d)}\norm{g}_{H^s(\T^d)}
\end{align}
which is an easy consequence of the Cauchy-Schwartz inequality. This simple fact is useful for our analysis in the multiplicative case. On the other hand, \eqref{sob-alg} is not available to us for regularities below $\frac{d}{2}$, 
but we use the following inequalities.
\begin{lemma}
\label{lem3p5}
Let $0<s\le \frac{d}{2}$ and $1\le r < \frac{d}{d-s}$. Then 
\begin{equation}
\label{lem3p5uphij}
\|f u \|_{H^s(\T^d)} \les   \|f\|_{\F L^{s,r}(\T^d)} \|u\|_{H^s(\T^d)} .
\end{equation}
Also, for $s=0$, we have 
\begin{equation}
\label{lem3p5s0}
\|f u \|_{L^2(\T^d)} \les   \|f\|_{\F L^{0,1}(\T^d)} \|u\|_{L^2(\T^d)} .
\end{equation}
\end{lemma}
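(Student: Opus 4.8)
The plan is to prove the product estimate by passing to the Fourier side and reducing it to a known convolution inequality for sequences. Since the $H^s(\T^d)$-norm measures $\jb{n}^s \widehat{(fu)}(n)$ in $\ell^2_n$, and $\widehat{(fu)}(n) = (\widehat f * \widehat u)(n) = \sum_{m} \widehat f(n-m)\widehat u(m)$, the heart of the matter is to distribute the weight $\jb{n}^s$ across the two factors. First I would use the elementary frequency-weight inequality $\jb{n}^s \lesssim \jb{n-m}^s + \jb{m}^s$ (valid for $s\ge 0$), which splits the estimate into two symmetric-type pieces: one where the derivative lands on $f$ and one where it lands on $u$. This reduces \eqref{lem3p5uphij} to bounding, in $\ell^2_n$, convolutions of the form $(\jb{\cdot}^s|\widehat f|) * |\widehat u|$ and $|\widehat f| * (\jb{\cdot}^s|\widehat u|)$.

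The key analytic input is Young's convolution inequality on $\ell^p(\Z^d)$: if $\tfrac1p + \tfrac1q = 1 + \tfrac1{p'}$ then $\|a*b\|_{\ell^{p'}} \le \|a\|_{\ell^p}\|b\|_{\ell^q}$. For the piece where the weight is on $f$, I would aim to place $\jb{\cdot}^s|\widehat f| \in \ell^r$ (this is exactly $\|f\|_{\F L^{s,r}}$) and $|\widehat u|\in\ell^2$, and I need the output in $\ell^2$; Young's inequality then demands $\tfrac1r + \tfrac12 = 1 + \tfrac12$, i.e. $r=1$. To allow the full range $r<\tfrac{d}{d-s}$ rather than only $r=1$, the refinement I expect to need is to first trade the remaining $\jb{\cdot}^s$ weight on the $u$-factor against an $\ell^{q}$-loss via the embedding $\ell^2 \hookrightarrow \ell^{q}$ combined with the summability of $\jb{n}^{-s}$; more precisely, for the term with the weight on $u$, write $\jb{m}^s|\widehat u(m)| \in \ell^2$ and bound $|\widehat f| \in \ell^{r}$, now asking Young with $\tfrac1r + \tfrac12 = 1 + \tfrac12$ again — so the genuine gain comes from using the weight $\jb{n}^{-s}$ that survives on the output frequency together with H\"older/Young to convert the $\ell^r$ hypothesis on $\widehat f$ into control by $\|f\|_{\F L^{s,r}}$ when $r>1$. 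Concretely, the inequality $\jb{n}^{-s}\jb{n-m}^{s}\lesssim \jb{m}^{s}$ lets me transfer the output weight onto the $f$-factor, and the condition $r<\tfrac{d}{d-s}$ is precisely what guarantees $\jb{\cdot}^{-s}\in\ell^{\rho}(\Z^d)$ for the conjugate exponent $\rho$ arising in the H\"older step (since $\jb{n}^{-s}\in\ell^{\rho}(\Z^d)$ iff $s\rho>d$, which rearranges to $r<\tfrac{d}{d-s}$).

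The main obstacle I anticipate is bookkeeping the exponents so that the dimensional threshold $r<\tfrac{d}{d-s}$ emerges exactly, rather than a strictly worse range. The sharp condition reflects that $\jb{\cdot}^{-s}$ is only summable to the required power when $s$ times the relevant conjugate exponent exceeds $d$; getting this clean requires choosing the Young/H\"older split so the surplus weight is absorbed by an $\ell^{\rho}$-norm of $\jb{\cdot}^{-s}$ whose finiteness is equivalent to $r<\tfrac{d}{d-s}$. Finally, the endpoint case $s=0$ in \eqref{lem3p5s0} is genuinely degenerate: there is no weight to distribute, the $\F L^{0,1}$-norm is just $\|\widehat f\|_{\ell^1}$, and the estimate follows directly from Young's inequality with $(p,q,p')=(1,2,2)$, i.e. $\|\widehat f * \widehat u\|_{\ell^2}\le \|\widehat f\|_{\ell^1}\|\widehat u\|_{\ell^2}$, which is why $r=1$ is forced when $s=0$.
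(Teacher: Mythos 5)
Your plan is correct and is essentially the paper's own proof: the paper splits the convolution according to the regions $|n_1|\gtrsim |n_2|$ and $|n_1|\ll |n_2|$ (equivalent to your pointwise inequality $\jb{n}^s\lesssim \jb{n-m}^s+\jb{m}^s$), then applies Young's inequality ($\ell^r\ast\ell^p\to\ell^2$ with $\tfrac1r+\tfrac1p=\tfrac32$, and $\ell^1\ast\ell^2\to\ell^2$) followed by H\"older against $\jb{\cdot}^{-s}$, where both arising exponents equal $r'$, so that $sr'>d$ is exactly $r<\tfrac{d}{d-s}$, and the $s=0$ case is Young with $(1,2,2)$, just as you say. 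One small correction to your middle paragraph: for the term with the weight on $f$, you need $\widehat u\in\ell^p$ with $p\le 2$, which is not the embedding $\ell^2\hookrightarrow\ell^q$ but rather the H\"older bound $\|\widehat u\|_{\ell^p}\le\|\jb{\cdot}^{-s}\|_{\ell^q}\|\jb{\cdot}^s\widehat u\|_{\ell^2}$ — the very step you correctly identify at the end, so no Peetre-type transfer of the output weight is actually needed.
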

\begin{proof}
Assume that $0<s\le \frac{d}{2}$ 
and let $n_1$ and $n_2$ denote the spatial frequencies of $f$ and $u$ respectively. By separating the regions $\{|n_1|\gtrsim |n_2|\}$ and $\{|n_1|\ll |n_2|\}$, and then applying Young's inequality, we have 
\begin{align*}
\| f u \|_{H^s(\T^d)} &\les 
\Big\| \big( \widehat{\jb{\nabla}^s f} * \widehat{u}   \big)(n)\Big\|_{\ell_n^2} + 
\Big\|  \big( \widehat{f} * \widehat{\jb{\nabla}^s u}  \big)(n)\Big\|_{\ell_n^2} \\
&\les \|f\|_{\F L^{s,r}} \|\widehat{u}\|_{\ell^p} + \|\widehat{f}\|_{\ell^1} \|u\|_{H^s} \,, 
\end{align*}
where $p$ is chosen such that $\frac1r + \frac1p =\frac32$. 
By H\"{o}lder inequality, for $r'$ and $q$ such that $\frac1r+\frac{1}{r'}=1$ and $\frac1q+\frac12 =\frac1p$,
\begin{align*}
 \|\widehat{f}\|_{\ell^1} &\les \|\jb{n}^{-s}\|_{\ell^{r'}} \|f\|_{\F L^{s,r}}, \\
 \|\widehat{u}\|_{\ell^p} &\les \|\jb{n}^{-s}\|_{\ell^{q}} \|u\|_{H^s} .
\end{align*}
Since $sr'>d$ and $sq>d$ provided that $r<\frac{d}{d-s}$, the conclusion  \eqref{lem3p5uphij} follows.

If $s=0$, \eqref{lem3p5s0} follows easily from Young's inequality: 
\begin{align*}
\| f u \|_{L^2(\T^d)} = \|\widehat{f} * \widehat{u}\|_{\ell^2} \les \|\widehat{f} \|_{\ell^1} \|\widehat{u}\|_{\ell^2} 
 =  \|f\|_{\F L^{0,1}} \|u\|_{L^2} .
\end{align*}
\end{proof}
Given $\phi$ as in Theorem~\ref{lwp:mult}, let us denote 
\begin{equation}
\label{defn:Cphi}
C(\phi):= \norm{\phi}_{\L^2(L^2(\T^d);\F L^{s,r}(\T^d))} <\infty\,,
\end{equation}
for $r=2$ when $s>\frac{d}{2}$, 
for some $r\in\big[1,\frac{d}{d-s}\big)$ when $0<s\le \frac{d}{2}$, and for $r=1$ when $s=0$. 
Recall that if $\phi$ is translation invariant, 
then it is sufficient to assume that $C(\phi)<\infty$ with $r=2$, for all $s\ge 0$. 
We now proceed to prove the following  $X^{s,b}$-estimate of $\Psi[u]$. 

\begin{lemma}\label{stoc-conv-est-mult}
Let $s\ge 0$, $0\le b<\frac{1}{2}$, $T>0$, and $2\le \sigma <\infty$. 
Suppose that 
$\phi$ satisfies the assumptions of Theorem~\ref{lwp:mult}.
Then, for $\Psi[u]$ given by \eqref{sc:Psim} 
we have the estimate
\begin{align}
\EE\left[
\xsbt{\Psi[u]}{s}{b}{[0,T]}^{\sigma}\right]
	&\lesssim (T^2+1)^\frac{\sigma}{2} C(\phi)^{\sigma}\,
\EE\left[\|u\|_{L^2([0,T]; H^s(\T^d))}^{\sigma}\right] 
\label{stoc-conv-est-mult1}\,.
\end{align}
\end{lemma}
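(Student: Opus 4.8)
The plan is to follow the same factorization-free route used for the additive convolution in Lemma~\ref{stoc-conv-est-add}, adapting it to the $u$-dependence of the integrand. First I would use \eqref{Xsblocalsim} to replace the time-restricted norm $\xsbt{\Psi[u]}{s}{b}{[0,T]}$ by $\xsb{\one_{[0,T]}(t)\Psi[u](t)}{s}{b}$ up to a constant, valid since $s\ge 0$ and $0\le b<\tfrac12$. Inserting the indicators $\one_{[0,T]}(t)\one_{[0,T]}(t')=\one_{[0,T]}(t)$ for $0\le t'\le t\le T$ and passing to the interaction representation \eqref{Xsb-interact-rep}, the norm becomes $\big\|\jb{n}^s\jb{\tau}^b\F_t[g_n(t)](\tau)\big\|_{L^2_\tau\ell^2_n}$, where now
\begin{equation*}
g_n(t):=\sum_{j\in\Z^d}\one_{[0,T]}(t)\int_0^t\one_{[0,T]}(t')e^{-it'|n|^2}\widehat{(u(t')\phi e_j)}(n)\,d\beta_j(t')\,.
\end{equation*}
The essential structural difference from the additive case is that the Fourier coefficient $\widehat{\phi e_j}(n)$ is replaced by the time-dependent random coefficient $\widehat{(u(t')\phi e_j)}(n)$.

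Next I would apply the stochastic Fubini theorem to interchange the $dt$ (Fourier) integral with the $d\beta_j$ integral, exactly as in the additive proof, producing the inner $dt$-integral $\int_{t'}^\infty\one_{[0,T]}(t)e^{-it\tau}\,dt$ whose modulus is bounded by $\min\{T,|\tau|^{-1}\}$ via the integration-by-parts estimate \eqref{IBP-bound}. Then Burkholder-Davis-Gundy (Lemma~\ref{BDG}), applied with the Hilbert-Schmidt norm of the integrand operator, yields for each fixed $(\tau,n)$ a bound of the form
\begin{equation*}
\EE\big[|\F_t[g_n](\tau)|^\sigma\big]\les\left[T\,\min\{T^2,|\tau|^{-2}\}\int_0^T\sum_{j\in\Z^d}\big|\widehat{(u(t')\phi e_j)}(n)\big|^2\,dt'\right]^{\frac{\sigma}{2}}.
\end{equation*}
After inserting this into the $L^2_\tau\ell^2_n$ norm and using Minkowski's inequality to move the $\sigma$-th expectation outside, the $\tau$-integral $\int\jb{\tau}^{2b}\min\{T^2,|\tau|^{-2}\}\,d\tau$ converges precisely because $b<\tfrac12$ and contributes the factor $(T^2+1)$, just as before; what remains is to control $\sum_n\jb{n}^{2s}\int_0^T\sum_j|\widehat{(u\phi e_j)}(n)|^2\,dt'=\int_0^T\sum_j\|u(t')\phi e_j\|_{H^s}^2\,dt'$ by $C(\phi)^2\|u\|_{L^2([0,T];H^s)}^2$.

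The main obstacle, and the genuinely new ingredient relative to Lemma~\ref{stoc-conv-est-add}, is this last product estimate: bounding $\sum_j\|u\,\phi e_j\|_{H^s}^2$. Here I would split on the regularity regime. When $s>\tfrac d2$ the Sobolev algebra property \eqref{sob-alg} gives $\|u\,\phi e_j\|_{H^s}\les\|u\|_{H^s}\|\phi e_j\|_{H^s}$, and summing in $j$ produces $\|u\|_{H^s}^2\sum_j\|\phi e_j\|_{H^s}^2=\|u\|_{H^s}^2\|\phi\|_{\L^2(L^2;H^s)}^2$, which is $C(\phi)^2\|u\|_{H^s}^2$ with $r=2$. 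When $0<s\le\tfrac d2$ the algebra property fails, so I would instead invoke Lemma~\ref{lem3p5}, namely $\|\phi e_j\,u\|_{H^s}\les\|\phi e_j\|_{\F L^{s,r}}\|u\|_{H^s}$ for the admissible range $r\in[1,\tfrac{d}{d-s})$ (and \eqref{lem3p5s0} with $r=1$ when $s=0$); summing the squares in $j$ then yields $\|u\|_{H^s}^2\sum_j\|\phi e_j\|_{\F L^{s,r}}^2=C(\phi)^2\|u\|_{H^s}^2$ by the definition \eqref{defn:Cphi}. Integrating in $t'$ over $[0,T]$ converts $\|u(t')\|_{H^s}^2$ into $\|u\|_{L^2([0,T];H^s)}^2$, and assembling all factors gives \eqref{stoc-conv-est-mult1}. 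The delicate point throughout is ensuring that the product estimate is applied inside the $j$-summation so that the Hilbert-Schmidt structure of $\phi$ is preserved, and that the regularity of $\phi$ (measured in $\F L^{s,r}$ rather than $H^s$ when $s\le\tfrac d2$) is exactly what Lemma~\ref{lem3p5} demands.
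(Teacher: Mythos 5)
Your skeleton (the indicator trick \eqref{Xsblocalsim}, interaction representation, stochastic Fubini, the bound \eqref{IBP-bound}, and the split of the product estimate into the algebra property for $s>\frac d2$ versus Lemma~\ref{lem3p5} for $0\le s\le\frac d2$) matches the paper, and your treatment of $\sum_j\|u\,\phi e_j\|_{H^s}^2$ is exactly the paper's. But there is a genuine gap at the Burkholder--Davis--Gundy step. You apply BDG pointwise in $(\tau,n)$ and then reassemble the $L^2_\tau\ell^2_n$-norm via Minkowski, as in the additive Lemma~\ref{stoc-conv-est-add}. That worked there only because the quadratic variation $\sum_j|\widehat{\phi e_j}(n)|^2$ was \emph{deterministic}. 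Here it is random: BDG at fixed $(\tau,n)$ gives $\EE\big[|\F_t[g_n](\tau)|^\sigma\big]\les \min\{T^2,|\tau|^{-2}\}^{\sigma/2}\,\EE\big[X_n^{\sigma/2}\big]$ with $X_n:=\int_0^T\sum_j|\widehat{(u(t')\phi e_j)}(n)|^2\,dt'$; note that your displayed bound omits this expectation (its right-hand side is a random variable while its left-hand side is not) and also carries a spurious factor $T$ imported from the additive computation, where $T=\int_0^T dt'$ arose only because the integrand was $t'$-independent. With the expectation correctly placed, Minkowski yields
\begin{equation*}
\Big(\EE\big[\xsbt{\Psi[u]}{s}{b}{[0,T]}^\sigma\big]\Big)^{\frac1\sigma}
\les\Big(\int_\R\jb{\tau}^{2b}\min\{T^2,|\tau|^{-2}\}\,d\tau\Big)^{\frac12}\Big(\sum_{n}\jb{n}^{2s}\big\|X_n\big\|_{L^{\sigma/2}(\Omega)}\Big)^{\frac12},
\end{equation*}
and at this point you are stuck: to reach $\big(\EE[\|u\|_{L^2([0,T];H^s)}^\sigma]\big)^{2/\sigma}$ you would need $\sum_n\jb{n}^{2s}\|X_n\|_{L^{\sigma/2}(\Omega)}\les\big\|\sum_n\jb{n}^{2s}X_n\big\|_{L^{\sigma/2}(\Omega)}$, but the triangle inequality in $L^{\sigma/2}(\Omega)$ runs in the opposite direction, and for $\sigma>2$ the left side can be strictly larger (even infinite while the right side is finite, e.g. if $u$ concentrates at widely varying frequencies on disjoint small-probability events). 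Your sentence ``what remains is to control $\sum_n\jb{n}^{2s}\int_0^T\sum_j|\widehat{(u\phi e_j)}(n)|^2\,dt'$'' silently performs this illegitimate interchange of $\EE$ with the $n$-sum and $\tau$-integral.

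The fix, and what the paper does, is to apply BDG \emph{once} to the stochastic integral viewed as a martingale with values in the Hilbert space $K=L^2_\tau\ell^2_n$ weighted by $\jb{n}^{2s}\jb{\tau}^{2b}$. Lemma~\ref{BDG} then keeps the expectation outside the entire quadratic variation,
\begin{equation*}
\EE\Big[\big\|\jb{n}^s\jb{\tau}^b\F_{t,x}(g)\big\|_{L^2_\tau\ell^2_n}^\sigma\Big]
\les \EE\bigg[\Big(\sum_{j,n\in\Z^d}\int_\R\int_0^T\jb{n}^{2s}\jb{\tau}^{2b}\Big|\int_{t'}^\infty\one_{[0,T]}(t)e^{-it\tau}\,dt\Big|^2\,\big|\widehat{(u(t')\phi e_j)}(n)\big|^2\,dt'\,d\tau\Big)^{\frac{\sigma}{2}}\bigg],
\end{equation*}
so that the $\tau$-integration (producing the factor $(T^2+1)^{\sigma/2}$, using $b<\frac12$) and the $n$-summation with your product estimates are all carried out pathwise inside a single $\EE[(\cdots)^{\sigma/2}]$, which is precisely the right-hand side of \eqref{stoc-conv-est-mult1}. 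With this reordering, the second half of your argument, including the case analysis in $s$ and the definition \eqref{defn:Cphi}, goes through verbatim.
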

\begin{proof}
We first prove \eqref{stoc-conv-est-mult1}. Let $g(t):=\one_{[0,T]}(t)S(-t)\Psim(t)$. 
By the stochastic Fubini theorem  \cite[Theorem~4.33]{daprato-zab-inf-dim},
\begin{align*}
\mathcal{F}_{t,x}(g)(\tau,n)
&=\intd{\R}{
	e^{-it\tau}\one_{[0,T]}(t)\sum_{j\in\Z^d}\intud{0}{t}{
		e^{-it'n^2} (\widehat{u(t')\phi e_j})(n)
	}{\beta_j(t')}
}{t}\\
&=\sum_{j\in\Z^d}\intud{0}{T}{
	\intud{t'}{\infty}{\one_{[0,T]}(t)
		e^{-it\tau}e^{-it'n^2}(\widehat{u(t')\phi e_j})(n)
	}{t}
}{\beta_j(t')}\,.
\end{align*}
Then by \eqref{Xsblocalsim} and the assumption $0\le b<\frac{1}{2}$, 
the Burkholder-Davis-Gundy inequality (Lemma~\ref{BDG}),  
and \eqref{IBP-bound}, we have 
\begin{align*}
\textup{LHS of } \eqref{stoc-conv-est-mult1} 
&\sim \EE\left[\norm{\jb{n}^s\jb{\tau}^b
 \mathcal{F}[g](n,\tau)}_{L^2_{\tau}\ell^2_n}^{\sigma}\right]\\
	&\hspace{-1cm}\lesssim 
	\EE\left[
	\brac{
		\sum_{j,n\in\Z^d}
		\intd{\R}{
			\intud{0}{T}{
				\jb{n}^{2s}\jb{\tau}^{2b}
				\left|\intud{t'}{\infty}{\one_{[0,T]}(t)e^{-it\tau}}{t}\right|^2
				\left| (\widehat{u(t')\phi e_j})(n)\right|^2
			}{t'}
		}{\tau}
	}^{\frac{\sigma}{2}}\right]\\
&\hspace{-1cm}\lesssim 
(T^2+1)^\frac{\sigma}{2}\,\EE\left[
\brac{
		\intud{0}{T}{ \sum_{j,n\in\Z^d}
			\jb{n}^{2s}\left| (\widehat{u(t')\phi e_j})(n)\right|^2
		}{t'}
}^{\frac{\sigma}{2}}\right] \,.
\end{align*}
If $s>\frac{d}{2}$, we apply the algebra property of $H^s(\T^d)$ to get
\begin{equation*}
\|u(t') \phi e_j\|_{\ell^2_j H^s} \les {\|\phi\|_{\L^2(L^2;H^s)}} \|u(t')\|_{H^s}. 
\end{equation*}
If $0\le s\le \frac{d}{2}$, we have 
\begin{equation}
\|u(t') \phi e_j\|_{\ell^2_j H^s} \les C(\phi) \|u(t')\|_{H^s}. 
\end{equation}
and thus \eqref{stoc-conv-est-mult1} follows. 
\end{proof}

Next, we prove the continuity of $\Psi[u](t)$ in the same way as in Lemma~\ref{cts-stoc-conv-add}, 
i.e. by using Lemma~\ref{fact-meth}.

\begin{lemma}[Continuity of the multiplicative noise]
\label{cts-stoc-conv-mult}
Let $T>0$, $s\ge 0$, $0\le b<\frac{1}{2}$, 
and $2\le \sigma <\infty$. 
Suppose that $u\in L^{\sigma}\big(\Omega;X^{s,b}([0,T])\big)$ 
 and that
$\phi$ satisfies the assumptions of Theorem~\ref{lwp:mult}.
	Then $\Psi[u](\cdot)$ given by \eqref{sect3:Psiu} 
	belongs to $C([0,T];H^s(\T^d))$ almost surely. 
	Moreover, 
\begin{equation}
\label{stoc-conv-est-mult2}
\EE\left[\sup_{t\in[0,T]}\norm{\Psi[u](t)}_{H^s(\T^d)}^{\sigma} \right] \les 
C(\phi)^{\sigma} \, 
 \EE\left[\|u\|_{X^{s,b}([0,T])}^{\sigma}\right]\,.
\end{equation}
\end{lemma}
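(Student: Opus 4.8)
The structure mirrors that of Lemma~\ref{cts-stoc-conv-add}: I would invoke the factorization method of Da Prato (Lemma~\ref{fact-meth}) to reduce the continuity of $\Psi[u]$ to an integrability estimate on an auxiliary process. First I would fix $\alpha\in\big(0,\frac12\big)$ and rewrite the stochastic convolution via the identity \eqref{factorisation}, obtaining
\begin{align*}
\Psi[u](t) = \frac{\sin(\pi\alpha)}{\pi}\intud{0}{t}{S(t-t')(t-t')^{\alpha-1} f(t')}{t'}\,,
\end{align*}
where
\begin{align*}
f(t'):=\intud{0}{t'}{S(t'-\mu)(t'-\mu)^{-\alpha}\,u(\mu)\phi}{W(\mu)}\,.
\end{align*}
This step uses the stochastic Fubini theorem and the group property of $S(\cdot)$, exactly as in \eqref{factorization}. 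By Lemma~\ref{fact-meth} (applied in the Hilbert space $H=H^s(\T^d)$), the continuity of $\Psi[u]$ and the bound \eqref{stoc-conv-est-mult2} will follow once I show that $f$ satisfies
\begin{align*}
\EE\bigg[\intud{0}{T}{\norm{f(t')}_{H^s_x}^{\sigma}}{t'}\bigg]\les C(\phi)^\sigma\, \EE\big[\|u\|_{X^{s,b}([0,T])}^\sigma\big]
\end{align*}
for a suitable $\sigma>\frac1\alpha$.

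\textbf{Bounding $f$.} To estimate $\EE\big[\norm{f(t')}_{H^s_x}^{\sigma}\big]$ I would apply the Burkholder-Davis-Gundy inequality (Lemma~\ref{BDG}) with the integrand $\psi(\mu)=(t'-\mu)^{-\alpha}S(t'-\mu)\,u(\mu)\phi$ viewed as an element of $\mathcal{L}^2(L^2(\T^d);H^s(\T^d))$. Using the $H^s$-isometry of $S(t'-\mu)$, its Hilbert-Schmidt norm is
\begin{align*}
\norm{(t'-\mu)^{-\alpha}S(t'-\mu)u(\mu)\phi}_{\mathcal{L}^2(L^2;H^s)}^2 = (t'-\mu)^{-2\alpha}\sum_{j\in\Z^d}\norm{u(\mu)\phi e_j}_{H^s}^2\,.
\end{align*}
Here the product estimates of Lemma~\ref{lem3p5} (or the algebra property \eqref{sob-alg} when $s>\frac d2$) give $\norm{u(\mu)\phi e_j}_{\ell^2_j H^s}\les C(\phi)\,\norm{u(\mu)}_{H^s}$, so that BDG yields
\begin{align*}
\EE\big[\norm{f(t')}_{H^s_x}^\sigma\big]\les C(\phi)^\sigma\,\EE\bigg[\Big(\intud{0}{t'}{(t'-\mu)^{-2\alpha}\norm{u(\mu)}_{H^s}^2}{\mu}\Big)^{\frac\sigma2}\bigg]\,.
\end{align*}

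\textbf{The main obstacle and its resolution.} The crucial difference from the additive case is that $u(\mu)$ is no longer deterministic and is not controlled in $C([0,T];H^s)$ but only in $X^{s,b}([0,T])$ with $b<\frac12$, so I cannot pull $\sup_\mu\norm{u(\mu)}_{H^s}$ out of the integral. This is the step I expect to be the main difficulty. The plan is to integrate the displayed bound in $t'$ over $[0,T]$, apply Fubini and the fact that the singular kernel is integrable, $\intud{0}{T}{(t'-\mu)^{-2\alpha}}{t'}\les_T 1$ since $2\alpha<1$, and then relate the resulting $L^2_\mu H^s$-type quantity to the $X^{s,b}$-norm. Concretely, choosing $\sigma=2$ first (then bootstrapping, or arguing directly for general $\sigma\ge 2$ via Minkowski and Hölder in $t'$ to exploit the extra integration), the space-time integral $\intud{0}{T}{\norm{u(\mu)}_{H^s}^2}{\mu}=\norm{u}_{L^2([0,T];H^s)}^2$ is dominated by $\norm{u}_{X^{s,b}([0,T])}^2$ for $b\ge 0$, via \eqref{Xsb-interact-rep} and the embedding that controls $L^2_t H^s_x$ by $X^{s,0}\subseteq X^{s,b}$. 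Picking $\alpha$ small enough that $\sigma>\frac1\alpha$ holds for the target exponent $\sigma$, I would obtain \eqref{cts-stoc-conv-ref1} in the multiplicative form and conclude both the almost-sure continuity of $\Psi[u](\cdot)$ in $H^s(\T^d)$ and the quantitative bound \eqref{stoc-conv-est-mult2} from \eqref{lem3p2:est}.
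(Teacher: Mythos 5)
Your reduction coincides with the paper's: the factorization identity \eqref{factorisation} with Lemma~\ref{fact-meth}, then the Burkholder-Davis-Gundy inequality together with Lemma~\ref{lem3p5} (or \eqref{sob-alg} when $s>\frac d2$) to arrive at $\EE\big[\|f(t')\|_{H^s}^\sigma\big]\les C(\phi)^\sigma\,\EE\big[\big(\int_0^{t'}(t'-\mu)^{-2\alpha}\|u(\mu)\|_{H^s}^2\,d\mu\big)^{\sigma/2}\big]$. The genuine gap lies in your resolution of the step you yourself flag as the main obstacle, and it is an exponent clash, not a technicality. Lemma~\ref{fact-meth} requires $\sigma>\frac{1}{\alpha}$, i.e.\ $\alpha>\frac{1}{\sigma}$, so $\alpha$ must be chosen \emph{large}, not ``small enough'' as you write. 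In particular for $\sigma=2$, which is precisely the exponent that matters for Theorem~\ref{lwp:mult} and Proposition~\ref{gwp-mult-R} (solutions live in $L^2(\Omega;\cdot)$), you would need $\alpha>\frac12$, hence $2\alpha>1$, and your kernel bound $\int_0^T(t'-\mu)^{-2\alpha}\,dt'\les_T 1$ is then false: the base case of your scheme is empty. The suggested ``bootstrap'' also runs in the wrong direction, since H\"older only passes from higher to lower moments, so nothing for $\sigma>2$ can be deduced from the $\sigma=2$ case.

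Moreover, the failure is not confined to $\sigma=2$: if, as you propose, $u$ is controlled only through $\|u\|_{L^2([0,T];H^s)}$ (the $X^{s,0}$ information, ``for $b\ge0$''), then no choice of $\alpha$ works for \emph{any} $\sigma$. Indeed, after the Minkowski/Young step the kernel must be integrated in $t'$ with exponent $\sigma$, and $\big\|(t'-\mu)^{-\alpha}\big\|_{L^\sigma_{t'}((\mu,T])}<\infty$ forces $\alpha\sigma<1$, flatly contradicting the requirement $\alpha\sigma>1$ from Lemma~\ref{fact-meth}. The missing idea, which is the actual content of the paper's proof, is to spend the temporal regularity $b>0$: after swapping norms by Minkowski (putting $L^\sigma_{t'}$ inside $L^2_\mu$, legitimate since $\sigma\ge2$), the paper applies H\"older in $\mu$ with exponents $\frac{4}{1+2b}$ and $\frac{4}{1-2b}$ and a Sobolev embedding in time to bound $\big\|\,\|u(\mu)\|_{H^s}\big\|_{L^{4/(1-2b)}_\mu}$ by $\|u\|_{X^{s,b}([0,T])}$, using \eqref{Xsb-interact-rep}, \eqref{Xsblocalsim} and the $H^s$-isometry of $S(-\mu)$. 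This integrability of $\|u(\mu)\|_{H^s}$ in $\mu$ beyond $L^2$ is exactly what creates room for a kernel singularity with $\alpha=\frac{1}{\sigma}+\frac14>\frac{1}{\sigma}$, and it is why the hypothesis $u\in L^\sigma\big(\Omega;X^{s,b}([0,T])\big)$ with $b>0$, rather than merely $u\in L^\sigma(\Omega;L^2_tH^s_x)$, genuinely enters. Until you inject this (or an equivalent) use of $b$, your two constraints on $\alpha$ are incompatible and the argument cannot close.
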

\begin{proof}
Applying the same factorisation procedure as in the proof of Lemma~\ref{cts-stoc-conv-add} 
reduces the problem to proving that the process 
	\[f(t'):=\intud{0}{t'}{(t'-\mu)^{-\alpha}S(t'-\mu)\big[u(\mu)\phi\big]}{W(\mu)}\]
satisfies
	\begin{equation}\label{cts-stoc-conv-mult-ref1}
	\EE\left[\intud{0}{T}{\norm{f(t')}_{H^s_x}^{\sigma}}{t'}\right]\le C'\brac{T,\sigma, C(\phi)} <\infty \,
	\end{equation}
for some $0<\alpha<1$ satisfying $\alpha>\frac{1}{\sigma}$. 
By the Burkholder-Davis-Gundy inequality (Lemma~\ref{BDG}) and Lemma~\ref{lem3p5}, 
we have
	\begin{align*}
	\EE\left[\norm{f(t')}_{H^s_x}^{\sigma}\right]	
	 &\les \EE\left[ \left( \int_0^{t'} \|(t'-\mu)^{-\alpha} S(t'-\mu) [u(\mu)\phi]\|^2_{\L^2(L^2;H^s)} d\mu\right)^{\frac{\sigma}{2}}\right]\\
	 &= \EE\left[  \left( \int_0^{t'} (t'-\mu)^{-2\alpha} 
	  \sum_{j\in\Z^d} \|S(t'-\mu)u(\mu)\phi e_j\|^2_{H^s} d\mu \right)^{\frac{\sigma}{2}}\right]\\
	 &\les \EE\left[\left(  \sum_{j\in\Z^d} \|\phi e_j\|^2_{\F L^{s,r}}  \int_0^{T} (t'-\mu)^{-2\alpha} 
	 \|u(\mu)\|^2_{H^s} d\mu \right)^{\frac{\sigma}{2}}\right]\,.
	\end{align*}
Then, by Fubini theorem and Minkowski inequality, we obtain 
\begin{align*}
\EE\left[\int_0^T \norm{f(t')}_{H^s_x}^{\sigma} dt' \right]	 
 &= \Big\|\, \|f\|_{H^s_x}\Big\|^{\sigma}_{L^\sigma(\Omega; L^\sigma_{t'}[0,T])}\\
 &\les  C(\phi)^{\sigma}\, \bigg\|\, 
 \Big\| (t'-\mu)^{-\alpha} \|u(\mu)\|_{H^s_x}\Big\|_{L^2_{\mu}(0,T])}
 \bigg\|^{\sigma}_{L^\sigma(\Omega; L^\sigma_{t'}[0,T])}\\
 &\le C(\phi)^{\sigma}\, \EE \Bigg[ \bigg\|\, 
 \Big\| (t'-\mu)^{-\alpha} \|u(\mu)\|_{H^s_x}\Big\|_{L^{\sigma}_{t'}(0,T])}
 \bigg\|^{\sigma}_{L_{\mu}^{2}([0,T])}\Bigg]\\
 &\les C(\phi)^{\sigma}\, \EE \Bigg[ \Bigg(\int_0^T (T-\mu)^{2(\frac{1}{\sigma}-\alpha)} \|u(\mu)\|_{H^s_x}^2d\mu 
 \Bigg)^{\frac{\sigma}{2}} \Bigg]
\end{align*}
By H\"{o}lder and Sobolev inequalities and \eqref{Xsblocalsim}, 
we have 
	\begin{align*}
	\Bigg( \int_0^T (T-\mu)^{2(\frac{1}{\sigma}-\alpha)} \|u(\mu)\|^2_{H_x^s} d\mu \Bigg)^{\frac12}
&\le 
	\Big\| (T-\mu)^{\frac{1}{\sigma} - \alpha} \Big\|_{L_{\mu}^{\frac{4}{1+2b}}([0,T])}
	 \Big\| \|u(\mu)\|_{H^s_x}\Big\|_{L_{\mu}^{\frac{4}{1-2b}}([0,T])}\\
	 &\les T^{1+\frac{4}{1+2b}(\frac{1}{\sigma}-\alpha)}  
	  \Big\|\one_{[0,T]}(\mu) \|S(-\mu) u(\mu)\|_{H^s_x} \Big\|_{L_{\mu}^{\frac{4}{1-2b}}}\,.
	 \end{align*}
There exists $\alpha=\alpha(\sigma):=\frac{1}{\sigma}+\frac14$ for which we have 
\begin{align*}
\EE\left[\int_0^T \norm{f(t')}_{H^s_x}^{\sigma} dt' \right]	&\les 
 \EE\Big[ T^{\frac{2b\sigma}{1+2b}} \|u\|^{\sigma}_{X^{s,b}([0,T])} \Big] <\infty \,.
\end{align*} 
\end{proof}

\section{Local well-posedness}
\label{sect:LWP}
\subsection{SNLS with additive noise}
\label{sect:LWPa}
In this subsection, we prove  Theorem~\ref{lwp:add}. 
Let $b=b(k)=\frac{1}{2}-$ be given by Lemma \ref{trilin} (in the case $d=k=1$) 
or by Lemma \ref{multilin} (in the case $dk\ge 2$). 
By Lemma \ref{stoc-conv-est-add}, for any $T>0$, 
there is an event $\Omega'$ of full probability such that 
the stochastic convolution $\Psia$ has finite $X^{s,b}([0,T])$-norm on $\Omega'$.  
	
	Now fix $\omega\in \Omega'$ and $u_0\in H^s(\T^d)$. 
	Consider the ball
\[B_R:=\big\{u\in X^{s,b}([0,T]):\|u\|_{X^{s,b}([0,T])}\le R\big\}\]
	where $0<T<1$ and $R>0$ are to be determined later. 
We aim to  show that the operator $\Lambda$ given by 
$$ \Lambda u(t) 
= S(t) u_0 \pm i\int_0^t S(t-t') \big(|u|^{2k}u\big)(t')dt' - i \Psi(t)\ ,\ t\geq0,\, $$
where $\Psi$ is the additive stochastic convolution given by \eqref{Psia}, is a contraction on $B_R$. 
To this end, it remains to estimate the $X^{s,b}([0,T])$-norm of 
\begin{equation*}
D(u):=\intud{0}{t}{S(t-t')\big(|u|^{2k}u\big)(t')}{t'} \,.
\end{equation*}
For any $\delta>0$ sufficiently small (such that $b+\delta<\frac12$),  
by Lemma \ref{xsb-time-loc}  and  \eqref{Xsblocalsim}:
\begin{align*}
	\left\|D(u)\right\|_{X^{s,b}([0,T])} 
	\lesssim T^{\delta} \left\|D(u)\right\|_{X^{s,b+\delta}([0,T])} 
	\lesssim  T^\delta\left\|\one_{[0,T]}(t) D(u)(t) \right\|_{X^{s,\frac{1}{2}+\delta}}.
\end{align*}
Let $\eta$ be a smooth cut-off function, supported on $[-1, T+1]$, with $\eta(t)=1$ for all $t\in [0,T]$. 
For any $w\in X^{s,-\frac12+\delta}$  that  agrees with 
$|u|^{2k}u$ on $[0,T]$, 
by Lemma  \ref{lem:linests}, we obtain 
\begin{align}
\label{sect4p1estDu1}
 \left\|\one_{[0,T]}(t) D(u)(t)\right\|_{X^{s,\frac12+\delta}} & 
  \lesssim  
	\left\|\eta(t) \int_0^t S(t-t') w(t') dt' \right\|_{X^{s,\frac{1}{2}+\delta}}
\lesssim  \|w\|_{X^{s,-\frac{1}{2}+\delta}}
\end{align}
Then after taking the infimum over all such $w$, we use Lemma \ref{trilin} or \ref{multilin} and we get 
\begin{align}
\label{sect4p1estDu2}
\left\|D(u)\right\|_{X^{s,b}([0,T])} 
	&\lesssim T^\delta\| (u \overline{u})^k u\|_{X^{s,-\frac12+\delta}([0,T])}
\lesssim T^\delta\norm{u}^{2k+1}_{X^{s,b}([0,T])}.
\end{align}
It follows that 
\begin{equation}\label{random4}
	\norm{\Lambda u}_{X^{s,b}([0,T])}\le c \norm{u_0}_{H^s_x}
	 +c T^{\delta} \norm{u}^{2k+1}_{X^{s,b}([0,T])} +\xsbt{\Psia(t)}{s}{b}{[0,T]}, 
\end{equation}
for some $c>0$. 
Similarly, we obtain
\begin{equation}\label{random5}
\norm{\Lambda u-\Lambda v}_{X^{s,b}([0,T])}\le c T^{\delta} \brac{\norm{u}^{2k}_{X^{s,b}([0,T])}
 +\norm{v}^{2k}_{X^{s,b}([0,T])}}\norm{u-v}_{X^{s,b}([0,T])}.
\end{equation}
Let $R:= 2c \norm{u_0}_{H_x^s}+2\xsbt{\Psia(t)}{s}{b}{[0,T]}$. 
From \eqref{random4} and \eqref{random5}, 
we see that $\Lambda$ is a contraction from $B_R$ to $B_R$ provided
\begin{equation}\label{random6}
cT^\delta R^{2k+1}\le\frac{1}{2}R \ \mbox{ and } \  c T^{\delta} \brac{2R^{2k}}\le\frac{1}{2}\,.
\end{equation}
This is always possible if we choose $T\ll 1$ sufficiently small. 
This shows the existence of a unique solution $u\in X^{s,b}([0,T])$ to \eqref{SNLS-mild} on $\Omega'$.

Finally, we check that $u\in C([0,T]; H^s)$ on the set of full probability $\Omega''\cap \Omega'$, 
where $\Omega''$ is given by Lemma \ref{cts-stoc-conv-add}, 
that is $\Psi\in C([0,T];H^s)$ on $\Omega''$. 
By \eqref{Xsblocalsim}, \eqref{sect4p1estDu1}  and Lemma \ref{trilin} or \ref{multilin}, we also get
\begin{equation}
\xsbt{D(u)}{s}{\frac{1}{2}+\delta}{[0,T]}\lesssim \left\|\one_{[0,T]}(t) D(u)(t)\right\|_{X^{s,\frac12+\delta}} 
 \les \xsbt{u}{s}{b}{[0,T]}^{2k+1} .
\end{equation}
By the embedding $X^{s,\frac{1}{2}+\delta}([0,T]) \hookrightarrow C([0,T];H^s(\T^d))$, 
we have $D(u)\in C([0,T];H^s(\T^d))$. 
Since the linear term $S(t)u_0$ also belongs to $C([0,T];H^s(\T^d))$, 
we conclude that 
$$u=\Lambda u\in C\big([0,T];H^s(\T^d)\big) \text{ on } \Omega''\cap \Omega'.$$

\begin{remark}
From \eqref{random6}, we obtain the time of existence
\begin{equation}
\label{time-of-existence-a}
T_{\text{max}}:=\max\bigg\{\tilde{T}>0: 
 \tilde{T}\le c\Big(\norm{u_0}_{H^s}+\norm{\Psi}_{X^{s,b}([0,\tilde{T}])}\Big)^{-\theta}\bigg\}\,,
\end{equation}
where $\theta=\frac{2k}{\delta}$. Note that \eqref{time-of-existence-a} will be useful in our global argument.
\end{remark}

\subsection{SNLS with multiplicative noise}\label{sect:LWPm}
In this subsection, we prove Theorem~\ref{lwp:mult}. 
Following \cite{debouard-debussche-kdv}, 
we use a truncated version of \eqref{SNLS-mild}. 
The main idea is to apply an appropriate cut-off function on the nonlinearity to obtain a family of truncated SNLS, and then prove global well-posedness of these truncated equations. 
Since solutions started with the same initial data coincide up to suitable stopping times, 
we obtain a solution to  the original SNLS in the limit.

Let $\eta:\R\to[0,1]$ be a smooth cut-off function such that $\eta\equiv 1$ on $[0,1]$ and $\eta\equiv 0$ outside $[-1,2]$. 
Set $\eta\sub{R}:=\eta\brac{\frac{\cdot}{R}}$ and  consider the equation
\begin{equation}\label{SNLSm-R}
i\partial_tu\sub{R} -\Delta u\sub{R} \pm \eta_R\big(\xsbt{u_R}{s}{b}{[0,t]}\big)^{2k+1}|{u}\sub{R}|^{2k} u\sub{R} = u_R\cdot\phi\xi\,,
\end{equation}
with initial data $u_R|_{t=0} = u_0$. 
Its mild formulation is $u_R=\Lambda_R u_R$, where $\Lambda_R$ is given by
\begin{align}
\label{SNLS-R-mild}
\Lambda\sub{R} u_R&:= S(t)u_0
\pm i\intud{0}{t}{S(t-t')\eta_R\brac{\xsbt{u_R}{s}{b}{[0,t']}}^{2k+1}|u\sub{R}|^{2k}u\sub{R}(t')}{t'}-i\Psim[u\sub{R}](t)\,.
\end{align}
The key ingredient for Theorem~\ref{lwp:mult} is  the following proposition.

\begin{proposition}[Global well-posedness for \eqref{SNLSm-R}]\label{gwp-mult-R}
Let $s>s_{\text{crit}}$, $s\ge 0$, and $T, R>0$. Suppose that $\phi$ is as in Theorem~\ref{lwp:mult}.  
Given $u_0\in H^s(\T^d)$, there exists $b=\frac{1}{2}-$ 
and a unique adapted process
$$u\sub{R}\in L^2\Big(\Omega;C\big([0,T];H^s(\T^d)\big)\cap X^{s,b}([0,T])\Big)$$
solving \eqref{SNLSm-R} on $[0,T]$.
\end{proposition}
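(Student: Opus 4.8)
The plan is to establish global well-posedness of the truncated equation \eqref{SNLSm-R} via a contraction-mapping argument on the Banach space
\[
\mathcal{X}_T := L^2\Big(\Omega;C\big([0,T];H^s(\T^d)\big)\cap X^{s,b}([0,T])\Big),
\]
and crucially to show that the fixed-point argument closes \emph{globally in time} (for arbitrary $T>0$), not just locally. The reason we can hope for this is that the truncation $\eta_R(\|u_R\|_{X^{s,b}([0,t'])})^{2k+1}$ forces the nonlinearity to be supported where the accumulated norm is $\lesssim R$, so the Lipschitz constant of the nonlinear term becomes controllable independently of the size of the solution. First I would verify that $\Lambda_R$ maps $\mathcal{X}_T$ into itself: for the deterministic Duhamel term I would use Lemma~\ref{xsb-time-loc} to extract a factor $T^{\delta}$, then the embedding \eqref{sect4p1estDu1} together with the multilinear estimate (Lemma~\ref{trilin} when $d=k=1$, otherwise Lemma~\ref{multilin}) to bound it by $\les T^{\delta}\|u_R\|_{X^{s,b}([0,T])}^{2k+1}$; the truncation caps this at a constant times $R^{2k+1}$ on the support of $\eta_R$. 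For the stochastic convolution term $\Psi[u_R]$ I would invoke Lemma~\ref{stoc-conv-est-mult} for the $X^{s,b}$-bound and Lemma~\ref{cts-stoc-conv-mult} for continuity in $H^s$, which together give the expectation bound $\EE\big[\|\Psi[u_R]\|_{\mathcal{X}_T}^2\big]\les C(\phi)^2\,\EE\big[\|u_R\|_{X^{s,b}([0,T])}^2\big]$. The linear term $S(t)u_0$ is handled by Lemma~\ref{lem:linests}.

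The heart of the argument is the contraction estimate, and here the truncation earns its keep. I would estimate $\Lambda_R u - \Lambda_R v$ in $\mathcal{X}_T$ by splitting the difference of the nonlinear Duhamel terms into (a) the difference coming from the cut-off factors $\eta_R(\|u\|_{X^{s,b}([0,t'])})^{2k+1}-\eta_R(\|v\|_{X^{s,b}([0,t'])})^{2k+1}$, which is Lipschitz with a constant controlled by $R$ since $\eta_R$ is smooth with derivative of size $R^{-1}$ and the map $w\mapsto\|w\|_{X^{s,b}([0,t'])}$ is $1$-Lipschitz, and (b) the difference $|u|^{2k}u-|v|^{2k}v$ controlled by the multilinear estimate as in \eqref{random5}, again capped by the truncation at a power of $R$. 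For the stochastic term I would use the \emph{linearity} of $u\mapsto\Psi[u]$ to write $\Psi[u]-\Psi[v]=\Psi[u-v]$, so Lemma~\ref{stoc-conv-est-mult} and Lemma~\ref{cts-stoc-conv-mult} apply directly to $u-v$ and give a bound $\les C(\phi)\,\|u-v\|_{\mathcal{X}_T}$.

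The main obstacle I anticipate is exactly this last stochastic contribution: unlike the deterministic Duhamel term, the bound for $\Psi[u-v]$ carries \emph{no} gain of a positive power of $T$ (the estimate \eqref{stoc-conv-est-mult1} has a factor $(T^2+1)^{\sigma/2}$ that does not vanish as $T\to 0$, and the continuity estimate \eqref{stoc-conv-est-mult2} has no smallness either). Consequently one cannot simply shrink $T$ to make the full map a contraction on a single interval. The standard resolution, which I would follow, is to run the contraction on short subintervals $[0,T_0]$ with $T_0$ chosen so that $C(\phi)T_0^{\delta}<\tfrac12$ (using the time-localization factor $T_0^{2b\sigma/(1+2b)}$ that does appear in the proof of Lemma~\ref{cts-stoc-conv-mult}, giving genuine smallness for the continuity estimate) and a separate small factor from Lemma~\ref{xsb-time-loc} for the deterministic part, obtaining a local solution whose existence time $T_0$ depends only on $C(\phi)$, $R$, and the multilinear constants, but crucially \emph{not} on $\|u_0\|_{H^s}$. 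Because $T_0$ is uniform, I would then iterate: partition $[0,T]$ into finitely many intervals of length $T_0$ and patch the solutions together, using at each step that the $H^s$-valued solution constructed on the previous interval furnishes an admissible (adapted, $L^2(\Omega;H^s)$) initial datum for the next. Uniqueness follows on each subinterval from the contraction and then globally by concatenation, and adaptedness is preserved since each Picard iterate is adapted and adaptedness passes to the $\mathcal{X}_T$-limit; this yields the unique adapted solution $u_R\in\mathcal{X}_T$ for the prescribed arbitrary $T>0$.
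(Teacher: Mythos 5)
Your overall architecture coincides with the paper's: a contraction for $\Lambda_R$ in a space of adapted processes, with a contraction time $T=T(R)$ that is independent of $\|u_0\|_{H^s}$ precisely because the truncation caps the nonlinear contribution, followed by iteration to reach arbitrary $T$, with pathwise continuity recovered via Lemma~\ref{cts-stoc-conv-mult} and the $X^{s,\frac12+}\hookrightarrow C_tH^s$ embedding. (The paper runs the fixed point in $E_T=L^2_{\textup{ad}}(\Omega;X^{s,b}([0,T]))$ and obtains the $C([0,T];H^s)$ regularity a posteriori, rather than contracting directly in the intersection space as you do; this difference is harmless.) One remark on your ``main obstacle'': it is misdiagnosed. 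The right-hand side of \eqref{stoc-conv-est-mult1} is phrased in terms of $\EE\big[\|u\|^\sigma_{L^2([0,T];H^s)}\big]$, not the $X^{s,b}$-norm, and H\"older in time combined with the one-dimensional Sobolev embedding $H^b_t\hookrightarrow L^{2/(1-2b)}_t$ (applied to $S(-t)u(t)$, using \eqref{Xsb-interact-rep}) gives $\|u\|_{L^2([0,T];H^s)}\les T^{b}\|u\|_{X^{s,b}([0,T])}$. So the stochastic Lipschitz constant \emph{does} carry a factor $T^b$ -- this is exactly the $C_3T^b$ and $C_5T^b$ terms in the paper's contraction estimates -- and your subsequent fix ($C(\phi)T_0^\delta<\tfrac12$) tacitly presumes this smallness while your stated obstacle denies it. The conclusion stands, but the reasoning is internally inconsistent.

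The genuine gap is the step where you write that ``the truncation caps this at a constant times $R^{2k+1}$ on the support of $\eta_R$.'' The cutoff $\kappa_R(t):=\eta_R\big(\|u\|_{X^{s,b}([0,t])}\big)$ sits \emph{inside} the Duhamel integral as a time-dependent scalar, and what is actually needed, before any multilinear estimate can be applied, is the multiplier bound
\begin{equation*}
\big\|\kappa_R(t)\,u(t)\big\|_{X^{s,b}([0,T])}\le \min\big\{C_1\|u\|_{X^{s,b}([0,T])},\,C_2(R)\big\},
\end{equation*}
together with its difference analogue. This is not automatic: $\kappa_R$ is merely a continuous, non-smooth function of $t$, and multiplication by rough time-dependent functions is not trivially bounded on $X^{s,b}$ (indeed it fails for $b\ge\tfrac12$; sharp time cutoffs are borderline). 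The paper devotes Lemma~\ref{cutoff-bdd} to exactly this point, using the Sobolev--Slobodeckij characterization \eqref{sobolev-char} of $H^b$, the restriction $b<\tfrac12$, and Hardy's inequality. Your mean-value-theorem observation (that $\eta_R$ has derivative of size $R^{-1}$ and $w\mapsto\|w\|_{X^{s,b}([0,t])}$ is $1$-Lipschitz by the reverse triangle inequality) reproduces only the easy ingredient of that lemma's second estimate \eqref{cutoff-bb2}; without the product bound \eqref{cutoff-bb1}, neither your self-mapping bound $C_2(R)T^\delta$ nor your capped contraction estimate is justified, and the whole scheme -- in particular the independence of $T(R)$ from $\|u_0\|_{H^s}$, which drives the global iteration -- rests on it.
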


Before proving this result, we state and prove the following lemma. 

\begin{lemma}[Boundedness of cut-off]\label{cutoff-bdd}
	Let $s\ge0$, $b\in[0,\frac{1}{2})$, $R>0$ and $T>0$. 
	There exist constants $C_1, C_2(R)>0$ such that
\begin{align}
	\xsbt{\eta_{R}
		\brac{
			\xsbt{u}{s}{b}{[0,t]}
		}
		u(t)
	}{s}{b}{[0,T]}&\le \min\left\{C_1\xsbt{u}{s}{b}{[0,T]}, C_2(R)\right\}\,;\label{cutoff-bb1}
\end{align}
\begin{align}
\xsbt{\eta_{R}\brac{\xsbt{u}{s}{b}{[0,t]}}u(t)
-\eta_{R}\brac{\xsbt{v}{s}{b}{[0,t]}}
v(t)
}{s}{b}{[0,T]}&\le C_2(R)\xsbt{u-v}{s}{b}{[0,T]}\,.\label{cutoff-bb2}
\end{align}
\end{lemma}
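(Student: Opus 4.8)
The plan is to build everything on the monotonicity of the accumulated norm in time. Throughout, write $g_u(t):=\xsbt{u}{s}{b}{[0,t]}$ and $\theta_u(t):=\eta_R(g_u(t))$, and similarly for $v$. Since $[0,t']\subseteq[0,t]$ forces $\xsbt{u}{s}{b}{[0,t']}\le\xsbt{u}{s}{b}{[0,t]}$, the function $g_u$ is non-decreasing (and, as is standard, continuous) on $[0,T]$. After fixing, without loss of generality, a cut-off $\eta$ that is non-increasing on $[1,2]$, the composition $\theta_u=\eta_R\circ g_u$ is a non-increasing function of $t$ with values in $[0,1]$, equal to $1$ while $g_u\le R$ and to $0$ once $g_u\ge 2R$. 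The two tools I would use repeatedly are the equivalence \eqref{Xsblocalsim} (whose constant may be taken uniform over subintervals of $[0,T]$) and the restriction monotonicity $\xsbt{w}{s}{b}{I'}\le\xsbt{w}{s}{b}{I}$ for $I'\subseteq I$.

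For \eqref{cutoff-bb1} I would use the layer-cake representation of the non-increasing $\theta_u$. Writing $\tau_\lambda:=\sup\{t\in[0,T]:\theta_u(t)>\lambda\}$, monotonicity gives $\{\theta_u>\lambda\}=[0,\tau_\lambda)$, whence $\theta_u(t)u(t)=\int_0^1\one_{[0,\tau_\lambda]}(t)\,u(t)\,d\lambda$. By Minkowski's integral inequality and then \eqref{Xsblocalsim},
\[
\xsbt{\theta_u u}{s}{b}{[0,T]}\le\int_0^1\norm{\one_{[0,\tau_\lambda]}u}_{X^{s,b}}\,d\lambda\lesssim\int_0^1\xsbt{u}{s}{b}{[0,\tau_\lambda]}\,d\lambda.
\]
Bounding $\xsbt{u}{s}{b}{[0,\tau_\lambda]}\le\xsbt{u}{s}{b}{[0,T]}$ and integrating yields the bound $C_1\xsbt{u}{s}{b}{[0,T]}$. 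On the other hand, for every $\lambda$ with $\tau_\lambda>0$ one has $\theta_u(\tau_\lambda)>0$, hence $g_u(\tau_\lambda)\le 2R$, i.e. $\xsbt{u}{s}{b}{[0,\tau_\lambda]}\le 2R$; integrating this instead yields the bound $C_2(R)\sim R$. Together these give \eqref{cutoff-bb1}.

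For \eqref{cutoff-bb2} I would first dispose of the case $\rho:=\xsbt{u-v}{s}{b}{[0,T]}\ge 1$, where the claim is immediate from \eqref{cutoff-bb1} and the triangle inequality (after enlarging $C_2(R)$). Assuming $\rho<1$, I would split $\theta_u u-\theta_v v=\theta_u(u-v)+(\theta_u-\theta_v)v$. The first term is handled exactly as in \eqref{cutoff-bb1}, now with multiplier $\theta_u$ and function $u-v$, giving $\lesssim\rho$. For the second term the starting points are the Lipschitz bound $|\theta_u(t)-\theta_v(t)|\le R^{-1}\norm{\eta'}_{L^\infty}|g_u(t)-g_v(t)|$ together with the reverse triangle inequality $|g_u(t)-g_v(t)|\le\xsbt{u-v}{s}{b}{[0,t]}\le\rho$, and the level-set representation
\[
(\theta_u-\theta_v)(t)=\int_R^{2R}\eta_R'(\lambda)\big(\one_{(\sigma^u_\lambda,T]}(t)-\one_{(\sigma^v_\lambda,T]}(t)\big)\,d\lambda,\qquad \sigma^w_\lambda:=\inf\{t:g_w(t)>\lambda\}.
\]
This writes $(\theta_u-\theta_v)v$ as a $\lambda$-superposition of $v$ restricted to the short mismatch intervals $J_\lambda$ with endpoints $\sigma^u_\lambda,\sigma^v_\lambda$. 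On the support of $\eta_R'$ one has $g_u\le 2R$ or $g_v\le 2R$, so $\norm{\one_{J_\lambda}v}_{X^{s,b}}\lesssim\xsbt{v}{s}{b}{J_\lambda}\lesssim R$, while the total mismatch is small, since the area-between-monotone-graphs identity gives $\int_R^{2R}|\eta_R'(\lambda)|\,|\sigma^u_\lambda-\sigma^v_\lambda|\,d\lambda\lesssim R^{-1}\int_0^T|g_u-g_v|\,dt\lesssim R^{-1}T\rho$.

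The main obstacle is precisely this difference term. The difficulty is that $\theta_u-\theta_v$ is small only \emph{pointwise in time}, whereas $X^{s,b}$ with $0\le b<\frac12$ is not stable under multiplication by a merely bounded time function, and a clean Fourier-multiplier bound is unavailable because $\theta_u$, built from the only-continuous $g_u$, lies just outside the required temporal regularity. The pointwise smallness must therefore be converted into the $X^{s,b}$ estimate through the monotone structure. I expect to resolve this by feeding the short length $|J_\lambda|=|\sigma^u_\lambda-\sigma^v_\lambda|$ into the time-localization estimate (Lemma~\ref{xsb-time-loc}) and into \eqref{Xsblocalsim}, and then summing in $\lambda$ via the area bound above together with the uniform control $\xsbt{v}{s}{b}{J_\lambda}\lesssim R$ on the cut-off support; this is the step that produces the factor $\rho=\xsbt{u-v}{s}{b}{[0,T]}$ and the constant $C_2(R)$, and it is where the bulk of the technical work will lie.
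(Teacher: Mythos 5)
Your layer-cake idea is sound where it applies, and it is genuinely different from the paper's argument. The paper never decomposes the cut-off into indicators: it passes to the interaction representation, writes $\xsbt{\kappa_R u}{s}{b}{[0,T]}^2\sim\sum_n\jb{n}^{2s}\norm{\kappa_R(t)w(t,n)}^2_{H^b(0,T\wedge\tau_R)}$ with $w(t,n)=\F_x[S(-t)u(t)](n)$, and estimates the $H^b$-norms through the Sobolev--Slobodeckij characterization \eqref{sobolev-char}, splitting the increments into $\kappa_R(t)(w(t,n)-w(t',n))$ and $(\kappa_R(t)-\kappa_R(t'))w(t',n)$; the latter is handled by the mean value theorem, a Fubini rearrangement, and Hardy's inequality (this is where $b<\frac12$ enters for the paper). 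Your observation that $g_u(t)=\xsbt{u}{s}{b}{[0,t]}$ is non-decreasing, so that $\theta_u=\eta_R\circ g_u$ (for $\eta$ normalized monotone on $[0,\infty)$ -- a harmless extra assumption, since only $g_u\ge0$ is fed into $\eta_R$, though the paper does not impose it) is an average of indicators $\one_{[0,\tau_\lambda]}$, combined with Minkowski and the uniformity in the interval of the constant in \eqref{Xsblocalsim} for $0\le b<\frac12$, proves \eqref{cutoff-bb1} more cleanly than the paper's computation; and because the layer-cake decouples the monotone multiplier from the function it multiplies, it also disposes of the term $\eta_R\brac{\xsbt{u}{s}{b}{[0,t]}}(u-v)$ in \eqref{cutoff-bb2} (the paper's term $B$) with constant independent of $R$.

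The gap is exactly where you flagged it, and the repair you propose does not close it. For $(\theta_u-\theta_v)v$, your superposition gives, after Minkowski, $\int_R^{2R}|\eta_R'(\lambda)|\,\xsbt{v}{s}{b}{J_\lambda}\,d\lambda$, and you must convert the smallness of the mismatch lengths $|J_\lambda|$ into smallness of $\xsbt{v}{s}{b}{J_\lambda}$ \emph{in the same norm}. Lemma~\ref{xsb-time-loc} cannot do this: it gains the factor $T^{b-b'}$ only by lowering the modulation exponent to $b'<b$, so it yields smallness in $X^{s,b'}(J_\lambda)$, whereas \eqref{cutoff-bb2} is an estimate in $X^{s,b}([0,T])$. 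Nor can any estimate of the form $\xsbt{v}{s}{b}{J}\les|J|^{\theta}\xsbt{v}{s}{b}{[0,T]}$ with $\theta>0$ hold for $v$ merely in $X^{s,b}$: if $v$ is supported in time in $J$, then $\xsbt{v}{s}{b}{J}\sim\xsbt{v}{s}{b}{[0,T]}$ by \eqref{Xsblocalsim}, however small $|J|$ is. Consequently your two ingredients -- the uniform bound $\xsbt{v}{s}{b}{J_\lambda}\les R$ and the integrated length bound $\int_R^{2R}|\eta_R'(\lambda)||J_\lambda|\,d\lambda\les T\rho/R$ -- cannot produce any positive power of $\rho=\xsbt{u-v}{s}{b}{[0,T]}$, let alone the Lipschitz bound $C_2(R)\rho$ that \eqref{cutoff-bb2} asserts and that the contraction in Proposition~\ref{gwp-mult-R} requires. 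The structural reason is that Minkowski discards all information about the oscillation in $t$ of $\theta_u-\theta_v$; the pointwise smallness $|\theta_u(t)-\theta_v(t)|\le R^{-1}\norm{\eta'}_{L^\infty}\rho$ (your reverse-triangle step, which is fine) must instead be injected inside a characterization of the temporal regularity. That is what the paper does: its term $A$ is estimated within the same $H^b$-Slobodeckij framework set up for \eqref{cutoff-bb1}, where the mean value theorem supplies the factor $R^{-1}\xsbt{u-v}{s}{b}{[0,t]}$ pointwise in $t$ and the increments of the cut-offs are absorbed by the Fubini--Hardy computation. To complete your proof, keep the layer-cake for \eqref{cutoff-bb1} and for the term $\theta_u(u-v)$, but abandon the indicator superposition for $(\theta_u-\theta_v)v$ and run the argument via \eqref{sobolev-char} as in the paper.
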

\begin{proof}
We first prove \eqref{cutoff-bb1}. Let $w(t,n)=\F_x[S(-t)u(t)](n)$, $\kappa\sub{R}(t)=\eta\sub{R}
\brac{\xsbt{u}{s}{b}{[0,t]}}$ and
\begin{equation}\label{tauR}
	\tau\sub{R}:=\inf\left\{t\ge 0: \xsbt{u}{s}{b}{[0,t]}\ge 2R\right\}\,.
\end{equation}
Then $\kappa\sub{R}(t)=0$ when $t>\tau\sub{R}$. By \eqref{Xsblocalsim} and \eqref{Xsb-interact-rep}, 
\begin{align}
\xsbt{\kappa\sub{R}(t)
	u(t)
}{s}{b}{[0,T]}^2
&\sim
\xsb{\one_{[0,T\wedge\tau\sub{R}]}\kappa\sub{R}(t)u(t)}{s}{b}^2
\sim \xsbt{\kappa\sub{R}(t)u(t)
}{s}{b}{[0,T\wedge\tau\sub{R}]}^2 \notag\\
&\sim\sum_{n\in\Z^d}\jb{n}^{2s}\norm{\kappa\sub{R}(t)
		w(t,n)}^2_{H^b(0,T\wedge\tau\sub{R})}. 
		\label{random7}
\end{align}
We now estimate the $H^b(0,T\wedge\tau\sub{R})$-norm, for which we use the following characterization (see for example \cite{tartar-sobolev}):
\begin{align}
	\norm{f}_{H^b(a_1,a_2)}^2\sim\norm{f}_{L^2(a_1,a_2)}^2+
	\intud{a_1}{a_2}{
		\intud{a_1}{a_2}{\frac{|f(x)-f(y)|^2}{|x-y|^{1+2b}}}{x}
	}{y}\ , \quad 0<b<1. 
	\label{sobolev-char}
\end{align}
For the inhomogeneous contribution (i.e. coming from the $L^2$-norm above), 
we have
\begin{align*}
\sum_{n\in\Z^d}\jb{n}^{2s}\norm{\kappa\sub{R}(t)w(t,n)}^2_{L^2_{t}(0,T\wedge\tau\sub{R})}
 &\le \min\left\{\xsbt{u}{s}{b}{[0,\tau\sub{R}]}^2, \xsbt{u}{s}{b}{[0,T]}^2\right\}\\
& \le \min\left\{\brac{2R}^2, \xsbt{u}{s}{b}{[0,T]}^2\right\}.
\end{align*}
The remaining part of \eqref{random7} needs a bit more work. Fix $n\in\Z^d$, then
\begin{align*}
\longeqn
\intud{0}{T\wedge\tau\sub{R}}{
	\intud{0}{T\wedge\tau\sub{R}}{\frac{|\kappa\sub{R}(t)w(t,n)
			-\kappa\sub{R}(t')w(t',n)|^2
		}{|t-t'|^{1+2b}}}{t'}
}{t}\\
&\les\intud{0}{T\wedge\tau\sub{R}}{
	\intud{0}{t}{\frac{|\kappa\sub{R}(t)(w(t,n)
			-w(t',n))|^2
		}{|t-t'|^{1+2b}}}{t'}
}{t}\\
&\quad\quad\quad+\intud{0}{T\wedge\tau\sub{R}}{
	\intud{0}{t}{\frac{|(\kappa\sub{R}(t)-\kappa\sub{R}(t'))w(t',n)|^2
		}{|t-t'|^{1+2b}}}{t'}
}{t}\\
&=: \Irm(n)+\IIrm(n)\,.
\end{align*}
It is clear that  
$$\Irm(n)\lesssim \min\left\{\norm{w(n)}_{H^b((0,\tau\sub{R}))}^2, \norm{w(n)}_{H^b((0,T))}^2 \right\}\,,$$
and hence
\[\sum_{n \in\Z^d}\Irm(n)\les \min\left\{\brac{2R}^2, \xsbt{u}{s}{b}{[0,T]}^2\right\}.\] 
For $\IIrm(n)$, the mean value theorem infers that
\begin{align*}
\left|\kappa_R(t)-\kappa_R(t')\right|^2
&\lesssim\frac{\left(\xsbt{u}{s}{b}{[0,t]}-\xsbt{u}{s}{b}{[0,t']}\right)^2}{R^2}\left(\sup_{r\in\R}\eta'(r)\right)^2\\
&\les \frac{\xsb{\one_{[t',t]}u}{s}{b}^2}{R^2}\\
&\lesssim \frac{1}{R^2}\sum_{n'\in\Z^d}\jb{n'}^{2s}\|{w(\cdot,n')}\|^2_{H^b(t',t)} .
\end{align*}
Again, we split $\|{w(\cdot,n')}\|^2_{H^b(t',t)}$ using \eqref{sobolev-char} 
into the inhomogeneous contribution 
(the $L^2$-norm squared part) and the homogeneous contribution (the second term of \eqref{sobolev-char}). 
We  control here only the homogeneous contributions for $\IIrm(n)$ as the inhomogeneous contributions are easier. 
The homogeneous part of  $\IIrm(n)$ is controlled by 
\begin{align}
\longeqn
\frac{1}{R^2}\sum_{n'\in\Z^d}\jb{n'}^{2s}
\intud{0}{T\wedge\tau\sub{R}}{
	\intud{0}{t}{
		\intud{t'}{t}{
			\intud{t'}{\lambda}{
				\frac{|w(t',n)|^2}{|t-t'|^{1+2b}}\cdot\frac{|w(\lambda,n')-w(\lambda',n')|^2}{|\lambda-\lambda'|^{1+2b}}				
			}{\lambda'}
		}{\lambda}
	}{t'}
}{t}\\
&=
\frac{1}{R^2}\sum_{n'\in\Z^d}\jb{n'}^{2s}
\int_0^{T\wedge\tau\sub{R}}\,\int^\lambda_0\,\int^{\lambda'}_0\,\brac{\intud{\lambda}{T\wedge\tau\sub{R}}{\frac{1}{|t-t'|^{1+2b}}}{t}}|w(t',n)|^2\notag\\
&\hspace{5.4cm}\times\frac{|w(\lambda,n')-w(\lambda',n')|^2}{|\lambda-\lambda'|^{1+2b}}\,dt'\,d\lambda'\,d\lambda\,,\label{random8}
\end{align}
where we used $0\le t'\le \lambda'\le \lambda\le t\le T\wedge \tau\sub{R}$ to switch the integrals. Now, the integral with respect to $t$ is equal to $|T\wedge\tau\sub{R}-t'|^{-2b}-|\lambda-t'|^{-2b}$, which is bounded by
\[|T\wedge\tau\sub{R}-t'|^{-2b}\le |\lambda'-t'|^{-2b}\,.\]
Thus \eqref{random8} is controlled by
\begin{align}
\frac{1}{R^2}\sum_{n'\in\Z^d}\jb{n'}^{2s}
&\int_{0}^{T\wedge\tau\sub{R}}\,
\int_{0}^{\lambda}\,
\brac{\intud{0}{\lambda'}{
		|\lambda'-t'|^{-2b}
		|w(t',n)|^2
	}{t'}}\notag\\
&\hspace{1.8cm}\times\frac{|w(\lambda,n')-w(\lambda',n')|^2}{|\lambda-\lambda'|^{1+2b}}
{\,d\lambda'}
{\,d\lambda}\,.\label{random9}
\end{align}
Since $b\in\left[0,\frac{1}{2}\right)$, 
by Hardy's inequality (see for example \cite[Lemma A.2]{tao-book}) the $t'$-integral is 
$\les \norm{w(\cdot,n)}_{H^b(0,\lambda')}^2\le \norm{w(\cdot,n)}_{H^b(0,T\wedge \tau\sub{R})}^2$. After multiplying by $\jb{n}^{2s}$ and summing over $n\in\Z^d$, we see that (\ref{random9}) is controlled by
\begin{align*}
&\frac{1}{R^2}\sum_{n,n'\in\Z^d}\jb{n}^{2s}\jb{n'}^{2s}\norm{w(\cdot,n)}_{H^b(0,T\wedge\tau\sub{R})}^2\norm{w(\cdot,n)}_{H^b_\lambda(0,T\wedge\tau\sub{R})}^2\\
&\hspace{2cm}\lesssim \frac{1}{R^2}\xsbt{u}{s}{b}{[0,T\wedge\tau\sub{R}]}^2\xsbt{u}{s}{b}{[0,T\wedge\tau\sub{R}]}^2\\
&\hspace{2cm}\le \min\left\{4\xsbt{u}{s}{b}{[0,T]}^2,16R^2\right\}\,.
\end{align*}

We now prove \eqref{cutoff-bb2}. Let $\tau^u_R$ and $\tau^v_R$ be defined as in \eqref{tauR}. Assume without loss of generality that $\tau^u_R\le\tau^v_R$. We decompose
\begin{align*}
	\text{LHS of }\eqref{cutoff-bb2}
	&\les
\xsbt{\brac{\eta_{R}\brac{\xsbt{u}{s}{b}{[0,t]}}
-\eta_{R}\brac{\xsbt{v}{s}{b}{[0,t]}}}
v(t)}{s}{b}{[0,T]}\\
	&\quad\quad\quad+\xsbt{\eta_{R}\brac{\xsbt{u}{s}{b}{[0,t]}}\brac{u(t)-v(t)}
	}{s}{b}{[0,T]}\\
&=:A+B\,.
\end{align*}
By the mean value theorem,
\begin{align*}
A&= \xsbt{\brac{\eta_{R}\brac{\xsbt{u}{s}{b}{[0,t]}}
		-\eta_{R}\brac{\xsbt{v}{s}{b}{[0,t]}}}
	v(t)}{s}{b}{[0,T\wedge\tau_R^v]}\\
&\les \frac{1}{R}\xsbt{v}{s}{b}{[0,T\wedge\tau^v_R]}\xsbt{u-v}{s}{b}{[0,T]}\\
&\les \xsbt{u-v}{s}{b}{[0,T]}\,.
\end{align*}
For $B$, one runs through the same argument as for \eqref{cutoff-bb1} 
but with $w(t,n)$ replaced by $\F_x\big[S(-t)\big(u(t)-v(t)\big)\big](n)$, which yields
\[B\les C(R)\xsbt{u-v}{s}{b}{[0,T]}\,.\qedhere\]
\end{proof}
We now conclude the proof of Proposition \ref{gwp-mult-R}.
\begin{proof}[Proof of Proposition \ref{gwp-mult-R}]
Let $T,R>0$ and let $E\sub{T}:=L_{\textup{ad}}^2\brac{\Omega;X^{s,b}([0,T])}$ be the space of adapted processes in $L^2\brac{\Omega;X^{s,b}([0,T])}$. 
We solve the fixed point problem \eqref{SNLS-R-mild} in $E_T$. Arguing as in the additive case, and using Lemmata  \ref{cutoff-bdd} and \ref{stoc-conv-est-mult}, we have
\begin{align*}
\norm{\Lambda\sub{R} u}_{E_T}&\le C_1 \norm{u_0}_{H^s}+C_2(R)T^{\delta} +C_3T^{b}\norm{u}_{E_T}\,;\\
\norm{\Lambda\sub{R} u-\Lambda\sub{R} v}_{E_T}&\le C_4(R) T^\delta\norm{u-v}_{E_T}+C_5T^b\norm{u-v}_{E_T}\,.
\end{align*}
Therefore, $\Lambda\sub{R}$ is a contraction from $E_T$ to $E_T$ provided we choose $T=T(R)$ sufficiently small. Thus there exists a unique solution $u_R\in E_T$. Note that $T$ does not depend on $\norm{u_0}_{H^s}$, hence we may iterate this argument to extend $u_R(t)$ to all $t\in [0,\infty)$.

Finally, to see that 
$u\sub{R}\in F_T:=L^2\big(\Omega; C([0,T];H^s(\T^d))\big)$, we first note that since $u_R\in E_T$, Lemma \ref{cts-stoc-conv-mult} infers that $\Psi[u_R]\in F_T$. Then, by similar argument as in the end of Subsection \ref{sect:LWPa}, we have that $D(u_R)\in L^2(\Omega;X^{s,\frac{1}{2}+}\big([0,T]\big))$, where
\[D(u_R)(t):=\int_0^t S(t-t')\big(|u_R|^{2k}u_R\big)\,dt'\,.\]
Since $L^2\big(\Omega; X^{s,\frac12+}([0,T])\big)\hookrightarrow F_T$, we have $D(u_R)\in F_T$. 
Also, it is clear that $S(t)u_0\in F_T$. Hence $u_R\in F_T$.
\end{proof}

\begin{proof}[Proof of Theorem~\ref{lwp:mult}]
Let
\begin{equation}
\label{defn:tauR4p17}
	\tau\sub{R}:=\inf\big\{t>0:\xsbt{u\sub{R}}{s}{b}{[0,t]}\ge R\big\}.
\end{equation}
Then, $\eta\sub{R}(\xsbt{u\sub{R}}{s}{b}{[0,t]})=1$ if and only if $t\le\tau\sub{R}$. 
Hence $u\sub{R}$ is a solution of \eqref{SNLS-mild} on $[0,\tau\sub{R}]$. 
For any $\delta>0$, we have  $u\sub{R}(t)=u\sub{R+\delta}(t)$ whenever $t\in[0,\tau\sub{R}]$. Consequently, $\tau\sub{R}$ is increasing in $R$. Indeed, if $\tau\sub{R}>\tau\sub{R+\delta}$ for some $R>0$ and some $\delta>0$, then for $t\in[\tau\sub{R+\delta},\tau\sub{R}]$, 
we have $\eta\sub{R+\delta}\big(\xsbt{u\sub{R+\delta}}{s}{b}{[0,t]}\big)<1$ which implies that $u\sub{R}(t)\ne u\sub{R+\delta}(t)$, a contradiction. Therefore, 
\begin{equation}
	\tau^*:=\lim_{R\to\infty}\tau\sub{R}
\end{equation}
is a well-defined stopping time that is either positive or infinite almost surely. By defining $u(t):=u\sub{R}(t)$ for each $t\in[0,\tau\sub{R}]$, we see that $u$ is a solution of \eqref{SNLS-mild} on $[0,\tau^*)$ almost surely.
\end{proof}

\section{Global well-posedness}
\label{sect:GWP}
In this section, we prove Theorems \ref{gwp:add} and \ref{gwp:mult}. 
Recall that the \emph{mass} and \emph{energy} of a solution $u(t)$ 
of the defocusing 
\eqref{SNLS} are  given respectively by
\begin{align}
M(u(t))&=\intd{\T^d}{\frac12 |u(t,x)|^2}{x} , \label{mass}\\
E(u(t))&=\int_{\T^d} \frac{1}{2} |\nabla u(t,x)|^2 +\frac{1}{2(k+1)} |u(t,x)|^{2(k+1)} dx . \label{energy}
\end{align}
It is well-known that these are conserved quantities for (smooth enough) 
solutions of the deterministic NLS equation.

For SNLS, we prove probabilistic a priori control 
as per Propositions~\ref{apriori-est-M-a} 
and \ref{apriori-est-M-m} below. 
To this purpose, 
the idea is to compute the stochastic differentials of \eqref{mass} and \eqref{energy} 
and use the stochastic equation for  $u$. 
We  work with the following frequency truncated version of \eqref{SNLS}: 
\begin{equation}
\label{SNLS_N}
\begin{cases}
i\partial_t u^N - \Delta u^N  \pm P_{\leq N} |u^N|^{2k}u^N = F(u^N,\phi^N dW^N),\\
u^N|_{t=0} = P_{\le N} u_0=: u_0^N
\end{cases}
\end{equation}
where $P_{\leq N}$ is the Littlewood-Paley projection onto the frequency set 
$\{n\in\Z^d:|n|\le N\}$, 
$$\phi^N:= P_{\leq N}\circ \phi\ \text{ and }\ W^N(t):= \sum_{|n|\leq N} \beta_n(t) e_n.$$

By repeating the arguments in Section~\ref{sect:LWP}, 
one obtains local well-posedness for \eqref{SNLS_N} with initial data $P_{\le N}u_0$ at least with the same time of existence as for the untruncated SNLS.

\subsection{SNLS with additive noise}
We treat the additive SNLS in this subsection. We first prove probabilistic a priori bounds on \eqref{mass} and \eqref{energy} of a solution $u^N$ of the truncated equation.

\begin{proposition}\label{apriori-est-M-a}
	Let $m\in\NN$, $T_0>0$,  $\phi\in\L^2(L^2(\T^d);L^2(\T^d))$, and $F(u,\phi\xi)=\phi\xi$. 
	Suppose that  $u^N(t)$ is a solution to \eqref{SNLS_N} for $t\in[0,T]$,  
	for some stopping time $T\in[0,T_0]$. 
	Then  there exists a constant 
	$C_1=C_1(m,M(u_0), T_0, \|\phi\|_{\L^2(L^2;L^2)})>0$ such that
	\begin{align}
	\label{l2-est-a}
	\EE\left[\sup_{0\le t\le T} M(u^N(t))^m \right] 
	\le C_1\,.
	\end{align}
	Furthermore, if \eqref{SNLS_N} is defocusing,
	 there  exists $C_2=C_2(m,E(u_0), T_0, \|\phi\|_{\L^2(L^2;H^1)})>0$ such that	
	\begin{align}
	\label{h1-est-a}\EE\left[\sup_{0\le t\le T} E(u^N(t))^m \right] 
	&\le C_2\,.
	\end{align}
	The constants $C_1$ and $C_2$ are independent of $N$.
\end{proposition}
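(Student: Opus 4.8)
The plan is to run Itô's lemma on the mass and energy functionals and then close the resulting moment bounds using the Burkholder--Davis--Gundy inequality (Lemma~\ref{BDG}) together with a Gronwall argument, following the scheme of de Bouard and Debussche. The essential simplification is that \eqref{SNLS_N} only involves the finitely many Fourier modes $\{\ft{u^N}(n)\}_{|n|\le N}$ and the finite-dimensional noise $W^N=\sum_{|n|\le N}\beta_n e_n$, so the functionals $M$ and $E$ restricted to $P_{\le N}L^2(\T^d)$ are smooth (polynomial) functions of finitely many coordinates, and the finite-dimensional Itô formula applies rigorously. Throughout, all constants will be uniform in $N$ since $\|\phi^N\|_{\L^2}\le\|\phi\|_{\L^2}$ and, by $\|P_{\le N}u_0\|_{H^1}\le\|u_0\|_{H^1}$ and the Sobolev embedding $H^1\embeds L^{2k+2}$, we have $M(u_0^N)\le M(u_0)$ and $E(u_0^N)\les M(u_0)+E(u_0)$.

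First I would treat the mass. Writing the Itô differential of $M(u^N)=\tfrac12\|u^N\|_{L^2}^2$, the drifts from the linear propagator and from the truncated nonlinearity both vanish: $\Re\jb{u^N,-i\Dl u^N}=0$, and $\Re\jb{u^N,\pm i P_{\le N}(|u^N|^{2k}u^N)}=0$ since $u^N=P_{\le N}u^N$ and $P_{\le N}$ is self-adjoint, so $\jb{u^N,P_{\le N}(|u^N|^{2k}u^N)}=\int_{\T^d}|u^N|^{2k+2}$ is real. Only the Itô correction $\tfrac12\|\phi^N\|_{\L^2(L^2;L^2)}^2\,dt$ and the martingale $\int_0^t\Re\jb{u^N,-i\phi^N\,dW^N}$ remain. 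The quadratic variation of this martingale is $\les\int_0^t\|u^N\|_{L^2}^2\,\|\phi\|_{\L^2(L^2;L^2)}^2\,dt'\les\|\phi\|_{\L^2(L^2;L^2)}^2\int_0^t M(u^N)\,dt'$, so Lemma~\ref{BDG} bounds its $m$-th moment by $\les\|\phi\|^m T_0^{m/2}\,\EE[(\sup_t M)^{m/2}]$; a Young inequality absorbs this into $\tfrac12\EE[\sup_t M^m]$ plus a constant, yielding \eqref{l2-est-a}.

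For the energy the computation is heavier but the same cancellation occurs. With $E'(u^N)=-\Dl u^N+|u^N|^{2k}u^N$ and the truncated force $-i(-\Dl u^N+P_{\le N}(|u^N|^{2k}u^N))$, the deterministic drift in $dE(u^N)$ reduces (after using $-\Dl u^N=P_{\le N}(-\Dl u^N)$ and self-adjointness of $P_{\le N}$) to $2\Re\jb{-\Dl u^N,|u^N|^{2k}u^N}$, which is real, so its imaginary part vanishes; this is precisely the statement that the frequency-truncated flow stays Hamiltonian for $E$. What survives is the Itô correction plus a martingale. The correction splits into the quadratic (gradient) part, contributing the harmless constant $\tfrac12\|\phi^N\|_{\L^2(L^2;\dot{H}^1)}^2\le\tfrac12\|\phi\|_{\L^2(L^2;H^1)}^2$, and the potential part, bounded by $\sum_n\int_{\T^d}|u^N|^{2k}|\phi^N e_n|^2\,dx=\int_{\T^d}|u^N|^{2k}\big(\sum_n|\phi e_n|^2\big)\,dx$. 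The crucial point is to pair $|u^N|^{2k}\in L^{(k+1)/k}$ with $\sum_n|\phi e_n|^2\in L^{k+1}$; by H\"older this is $\les\|u^N\|_{L^{2k+2}}^{2k}\|\phi\|_{\L^2(L^2;H^1)}^2$, where $\sum_n|\phi e_n|^2\in L^{k+1}$ follows from $\sum_n\|\phi e_n\|_{L^{2k+2}}^2\les\|\phi\|_{\L^2(L^2;H^1)}^2$ via the \emph{energy-subcritical} embedding $H^1\embeds L^{2k+2}$. Since $\|u^N\|_{L^{2k+2}}^{2k+2}\le(2k+2)E(u^N)$, this correction is $\les E(u^N)^{k/(k+1)}\|\phi\|_{\L^2(L^2;H^1)}^2$, i.e.\ \emph{sublinear} in $E$.

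Finally, the energy martingale $\int_0^t\Re\jb{E'(u^N),-i\phi^N\,dW^N}$ has quadratic variation $\les\int_0^t\big(\|\nb u^N\|_{L^2}^2+\|u^N\|_{L^{2k+2}}^{2(2k+1)}\big)\|\phi\|_{\L^2(L^2;H^1)}^2\,dt'$, where the second term is $\les E^{(2k+1)/(k+1)}$ with exponent $(2k+1)/(k+1)<2$; Lemma~\ref{BDG} together with Young then absorbs its contribution into $\tfrac14\EE[\sup_t E^m]$, since every resulting power of $\sup_t E$ is strictly below $m$. Collecting these bounds, raising to the $m$-th power, and using $x^{k/(k+1)}\le 1+x$ leaves $\EE[\sup_{s\le t}E^m]\le C_0+C\int_0^t\EE[\sup_{s\le t'}E^m]\,dt'$, and Gronwall closes \eqref{h1-est-a} with $C_2$ depending only on $m$, $E(u_0)$, $T_0$, $\|\phi\|_{\L^2(L^2;H^1)}$. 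I expect the main obstacle to be exactly this energy estimate: one must exploit energy-subcriticality to obtain sublinear dependence of both the Itô correction and the martingale bracket on $E$, and one must first justify that $\EE[\sup_t E^m]$ is finite before the absorption steps are legitimate, which I would arrange by a stopping-time localization at the levels $\{E=R\}$ and then sending $R\to\infty$ by monotone convergence.
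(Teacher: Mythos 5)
Your proposal is correct and matches the paper's proof in all essentials: Itô's formula for the frequency-truncated flow, the Burkholder--Davis--Gundy inequality (Lemma~\ref{BDG}), the energy-subcritical embedding $H^1(\T^d)\embeds L^{2k+2}(\T^d)$ to bound the nonlinear pairings $\int_{\T^d}|u^N|^{2k}u^N\,\phi^N e_j\,dx$ by a strictly sublinear power of $E(u^N)$ (the paper phrases this as $H^{-1}$--$H^1$ duality with the dual Sobolev embedding, giving $E(u^N)^{\frac{2k+1}{2k+2}}\|\phi e_j\|_{H^1}$, which is the same estimate as yours), and a stopping-time localization at level $R$ with monotone convergence to legitimize absorbing $\EE\big[\sup_t M^m\big]$ and $\EE\big[\sup_t E^m\big]$ into the left-hand side --- the same absorption issue arises already for the mass, so your localization remark applies there too, exactly as in the paper's passage from \eqref{apriori-est-a-M1} to \eqref{apriori-est-M-fin}. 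The only difference is bookkeeping: you apply Itô to $M$ and $E$ themselves and then close the $m$-th moments via Young and Gronwall, whereas the paper applies Itô directly to $M^m$ and $E^m$ (producing the extra $m(m-1)$ bracket terms \eqref{mass-a2} and \eqref{energy-a5}) and closes by induction on $m$; both routes are valid and yield constants independent of $N$.
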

\begin{proof} 
By applying It\^o's Lemma, we have
\begin{align}
	M(u^N(t))^m&=M(u_0^N)^m\notag\\
	&\phantom{=}+
	m\,\Im\brac{\sum_{|j|\le N}\int_{0}^{t}{M(u^N(t'))^{m-1}\int_{\T^d}\cj{u^N(t')}\phi^Ne_j\,dx}{\,d\beta_j(t')}}\label{mass-a1}\\
	&\phantom{=}+m(m-1)\sum_{|j|\le N}\int_{0}^{t}{M(u^N(t'))^{m-2}\left|\int_{\T^d}u^N(t')\phi^Ne_j\,dx\right|^2}{\,dt'}\,.\label{mass-a2}\\
	&\phantom{=}+m\norm{\phi^N}^2_{\L^2(L^2;L^2)}\int_0^t{M(u^N(t'))^{m-1}}{\,dt'}\label{mass-a3},
	\end{align}
the last term being the It\^o correction term. 
We first control \eqref{mass-a1}. 
By Burkholder-Davis-Gundy inequality (Lemma~\ref{BDG}),  
H\"older and Young  inequalities, we get
\begin{align*}
\EE\left[\supT\eqref{mass-a1}\right] 
&\lesssim_m \EE\left[  \left\{\sum_{|j|\leq N}\int_0^T M(u^N(t'))^{2(m-1)} \|u^N(t')\|_{L^2}^2 \|\phi^Ne_j\|_{L^2}^2 dt' \right\}^\frac12 \right]\\
&\lesssim {\|\phi^N\|}_{\L^2(L^2;L^2)} \, \EE\left[ \left\{ \int_0^T M(u^N(t))^{2m-1}dt \right\}^\frac12 \right]\\
&\lesssim{\|\phi\|}_{\L^2(L^2;L^2)} T^\frac12\, \EE\left[  \left\{\sup_{t\in[0,T]} M(u^N(t))^{m-1} \right\}^\frac12 
    \left\{\sup_{t\in[0,T]} M(u^N(t))^{m} \right\}^\frac12 \right]\\
&\lesssim{\|\phi\|}_{\L^2(L^2;L^2)} T_0^\frac12 \left\{ \EE\left[ \sup_{t\in[0,T]} M(u^N(t))^{m-1} \right]\right\}^\frac12 
\left\{\EE\left[ \sup_{t\in[0,T]} M(u^N(t))^{m} \right]\right\}^\frac12
\end{align*}
Hence by Young's inequality, we infer that
\begin{align*}
\EE\left[\supT\eqref{mass-a1}\right]
\leq C_m{\|\phi\|}^2_{\L^2(L^2;L^2)} T_0\, \EE\left[ \sup_{t\in[0,T]} M(u^N(t))^{m-1} \right] 
 + \frac12\,\EE\left[ \sup_{t\in[0,T]} M(u^N(t))^{m} \right].
\end{align*}
In a straightforward way, we also have 
\begin{align*}
\EE\left[\supT\eqref{mass-a2}\right] 
&\leq m(m-1) {\|\phi\|}^2_{\L^2(L^2;L^2)} T_0\, \EE\left[ \sup_{t\in[0,T]} M(u^N(t))^{m-1} \right] ,\\
\EE\left[\supT\eqref{mass-a3}\right] 
&\leq 2m {\|\phi\|}^2_{\L^2(L^2;L^2)} T_0 \,\EE\left[ \sup_{t\in[0,T]} M(u^N(t))^{m-1} \right] .
\end{align*}
	Therefore, there is some $C_m>0$ such that 
	\begin{gather}
	\label{apriori-est-a-M1}
	\begin{split}
	\EE\left[\supT M(u^N(t))^m\right]&\le M(u_0)^m
	+ C_mT_0 \,\EE\left[\supT M(u^N(t))^{m-1}\right]\\ 
	&\qquad +\frac{1}{2}\,\EE\left[\supT M(u^N(t))^m\right] .
	\end{split}
	\end{gather}
We now wish to move the last term of \eqref{apriori-est-a-M1} to the left-hand side. However, we do not know a priori that the moments of $\supT M(u^N(t))$ are finite. To justify this, we note that \eqref{apriori-est-a-M1} holds with $T$ replaced by $T_R$, where 
$$T_R:= \sup\left\{t\in [0,T] : M(u^N(t))\leq R\right\},\quad R>0.$$
Now the terms that would be appearing in \eqref{apriori-est-a-M1} are finite 
and hence the formal manipulation is justified. 
Note that $T_R\to T$ almost surely as $R\to\infty$ because $u$ (and hence $u^N$) belongs in $C([0,T];H^s(\T^d))$ almost surely. Hence by letting $R\to\infty$ and invoking the monotone convergence theorem, one finds
\begin{equation}
\label{apriori-est-M-fin}
\EE\left[\supT M(u^N(t))^m\right]\le 2M(u_0)^m
	+ 2C_mT_0 \,\EE\left[\supT M(u^N(t))^{m-1}\right] .
\end{equation}
Hence, by induction on $m$, we obtain
	\begin{equation}\label{mass-est-a}
	\EE\left[\supT M(u^N(t))^m\right]\lesssim 1\,,
	\end{equation}
	where we note that the implicit constant is independent of $N$. 
	
	We now turn to estimating the energy. Applying It\^o's Lemma again, 
	we find that $E(u^N(t))^m$ equals 
	\begin{align}
	&E(u^N_0)^m\label{energy-a0}\\
	&\phantom{=}+ m\,\Im\brac{\sum_{|j|\leq N}\int_{0}^{t}{
			E(u^N(t'))^{m-1} \int_{\T^d} {|u^N|^{2k}u^N\phi^N e_j}{\,dx}
		}{\,d\beta_j(t')}} \label{energy-a1}\\
	&\phantom{=}-m\,\Im\brac{\sum_{|j|\leq N}\int_{0}^{t}{E(u^N(t'))^{m-1}
			 \int_{\T^d}\Delta \cj{u^N}\phi^Ne_j \,dx} \,d\beta_j(t')
		}\label{energy-a2}\\
	&\phantom{=}+(k+1)m\sum_{|j|\leq N}{\int_0^t E(u^N(t'))^{m-1}
			\int_{\T^d} {|u^N|^{2k}|\phi^N e_j|^2 \,dx\, dt' 
		}}\label{energy-a3}\\
	&\phantom{=}+m\norm{\nabla \phi^N}^2_{\L^2(L^2;L^2)}\intud{0}{t}{E(u^N(t'))^{m-1}}{t'}\label{energy-a4}\\
	&\phantom{=}+ \frac{m(m-1)}{2}\sum_{|j|\le N}\int_{0}^{t}{E(u^N(t'))^{m-2}
		\left|\int_{\T^d}{(-\Delta \cj{u^N}+ |u^N|^{2k}\cj{u^N})\phi e_jdx}\right|^2
	}{dt'}\label{energy-a5}.
\end{align}
\noi
We shall control here only the difficult term \eqref{energy-a1} 
as the other terms are bounded by similar lines of argument. 
	Firstly, by Burkholder-Davis-Gundy inequality (Lemma~\ref{BDG}), 
	we deduce
	\begin{align*}
	\EE\left[\supT\eqref{energy-a1}\right]
	&\le C_m \EE\left[\left\{\sum_{|j|\le N}\int_{0}^{T}
	E(u^N(t'))^{2(m-1)}\left|\int_{\T^d}{|u^N|^{2k}u^N\phi^N e_j}{\,dx}\right|^2
	{\,dt'}\right\}^\frac12\right] .
	\end{align*}
	Then, by duality and the (dual of the) Sobolev embedding $ H^1(\T^d) \hookrightarrow L^{2k+2}(\T^d)$, we have
\begin{align*}
\left|\int_{\T^d}{|u^N|^{2k}u^N\phi^N e_j}{\,dx}\right| &\leq 
  \left\| |u^N|^{2k}u^N\right\|_{H^{-1}(\T^d)} \|\phi^Ne_j\|_{H^1(\T^d)}\\
  &\lesssim \left\| |u^N|^{2k}u^N\right\|_{L^{\frac{2k+2}{2k+1}}(\T^d)} \|\phi e_j\|_{H^1(\T^d)}\\
  &\lesssim E(u^N)^{\frac{2k+1}{2k+2}} \|\phi e_j\|_{H^1(\T^d)}, 
\end{align*}
provided that $1+\frac1k \geq \frac{d}{2}$. 
Therefore, by  H\"older and Young inequalities, and similarly to the control of \eqref{mass-a1}, 
we have
	\begin{align*}
	\EE\left[\supT\eqref{energy-a1}\right] &\leq 
	C_m{\|\phi\|}^2_{\L^2(L^2;H^1)} T_0 \EE\left[ \sup_{t\in[0,T]} E(u^N(t))^{m-1} \right] 
 + \frac18\EE\left[ \sup_{t\in[0,T]} E(u^N(t))^{m-\frac{1}{2k+2}} \right]\\
 &\leq \tilde{C}_m{\|\phi\|}^2_{\L^2(L^2;H^1)} T_0 \EE\left[ \sup_{t\in[0,T]} E(u^N(t))^{m-1} \right] 
 + \frac18\EE\left[ \sup_{t\in[0,T]} E(u^N(t))^{m} \right],
	\end{align*}
where in the last step we used interpolation. 

We also have 
	\begin{align*}
	\EE\left[\supT\eqref{energy-a2}\right] &\le C_m\norm{\phi}_{\L^2(L^2;H^1)}\EE\left[\supT E(u^N(t))^{m-1}\right]+\frac18\EE\left[\supT E(u^N(t))^m\right]\\
	\EE\left[\supT\eqref{energy-a3}\right] &\le C_m\norm{\phi}_{\L^2(L^2;H^1)}^2+\frac18\EE\left[\supT E(u^N)^m\right] \\
	\EE\left[\supT\eqref{energy-a4}\right] &\le
	  C_m {\|\phi\|}_{\L^2(L^2;H^1)}^2 \EE\left[\supT E(u^N(t))^{m-1}\right] ,\\
	\EE\left[\supT\eqref{energy-a5}\right] &\le C\norm{\phi}^2_{\L^2(L^2;H^1)}
	 +\EE\left[\supT H(u^N(t))^{m-1}\right]+\frac18\EE\left[\supT H(u^N(t))^m\right].
	\end{align*}	
Gathering all the estimates, there exists $C_m>0$ such that 
	\begin{align*}
	\EE\left[\supT E(u^N(t))\right]&\le E(u_0)^m + C_{m} T_0\,\EE\left[\supT E(u^N(t))^{m-1}\right]
	+\frac{1}{2}\,\EE\left[\supT E(u^N(t))^m\right].
	\end{align*}
Similarly to passing from \eqref{apriori-est-a-M1} to \eqref{apriori-est-M-fin} and 
by induction on $m$, we deduce that
	\begin{equation}\label{energy-est-a}
	\EE\left[\supT E(u^N(t))^m\right]\lesssim 1 ,
	\end{equation}
with constant independent of $N$. 

\end{proof}
We now argue that the probabilistic a priori bounds in fact hold for solutions of the original SNLS.

\begin{corollary}\label{apriori-est-H1-L2}
For $u$ solution to \eqref{SNLS} with \eqref{add-noise}, 
the estimates 
\eqref{l2-est-a} and \eqref{h1-est-a} hold with $u$ in place of $u^N$ 
under the same assumptions as Proposition~\ref{apriori-est-M-a}. 
\end{corollary}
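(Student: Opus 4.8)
The plan is to descend the uniform-in-$N$ bounds of Proposition~\ref{apriori-est-M-a} to the untruncated solution $u$ by passing to the limit $N\to\infty$ and invoking Fatou's lemma. Fix the stopping time $T\in[0,T_0]$ on which $u$ is defined. Since, as noted after \eqref{SNLS_N}, the truncated solutions $u^N$ exist on $[0,T]$ as well, Proposition~\ref{apriori-est-M-a} applies to each $u^N$ on $[0,T]$ and gives $\EE\big[\sup_{[0,T]}M(u^N)^m\big]\le C_1$ and $\EE\big[\sup_{[0,T]}E(u^N)^m\big]\le C_2$ with $C_1,C_2$ independent of $N$. It therefore suffices to prove that $u^N\to u$ strongly enough to let these bounds descend to $u$.

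First I would show that $u^N\to u$ in $C([0,T];H^s(\T^d))$ almost surely along a subsequence. Setting $v^N:=u-u^N$ and subtracting the mild formulations \eqref{SNLS-mild} for $u$ and for $u^N$, I split the nonlinear discrepancy as $|u|^{2k}u-P_{\le N}(|u^N|^{2k}u^N)=(I-P_{\le N})(|u|^{2k}u)+P_{\le N}\big(|u|^{2k}u-|u^N|^{2k}u^N\big)$, where the second factor is multilinear in $u,u^N$ and linear in $v^N$. On a short initial subinterval $[0,T_*]$ on which $\xsbt{u}{s}{b}{[0,T_*]},\xsbt{u^N}{s}{b}{[0,T_*]}\le R$, the linear estimates (Lemma~\ref{lem:linests}), the multilinear estimates (Lemma~\ref{trilin} or~\ref{multilin}), and the time-localization gain (Lemma~\ref{xsb-time-loc}) yield
\begin{equation*}
\xsbt{v^N}{s}{b}{[0,T_*]}\les \norm{(I-P_{\le N})u_0}_{H^s}+T_*^{\delta}\xsbt{(I-P_{\le N})(|u|^{2k}u)}{s}{-\frac12+\delta}{[0,T_*]}+\xsbt{\Psi-\Psi^N}{s}{b}{[0,T_*]}+CT_*^{\delta}R^{2k}\xsbt{v^N}{s}{b}{[0,T_*]},
\end{equation*}
and for $T_*$ small the last term is absorbed. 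The first term vanishes deterministically; the second vanishes almost surely by dominated convergence, since $|u|^{2k}u\in X^{s,-\frac12+\delta}([0,T])$ almost surely and $P_{\le N}\to I$ strongly; and, writing $\Psi^N(t)=\int_0^{t}S(t-t')\Phi^N\,dW(t')$ with $\Phi^N:=P_{\le N}\phi P_{\le N}$, the difference $\Psi-\Psi^N$ is precisely the additive stochastic convolution associated with $\phi-\Phi^N$, so Lemma~\ref{stoc-conv-est-add} forces its $X^{s,b}([0,T])$-norm to tend to $0$ in $L^{\sigma}(\Omega)$ because $\norm{\phi-\Phi^N}_{\L^2(L^2;H^s)}\to0$. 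Iterating on consecutive short subintervals covering $[0,T]$ (with $u_0$ replaced by $v^N$ at each left endpoint, controlled from the previous step, and using $\xsbt{u}{s}{b}{[0,T]}<\infty$ almost surely) propagates the convergence to the whole interval; since the Duhamel and stochastic-convolution parts lie in $X^{s,\frac12+}([0,T])\hookrightarrow C([0,T];H^s)$, convergence in $X^{s,b}$ upgrades to convergence in $C([0,T];H^s)$, and passing to a subsequence makes it almost sure.

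Finally, along this subsequence $M(u^{N_k}(t))\to M(u(t))$ and $E(u^{N_k}(t))\to E(u(t))$ uniformly in $t\in[0,T]$ almost surely — for the energy using $s\ge1$ together with the Sobolev embedding $H^1(\T^d)\hookrightarrow L^{2k+2}(\T^d)$ in the energy-subcritical range to handle the potential term — whence $\sup_{[0,T]}M(u^{N_k})^m\to\sup_{[0,T]}M(u)^m$ and likewise for $E$. Fatou's lemma then gives $\EE\big[\sup_{[0,T]}M(u)^m\big]\le\liminf_k\EE\big[\sup_{[0,T]}M(u^{N_k})^m\big]\le C_1$ and the analogous bound $\le C_2$ for the energy, which is the claim.

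The main obstacle I anticipate is the convergence step $u^N\to u$: one must accommodate the random, stopping-time nature of the interval of existence and secure uniform-in-$N$ control of $\xsbt{u^N}{s}{b}{\,\cdot\,}$ in order both to close the contraction-type difference estimate and to carry out the subinterval patching. Once strong convergence on $[0,T]$ is in hand, the limit passage via Fatou is routine.
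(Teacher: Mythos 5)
Your overall strategy is the same as the paper's: compare the mild formulations of $u$ and $u^N$, show the linear part, the stochastic convolution, and the Duhamel term each converge (the first by projection tails, the noise by recognizing $\Psi-\Psi^N$ as the additive convolution of a Hilbert--Schmidt tail, the nonlinearity by the multilinear estimates plus an absorption/contraction step), and then transfer the uniform-in-$N$ bounds of Proposition~\ref{apriori-est-M-a} in the limit. Your decomposition $\phi-\Phi^N=P_{>N}\phi+P_{\le N}\phi\,\pi_{>N}$ is exactly the paper's splitting into $P_{>N}\phi$ and $\pi_NP_{\le N}\phi$. The main stylistic difference is that the paper works directly in $F_T=L^2(\Omega;C([0,T];H^s))$ and closes the difference estimate via the uniform $\tfrac12$-contraction property of $\Lambda^N$ (writing $\|u-u^N\|\le\|\Lambda(u)-\Lambda^N(u)\|+\tfrac12\|u-u^N\|$ and rearranging), then uses dominated convergence, whereas you iterate a pathwise absorption argument over short subintervals and finish with a subsequence plus Fatou; both close the same loop, and your version is if anything more explicit about the interval patching.

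There is, however, one genuine misstep in your upgrade to $C([0,T];H^s)$: you assert that the stochastic-convolution part lies in $X^{s,\frac12+}([0,T])\hookrightarrow C([0,T];H^s)$. It does not --- $\Psi$ has temporal regularity only $b=\frac12-$, which is precisely why the local theory in Section~\ref{sect:LWP} is forced to run at $b<\frac12$ and why continuity of $\Psi$ must be proved separately by the factorization method. Consequently, convergence of $\Psi-\Psi^N$ in $X^{s,b}$ with $b<\frac12$ does not yield convergence in $C_tH^s_x$ by embedding. The repair is exactly the paper's move: apply Lemma~\ref{cts-stoc-conv-add} (the factorization bound \eqref{lem3p4:est}) to the tail operators $P_{>N}\phi$ and $\pi_NP_{\le N}\phi$, which controls $\EE\big[\sup_{t\in[0,T]}\|\Psi(t)-\Psi^N(t)\|_{H^s}^\sigma\big]$ by $\norm{P_{>N}\phi}_{\L^2(L^2;H^s)}^\sigma+\norm{\pi_NP_{\le N}\phi}_{\L^2(L^2;H^s)}^\sigma\to0$; the embedding argument is legitimate only for the Duhamel part, where the multilinear estimates at $b'=\frac12+$ (as in \eqref{I-est1}--\eqref{II-est1}) supply the needed $X^{s,\frac12+}$ control of the difference. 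With that substitution your argument goes through and matches the paper's proof in substance.
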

\begin{proof}
Let $\Lambda^N$ be the mild formulation of \eqref{SNLS_N}, more precisely, 
\begin{equation}\label{mild-N}
\Lambda^N(v):=S(t)u_0^N\pm i\int_0^tS(t-t')P_{\le N}\left(|v|^{2k}v\right)(t')\,dt'-i\int_0^tS(t-t')\phi^N\,dW^N(t')\,.
\end{equation}
Then $\Lambda^N$ is a contraction on a ball in $X^{1,\frac12-}([0,T])$ and has a unique fixed point $u^N$ that satisfies the bounds in Proposition \ref{apriori-est-M-a}. Hence it suffices to show that $u^N$ in fact converges to $u$ in $F_T:=L^2(\Omega;C([0,T]; H^s_x))$ for $s=0,1$. We only show $s=1$ since the proof of $s=0$ is the same. To this end, we consider the mild formulations of $u^N$ and $u$ and show that each piece of $u^N$ converges to the corresponding piece in $u$. Clearly, $S(t)u_0^N\to S(t)u_0$ in $F_T$. For the noise, let $\Psi^N(t)$ denote the stochastic convolution in \eqref{mild-N}. Then
\begin{align*}
\Psi(t)-\Psi^N(t) &=\left(
\sum_{|n|>N}\sum_{j\in\Z^d}+
\sum_{|n|\le N}\sum_{|j|>N}
\right) e_n\int_0^t e^{i(t-t')|n|^2} \ft{\phi e_j} (n) d\beta_j(t')\\
	&= \int_0^t S(t-t')P_{>N}\phi \,dW(t')+ \int_0^t S(t-t')\pi_{N} P_{\le N}\phi \,dW(t')\,,
\end{align*}
where $\pi_N$ denotes the projection onto the linear span of the orthonormal vectors $\{e_j: |j|>N\}$. By Lemma \ref{cts-stoc-conv-add}, the above is controlled by $$\norm{P_{>N}\circ\phi}_{\L^2(L^2;H^1)}^{2}+\norm{\pi_N P_{\le N} \phi}_{\L^2(L^2;H^1)}^{2}\,,$$ which tends to $0$ as $N\to\infty$ because both norms are tails of convergent series.

Finally we treat the nonlinear terms
\[Du(t):=\int_{0}^{t}{S(t-t')|u|^{2k}u(t')}{\,dt'}\quad\mbox{ and }\quad
D^{\le N}u(t):=\int_{0}^{t}{S(t-t')P_{\le N}\brac{|u|^{2k}u}(t')}{\,dt'}\,.\]
We first fix a path for which local well-posedness holds, and prove that $Du-D^{\le N}u\to 0$ in $X^{1,\frac12+}$. Firstly,
\begin{align*}
\norm{Du-D^{\le N}u}_{X^{1,\frac12+}([0,T])}
&\le\norm{\int_{0}^{t}{S(t-t')P_{\le N}(|u|^{2k}u-|u^N|^{2k}u^N)(t')}{\,dt'}}_{X^{1,\frac{1}{2}+}([0,T])}\\
&\phantom{= } \quad +
\norm{P_{>N}Du}_{X^{1,\frac{1}{2}+}([0,T])}
\end{align*}
By Lemmas \ref{lem:linests}, \ref{trilin} and \ref{multilin},  we have
\begin{align}
	\Irm &\les \left(\norm{u}_{X^{1,\frac12-}([0,T])}^{2k}
	  +\norm{u^N}_{X^{1,\frac12-}([0,T])}^{2k}\right)\norm{u-u^N}_{X^{1,\frac12-}([0,T])}\label{I-est1}\\
	\IIrm &\les \norm{u}_{X^{1,\frac12-}([0,T])}^{2k+1}\label{II-est1}	
\end{align} 
In particular, \eqref{II-est1} implies $Du\in X^{1,\frac12+}([0,T])$, and hence $\IIrm\to 0$ as $N\to\infty$. We claim that $\Irm\to 0$ as $N\to\infty$ as well. 
Indeed, $\Lambda^N$ and $\Lambda$ are contractions with fixed points $u^N$ and $u$ respectively, hence
\begin{align*}
\norm{u-u^N}_{X^{1,\frac12-}([0,T])}
&\le \norm{\Lambda(u)-\Lambda^N(u)}_{X^{1,\frac12-}([0,T])}+
 \norm{\Lambda^N(u)-\Lambda^N(u^N)}_{X^{1,\frac12-}([0,T])}\\
	&\le \norm{\Lambda(u)-\Lambda^N(u)}_{X^{1,\frac12-}([0,T])}+
	\frac12 \norm{u-u^N}_{X^{1,\frac12-}([0,T])}\,.
\end{align*}
By rearranging, it suffices to show that the first term on the right-hand side above tends to $0$ as $N\to\infty$. Now
\begin{align*}
\norm{\Lambda(u)-\Lambda^N(u)}_{X^{1,\frac12-}([0,T])}
	&\le \norm{P_{>N}S(t)u_0}_{X^{1,\frac12-}([0,T])}\\
	&\quad + \norm{P_{>N}\int_0^tS(t-t')|u|^{2k}u(t')\,dt'}_{X^{1,\frac12-}([0,T])}\\
&\quad +\norm{\Psi^{>N}}_{X^{1,\frac12-}([0,T])}\,.
\end{align*}
By similar arguments as above, all the terms on the right go to $0$ as $N\to\infty$. This proves our claim. By the embedding $X^{1,\frac{1}{2}+}([0,T])\subset C([0,T];H^1(\T^d))$, we have that
\begin{equation}\label{nonlinear-convergence}\left\|Du-D^{\le N}u\right\|_{C([0,T];H^1)}\to 0
\end{equation}
almost surely as $N\to\infty$. By the dominated convergence theorem, we have $Du-D^{\le N}u\to 0$ in $F_T$. This concludes our proof.
\end{proof}
Finally, we conclude the proof of global well-posedness for the additive case.
\begin{proof}[Proof of Theorem \ref{gwp:add}]
Let $s\in\{0,1\}$ be the regularity of $u_0$ from Theorem \ref{gwp:add}. Let $\eps>0$ and $T>0$ be given. We claim that there exists an event $\Omega_\eps$ such that a solution 
$u\in X^{s,b}([0,T])\cap C([0, T]; H^s(\T^d))$ exists on $[0,T]$ in $\Omega_\eps$ 
and $\P(\Omega\setminus \Omega_\eps)<\eps$. If this claim holds, then by setting
\[\Omega^*=\bigcup_{n=1}^\infty\Omega_\frac{1}{n},\]
we have that $\P(\Omega^*)=1$ and $u$ exists on $[0,T]$, proving the theorem. Let $\delta\in(0,1)$ be a small quantity chosen later. We subdivide $[0,T]$ into $M=\left\lceil\frac{T}{\delta}\right\rceil$ subintervals $I_k=[(k-1)\delta,k\delta]$. Let
\[\Omega_0=\bigcap_{k=1}^M\left\{\omega\in\Omega:\xsbt{\intud{(k-1)\delta}{t}{S(t-t')\phi}{W(t')}}{s}{b}{I_k}\le L\right\},\]
where $L>0$ is some large quantity determined later. Now by Chebyshev's inequality and Lemma \ref{stoc-conv-est-add},
\begin{align*}
\P(\Omega\setminus\Omega_0)
&=\sum_{k=1}^M\P\brac{\xsbt{\intud{(k-1)\delta}{t}{S(t-t')\phi}{W(t')}}{s}{b}{I_k}>L}\\
&\le \sum_{k=1}^M\frac{1}{L^2}\EE\left[\xsbt{\intud{(k-1)\delta}{t}{S(t-t')\phi}{W(t')}}{s}{b}{I_k}^2\right]\\
&\lesssim \sum_{k=1}^M\frac{\delta(\delta^2+1)}{L^2}\norm{\phi}_{\L^2(L^2;L^2)}^2\\
&\le \frac{2M\delta}{L^2}\norm{\phi}_{\L^2(L^2;L^2)}^2\\
&\lesssim \frac{T}{L^2}\norm{\phi}_{\L^2(L^2;L^2)}^2\,.
\end{align*}
By choosing $L=L(\eps,T,\phi)$ sufficiently large, we may therefore bound $\P(\Omega^c_0)$ above by $\frac{\eps}{2}$. Now let 
\[ R=\max\left\{\norm{u_0}_{H^s}, L\right\}\,.\]
By local theory, there exists a unique solution $u(t)$ to \eqref{SNLS} with time of existence $T_\text{max}$ given in \eqref{time-of-existence-a}. In particular, we note that for $\omega\in\Omega_0$,
\begin{equation}\label{toe-ineq}
c\Big({\norm{u_0}_{H^s}+\norm{\Psi}_{X^{s,b}_{[0,\delta]}}}\Big)^{-\theta}
	\ge c\big({R+L}\big)^{-\theta},
\end{equation}
where $c$ is as in \eqref{time-of-existence-a}. 
By choosing $\delta=\delta(R,L):=c ({R+L})^{-\theta}$, 
we see that $u(t)$ exists for $t\in[0, \delta]$ for all $\omega\in\Omega_0$. Now define
\[\Omega_1=\left\{\omega\in\Omega_0:\norm{u(\delta)}_{H^s}\le R\right\}\,.\]
By the same argument, $u(t)$ exists for $t\in(\delta,2\delta)$ for all $\omega\in\Omega_1$. Iterating this argument, we have a chain of events $\Omega_0\supseteq \Omega_1 \supseteq \dots \supseteq  \Omega_{M-1}$ where
\[\Omega_k=\left\{\omega\in\Omega_{k-1}:\norm{u(k\delta)}_{H^s}\le R\right\} \]
and $u(t)$ exists for all $t\in[0,(k+1)\delta]$ on $\Omega_k$. Setting $\Omega_\eps:=\Omega_{M-1}$, $u(t)$ exists on the full interval $[0,T]$ on $\Omega_\eps$. It remains to check that $\Omega\setminus\Omega_\eps$ remains small. 
By Corollary \ref{apriori-est-H1-L2}, we have
\begin{align*}
\P(\Omega_\eps)&\le\P(\Omega\setminus\Omega_0)+\sum_{k=0}^{M-1}\P(\Omega_{k+1}^c\cap\Omega_k)\\
&\le\frac{\eps}{2}+\sum_{k=0}^{M-1}\P\brac{\left\{\norm{u((k+1)\delta)}_{H^s}>R\right\}\cap\Omega_k}\\
&\le\frac{\eps}{2}+\sum_{k=0}^{M-1}\frac{1}{R^p}\EE\left[\one_{\Omega_k}\norm{u((k+1)\delta)}_{H^s}^p\right]\\
&\le\frac{\eps}{2}+\frac{MC_1}{R^p}\\
&\le \frac{\eps}{2}+\frac{2TC_1(R+L)^{\theta}}{cR^p}\,,
\end{align*}
for any $p\in\NN$. We further enlarge $R$ if necessary by setting
\[R=\max\left\{\frac{2TC_1}{c}+1, L, \norm{u_0}_{H^s}\right\}\,,\]
where have that
\[\P(\Omega_\eps)\le \frac{\eps}{2}+2^\theta R^{\theta-p+1}\,.\]
This is smaller than $\eps$ provided we choose $p=p(\eps,\theta)>0$ sufficiently large. Thus $\Omega_\eps$ satisfies our claim.
\end{proof}

\subsection{SNLS with multiplicative noise}
In order to globalize solutions of SNLS, for the multiplicative noise case, 
we need to prove probabilistic control  of the $X^{s,b}$-norm of the solutions of the truncated SNLS uniformly in the truncation parameter (Lemma~\ref{finite-xsb}). This requires a priori bounds on mass and energy of solutions.  

From Subsection \ref{sect:LWPm}, we obtained a local solution of the multiplicative \eqref{SNLS} with time of existence
\[\tau^*=\lim_{R\to\infty}\tau\sub{R}\,.\]
Under the hypotheses of Theorem~\ref{gwp:mult}, 
we shall prove global well-posedness by showing that $\tau^*=\infty$ almost surely.
\begin{proposition}\label{apriori-est-M-m}
Let $T_0>0$ and $\phi$ be as in Theorem~\ref{gwp:mult}. 
 Suppose that 
	 $u(t)$ is a solution for \eqref{SNLS} with $F(u,\phi\xi)=u\cdot\phi\xi$ on $t\in[0,T]$  
	for some stopping time $T\in[0,T_0\wedge \tau^*)$. Let $C(\phi)$ be as in \eqref{defn:Cphi}. Then for any $m\in\NN$, there exists 
	$C_1=C_1(m,M(u_0), T_0, C(\phi))>0$ such that
	\begin{align}
	\EE\left[\sup_{0\le t\le T} M(u(t))^m \right] 
	\le C_1\,.
	\end{align}
	Furthermore, if\eqref{SNLS} is defocusing,  
	there exists $C_2=C_2(m,E(u_0), T_0,C(\phi))>0$ such that	
	\begin{align}
	\label{h1-est-m}
	\EE\left[\sup_{0\le t\le T} E(u(t))^m \right] 
	&\le C_2\,.
	\end{align}
\end{proposition}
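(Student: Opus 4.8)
The plan is to mirror the additive argument of Proposition~\ref{apriori-est-M-a}, but to replace the final induction on $m$ by a Gronwall argument, since the multiplicative noise scales with $u$ and therefore contributes to the It\^o expansion at the same power of the mass (resp.\ energy) rather than a strictly lower one. As in the additive case, I would carry out the computation on the frequency-truncated solutions $u^N$ of \eqref{SNLS_N} (with multiplicative forcing), where It\^o's lemma applies rigorously because $u^N$ is frequency-localized, prove the bounds uniformly in $N$, and then transfer them to $u$ by passing to the limit $N\to\infty$ exactly as in Corollary~\ref{apriori-est-H1-L2}.

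For the mass I would apply It\^o's lemma to $M(u^N)^m$. The deterministic drift vanishes by the usual mass conservation of NLS (the kinetic and potential contributions are purely imaginary, the potential one using $P_{\le N}u^N = u^N$). With the stochastic part of $du^N$ equal to $-iu^N\phi^N\,dW^N$, the remaining terms are a martingale with integrand $\int_{\T^d}|u^N|^2\phi^N e_j\,dx$ together with an It\^o correction built from $\sum_j\|u^N\phi^N e_j\|_{L^2}^2$ and $\sum_j\big|\int_{\T^d}|u^N|^2\phi^N e_j\,dx\big|^2$. The key point is that Lemma~\ref{lem3p5} with $s=0$, $r=1$ gives $\|u^N\phi^N e_j\|_{L^2}\les\|\phi^N e_j\|_{\F L^{0,1}}\|u^N\|_{L^2}$, so after summing in $j$ every stochastic term is bounded by $C(\phi)^2 M(u^N)^m$. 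Applying Burkholder-Davis-Gundy (Lemma~\ref{BDG}) to the martingale, followed by Cauchy-Schwarz and Young's inequality, I would absorb a factor $\tfrac12\,\EE[\supT M(u^N)^m]$ on the left and be left with $f(t)\le 2M(u_0)^m + C\int_0^t f(s)\,ds$ for $f(t):=\EE[\sup_{[0,t]}M(u^N)^m]$; Gronwall then yields the $N$-uniform bound $C_1 = 2M(u_0)^m e^{CT_0}$. As in the additive proof, the finiteness of $f$ needed before the absorption step would be justified by first stopping at $T_R:=\sup\{t\le T:M(u^N(t))\le R\}$ and letting $R\to\infty$ by monotone convergence.

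For the energy I would again apply It\^o to $E(u^N)^m$. First I would check that the deterministic drift vanishes: although the truncated nonlinearity $P_{\le N}(|u^N|^{2k}u^N)$ differs from $|u^N|^{2k}u^N$, the localization $P_{\le N}u^N=u^N$ lets one move $P_{\le N}$ onto the already localized factors $\Delta u^N$ and $P_{\le N}(|u^N|^{2k}u^N)$, so the kinetic-potential cross terms and the purely potential term are real and the drift is purely imaginary. The genuinely new work is in the stochastic terms, which now carry an extra factor of $u^N$: the martingale integrands are $\int_{\T^d}\overline{\Delta u^N}\,u^N\phi^N e_j\,dx$ (integrated by parts into $\int\nabla\overline{u^N}\cdot\nabla(u^N\phi^N e_j)\,dx$) and $\int_{\T^d}|u^N|^{2k+2}\phi^N e_j\,dx$, while the It\^o corrections involve $\sum_j\|\nabla(u^N\phi^N e_j)\|_{L^2}^2$ and $\sum_j\||u^N|^{k}u^N\phi^N e_j\|_{L^2}^2$. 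These I would control by Lemma~\ref{lem3p5} with $s=1$ (using the hypothesis $\phi\in\L^2(L^2;\F L^{1,r})$) to obtain $\|\nabla(u^N\phi^N e_j)\|_{L^2}\les\|\phi^N e_j\|_{\F L^{1,r}}\|u^N\|_{H^1}$, together with the Sobolev embedding $H^1(\T^d)\hookrightarrow L^{2k+2}(\T^d)$ available in the energy-subcritical regime $1+\tfrac1k\ge\tfrac d2$. Summing in $j$ bounds every stochastic term by $C(\phi)^2$ times a combination of $E(u^N)$ and $M(u^N)$ at total power $m$; invoking the already-established moment bounds on the mass (via H\"older in $\omega$) reduces everything to $E(u^N)$, and the same BDG/Young/Gronwall scheme then yields the $N$-uniform bound \eqref{h1-est-m}.

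The main obstacle I anticipate is the energy estimate: both verifying the exact cancellation of the deterministic drift under the $P_{\le N}$ truncation and, more substantially, controlling the product terms $u^N\phi^N e_j$ at $H^1$ regularity. Unlike the additive case, where only $\phi^N e_j$ (or $\nabla\phi^N e_j$) appears, the multiplicative structure forces products of the solution with the noise coefficients, so the extra assumption \eqref{phiFLsr} on $\phi$ and the energy-subcriticality constraint enter essentially here. A secondary subtlety is the bookkeeping of the coupling between the mass and energy bounds and the justification of the absorption step through the $T_R$-stopping, but these are routine once the mass bound is in hand. Finally, transferring the uniform bounds from $u^N$ to $u$ proceeds as in Corollary~\ref{apriori-est-H1-L2}, using the convergence $u^N\to u$ together with Fatou's lemma.
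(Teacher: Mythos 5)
Your proposal matches the paper's own proof in essentially every respect: you apply It\^o's lemma to $M(u^N)^m$ and $E(u^N)^m$ for the frequency-truncated system \eqref{SNLS_N}, identify exactly the martingale and It\^o-correction terms the paper works with, control them via the Burkholder--Davis--Gundy inequality together with Lemma~\ref{lem3p5} (i.e.\ through $C(\phi)$) and the Sobolev embedding $H^1(\T^d)\hookrightarrow L^{2k+2}(\T^d)$ in the energy-subcritical regime, and then close with the same absorption-plus-Gronwall scheme (replacing the induction on $m$ of the additive case, just as the paper does) before passing to the limit $N\to\infty$. The only cosmetic deviations --- invoking the mass bound with H\"older in $\omega$ where the paper bounds everything by the energy directly, and spelling out the Young-inequality absorption that the paper compresses into ``rearrangement arguments and Gronwall's inequality'' --- do not change the argument, so your proof is correct and essentially identical to the paper's.
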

\begin{proof}
We consider the 
frequency truncated equation \eqref{SNLS_N} and apply It\^o's Lemma to obtain
\begin{align}
M(u^N(t))^m&=M(u_0^N)^m\notag\\
&\phantom{=}+
m\,\Im\brac{\sum_{|j|\le N}\int_{0}^{t}{M(u^N(t'))^{m-1}\int_{\T^d}|u^N(t')|^2\phi^Ne_j\,dx}{\,d\beta_j(t')}}\label{mass-m1}\\
&\phantom{=}+m(m-1)\sum_{|j|\le N}\int_{0}^{t}{M(u^N(t'))^{m-2}\left|\int_{\T^d}|u^N(t')|^2\phi^Ne_j\,dx\right|^2}{\,dt'}\label{mass-m2}\\
&\phantom{=}+m(m-1)\sum_{|j|\le N}\int_0^t{M(u^N(t'))^{m-1} \int_{\T^d}|u(t')\phi e_j|^2\,dx}{\,dt'}\label{mass-m3}\,.
\end{align}
To bound \eqref{mass-m1}, we use Burkholder-Davis-Gundy inequality (Lemma~\ref{BDG}) 
and use a similar argument as in the proof of Lemma \ref{stoc-conv-est-mult} to get
\begin{align*}
\EE\left[\supT\eqref{mass-m1}\right]
	&\les \EE\left[\sum_{|j|\le N}
	\left(
	\int_{0}^{T}M(u^N(t'))^{2(m-1)}\left|\int_{\T^d}|u^N(t')\phi e_j|^2\,dx\right|^2\,dt'
	\right)^\frac12\right]\\
&\le C(\phi)^2\EE\left[\left(\int_0^TM(u^N(t'))^{2m}\right)^\frac12\right]\\
	&\le C(\phi)^2 \brac{\EE\left[\supT M(u^N(t))^m\right]}^\frac{1}{2}\brac{\EE\left[\int_{0}^{T}{{M(u^N(t'))^{m}}}{\,dt'}\right]}^\frac{1}{2}
\end{align*}
Similarly, one obtains
\begin{align*}
\EE\left[\supT\left\{\eqref{mass-m2}+\eqref{mass-m3}\right\}\right]
  &\les C(\phi)\EE\left[\int_0^TM(u^N(t'))^m\,dt'\right]
\end{align*}
Hence there is a constant $C_1=C_1(m, M(u_0), T, C(\phi))$ such that
\begin{align*}
\EE\left[\supT M(u^N(t))^m\right]
	&\le C_1 +C_1\,\EE\left[\int_0^TM(u^N(t'))^m\,dt'\right]\\
&\phantom{=}+C(\phi)^2 \brac{\EE\left[\supT M(u^N(t))^m\right]}^\frac{1}{2}\brac{\EE\left[\int_{0}^{T}{{M(u^N(t'))^{m}}}{\,dt'}\right]}^\frac{1}{2}
\end{align*}
The left-hand side is bounded above by $3\mathcal{M}$, 
where $\mathcal{M}$ is maximum of the three terms of the right-hand side. 
In any of the three cases, 
we may conclude the proof via simple rearrangement arguments and Gronwall's inequality.

Turning to the energy, we use It\^o's Lemma and the defocusing equation to obtain 
that $E(u^N(t))^m$ equals 
\begin{align}
&E(u^N_0)^m\label{energy-m0}\\
&+ m\,\Im\brac{\sum_{|j|\le N}\int_{0}^{t}{
		E(u^N(t'))^{m-1}\int_{\T^d}{|u^N|^{2(k+1)}\phi^N e_j}{\,dx}
	}{\,d\beta_j(t')}}\label{energy-m1}\\
&-m\,\Im\brac{\sum_{|j|\le N}\int_{0}^{t}{E(u^N(t'))^{m-1}
		\int_{\T^d}{(\Delta\cj{u^N}){u^N\phi^N e_j}}\,dx
	}{\,d\beta_j(t')}}\label{energy-m2}\\
&+ m(k+1)\sum_{|j|\le N}{\int_{0}^{t}{E(u^N(t'))^{m-1}
		\int_{\T^d}{|u^N|^{2(k+1)}|\phi^N e_j|^2}{\,dx}
	}{\,dt'}}\label{energy-m3}\\
&+m\sum_{|j|\le N}\int_{0}^{t}{E(u^N(t'))^{m-1}\int_{\T^d}|\nabla{(u^N\phi^N e_j)}(n)|^2}\,dx{\,dt'}\label{energy-m4}\\
&+\frac{m(m-1)}{2}\brac{\sum_{|j|\le N}\int_{0}^{t}{E(u^N(t'))^{m-2}\left|\int_{\T^d}{\brac{-u^N\Delta\cj{u^N}+|u^N|^{2k+1}}\phi^N e_j}{\,dx}\right|^2 }{\,dt'} }\label{energy-m5}
\end{align}
For \eqref{energy-m1}, we use Burkholder-Davis-Gundy inequality (Lemma~\ref{BDG}) to get
\begin{align*}
\EE\left[\supT\eqref{energy-m1}\right]
	&\les\EE\left[\left(\sum_{|j|\le N}\int_0^TE(u^N(t'))^{2(m-1)}
	\left|
	\int_{\T^d}|u^N|^{2k+2}\phi^Ne_j\,dx
	\right|^2\,dt'
	\right)^\frac12\right]\,.
\end{align*}
Now, with $r$ as in Theorem~\ref{lwp:mult}, 
\begin{align*}
\left|
\int_{\T^d}|u^N|^{2k+2}\phi^Ne_j\,dx\right|^2
	&\le\norm{u^N}_{L^{2k+2}_x}^{2(2k+2)}\norm{\phi^Ne_j}_{L^\infty_x}^2
	\le E(u)^{2}\norm{\ft{\phi^Ne_j}}_{\ell^1}^2\\
	& \les E(u)^{2} \|\phi^N e_j\|_{\F L^{s,r}} \,,
\end{align*}
where for the last step see Lemma~\ref{lem3p5}
Therefore, by H\"older's inequality and \eqref{defn:Cphi}, 
\begin{align*}
\EE\left[\supT\eqref{energy-m1}\right]
&\les C(\phi)\,  \EE\left[\left(\int_0^TE((u^N(t')))^{2m}\,dt'\right)^\frac12\right]\\
	&\le C(\phi) \brac{\EE\left[\supT E(u^N(t))^m\right]}^\frac{1}{2}\brac{\EE\left[\int_{0}^{T}{{E(u^N(t'))^{m}}}{\,dt'}\right]}^\frac{1}{2}\,.
\end{align*}
Similarly, we bound the other terms as follows:
\begin{align*}
&\EE\left[\supT\eqref{energy-m2}\right]\les C(\phi) \brac{\EE\left[\supT E(u^N(t))^m\right]}^\frac{1}{2}\brac{\EE\left[\int_{0}^{T}{{E(u^N(t'))^{m}}}{\,dt'}\right]}^\frac{1}{2}\\
&\EE\left[\supT\{\eqref{energy-m3}+\eqref{energy-m4}+\eqref{energy-m5}\}\right]
\les 
C(\phi)^2 \EE\left[\int_0^TE(u^N(t'))^m\,dt'\right]
\end{align*}
It follows that there is a constant $C_2=C_2(m, E(u_0), T, C(\phi))$ such that
\begin{align*}
\EE\left[\supT E(u^N(t))^m\right]
&\le C_2 +C_2\,\EE\left[\int_0^TE(u^N(t'))^m\,dt'\right]\\
&\phantom{=}+C_2 \brac{\EE\left[\supT E(u^N(t))^m\right]}^\frac{1}{2}\brac{\EE\left[\int_{0}^{T}{{E(u^N(t'))^{m}}}{\,dt'}\right]}^\frac{1}{2}\,.
\end{align*}
Arguing in the same way as for the mass of $u^N$ yields the estimate for the energy of $u^N$. This proves the proposition for $u^N$ in place of $u$. The proposition then follows by letting $N\to\infty$. 
\end{proof}
We now prove the following probabilistic a priori bound on the $X^{s,b}$-norm of a solution.
\begin{lemma}\label{finite-xsb}
Let $T, R>0$. Let $u\sub{R}$ be the unique solution of \eqref{SNLSm-R} on $[0,T]$. 
There exists $C_1=C_1(\norm{u_0}_{L^2}, T, C(\phi))$ such that
\[\EE\left[\xsbt{u\sub{R}}{0}{b}{[0,T\wedge\tau_R]}\right]\le C_1\,.\]
Moreover, if \eqref{SNLSm-R} is defocusing,   
there also exists $C_2=C_2(\norm{u_0}_{H^1}, T,C(\phi))$ such that
\[\EE\left[\xsbt{u\sub{R}}{1}{b}{[0,T\wedge\tau_R]}\right]\le C_2\,.\]
The constants $C_1$ and $C_2$ are independent of $R$.
\end{lemma}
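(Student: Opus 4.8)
The plan is to combine the a~priori mass/energy bounds of Proposition~\ref{apriori-est-M-m} with the local multilinear estimates, exploiting that on $[0,\tau_R]$ the cut-off $\eta_R\big(\xsbt{u_R}{s}{b}{[0,t]}\big)$ equals $1$, so that $u_R$ there solves the genuine mild equation \eqref{SNLS-mild}. First I would record the consequences of the a~priori bounds. By Proposition~\ref{apriori-est-M-m} applied on $[0,T\wedge\tau_R]\subset[0,T_0\wedge\tau^*)$, for every $m\in\NN$ the quantity $\EE\big[\sup_{[0,T\wedge\tau_R]}\norm{u_R(t)}_{H^s}^{2m}\big]$ is bounded by a constant depending only on $m$, $\norm{u_0}_{H^s}$, $T$, and $C(\phi)$ --- via the mass when $s=0$, and via $\norm{u}_{H^1}^2\lesssim M(u)+E(u)$ when $s=1$ --- and crucially \emph{independent of $R$}. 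Using Lemma~\ref{stoc-conv-est-mult}, and a cruder $L^\infty_tH^s$-based variant of the factorisation argument of Lemma~\ref{cts-stoc-conv-mult} (in which $\norm{u_R(\mu)}_{H^s}$ is estimated by its supremum rather than through a Sobolev embedding in time), the moments $\EE\big[\xsbt{\Psi[u_R]}{s}{b}{[0,T\wedge\tau_R]}^{\sigma}\big]$ and $\EE\big[\sup_{[0,T\wedge\tau_R]}\norm{\Psi[u_R](t)}_{H^s}^{\sigma}\big]$ are likewise dominated by $C(\phi)^\sigma\,\EE[\sup\norm{u_R}_{H^s}^\sigma]$, hence controlled uniformly in $R$.

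Next, for a fixed $\omega$ I would prove a \emph{pathwise} bound $\xsbt{u_R}{s}{b}{[0,T\wedge\tau_R]}\lesssim_T(\mathcal K+1)^{\gamma}$ for a fixed power $\gamma$, where $\mathcal K:=\sup_{[0,T\wedge\tau_R]}\norm{u_R}_{H^s}+\xsbt{\Psi[u_R]}{s}{b}{[0,T\wedge\tau_R]}+\sup_{[0,T\wedge\tau_R]}\norm{\Psi[u_R]}_{H^s}$ is the realized data level, finite almost surely by Proposition~\ref{gwp-mult-R}. Let $\rho>0$ denote the positive gain in the interval length furnished by Lemma~\ref{xsb-time-loc}, so that combining Lemmas~\ref{lem:linests}, \ref{trilin}, and \ref{multilin} exactly as in \eqref{sect4p1estDu2} gives, on any subinterval $I$, the estimate $\big\|\int_{\cdot}S(t-t')(|u_R|^{2k}u_R)\,dt'\big\|_{X^{s,b}(I)}\lesssim|I|^{\rho}\xsbt{u_R}{s}{b}{I}^{2k+1}$. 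I partition $[0,T\wedge\tau_R]$ into intervals $I_j=[t_{j-1},t_j]$ of length $\le\delta$ with $\delta\sim(\mathcal K+1)^{-2k/\rho}$. Restarting the mild formulation at $t_{j-1}$ and treating the stochastic convolution over $I_j$ as a given forcing of $X^{s,b}(I_j)$-size $a_j$, the term $S(t-t_{j-1})u_R(t_{j-1})$ contributes $\lesssim\norm{u_R(t_{j-1})}_{H^s}$, while the forcing obeys $a_j\lesssim\mathcal K+1$; thus $\xsbt{u_R}{s}{b}{I_j}\le a_j+c\,\delta^{\rho}\xsbt{u_R}{s}{b}{I_j}^{2k+1}$. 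Since $\delta$ is chosen precisely so that $c\,\delta^{\rho}(2a_j)^{2k}\le\tfrac12$, the continuity/bootstrap argument (legitimate because $t\mapsto\xsbt{u_R}{s}{b}{[t_{j-1},t]}$ is continuous and vanishes at $t_{j-1}$) yields $\xsbt{u_R}{s}{b}{I_j}\le2a_j\lesssim\mathcal K+1$.

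Finally I would sum over the $M=\lceil(T\wedge\tau_R)/\delta\rceil\lesssim T(\mathcal K+1)^{2k/\rho}$ subintervals. Writing $\one_{[0,T\wedge\tau_R]}u_R=\sum_j\one_{I_j}u_R$ and invoking the characterization \eqref{Xsblocalsim} (valid for $0\le b<\tfrac12$), the triangle inequality gives $\xsbt{u_R}{s}{b}{[0,T\wedge\tau_R]}\lesssim\sum_j\xsbt{u_R}{s}{b}{I_j}\lesssim M(\mathcal K+1)\lesssim T(\mathcal K+1)^{2k/\rho+1}$, so that $\gamma=2k/\rho+1$ works. Taking expectations and applying the moment bounds from the first step (with $m,\sigma\ge\gamma$) then closes the estimate with a constant depending only on $\norm{u_0}_{H^s}$, $T$, and $C(\phi)$, uniformly in $R$; the cases $s=0$ and $s=1$ differ only in whether the energy bound is invoked, which is exactly where the defocusing, energy-subcritical hypothesis enters for $s=1$.

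The delicate point --- and the reason the naive argument fails --- is the \emph{uniformity in $R$}. On $[0,\tau_R]$ one only knows a~priori that $\xsbt{u_R}{s}{b}{[0,t]}\le R$, so absorbing the superlinear nonlinearity by merely shrinking the time step would force $\delta\sim R^{-2k/\rho}$, hence $\sim R^{2k/\rho}$ subintervals, and a final bound that degenerates as $R\to\infty$. The resolution is that the local existence time for the \emph{genuine} equation depends only on the $H^s$-size of the data and on the stochastic convolution, both of which are controlled by the mass/energy a~priori bounds \emph{independently of $R$}; choosing the subdivision scale to track the data level $\mathcal K$ rather than $R$ is precisely what makes the number of steps, and therefore the expectation, uniform in $R$. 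I expect that tying the subdivision to $\mathcal K$ while keeping the bootstrap, the near-subadditivity of \eqref{Xsblocalsim}, and the stochastic-convolution moment bounds mutually compatible is the main technical obstacle.
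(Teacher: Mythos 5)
Your proposal is correct and follows essentially the same route as the paper's proof: the paper likewise subdivides $[0,T\wedge\tau_R]$ into $\sim T\,K^{2k/\delta}$ intervals whose length $\tau\sim K^{-2k/\delta}$ is tied to the realized data level $K=C_1\norm{u_R}_{C([0,T\wedge\tau_R];H^s)}+\xsbt{\Psi}{s}{b}{[0,T\wedge\tau_R]}$ (your $\mathcal K$) rather than to $R$, runs a continuity argument on each piece (phrased there via the sign of the polynomial $p_\tau(x)=C_2\tau^\delta x^{2k+1}-x+K$, which is your bootstrap), sums using \eqref{Xsblocalsim}, and closes by taking expectations with Proposition~\ref{apriori-est-M-m} and Lemma~\ref{stoc-conv-est-mult}. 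The only difference is cosmetic: you explicitly absorb the restart correction $S(t-t_{j-1})\Psi[u_R](t_{j-1})$ by adding $\sup_t\norm{\Psi[u_R](t)}_{H^s}$ to the data level via Lemma~\ref{cts-stoc-conv-mult}, a point the paper leaves implicit in its iteration step.
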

\begin{proof}
Let $\tau$ be a stopping time so that $0< \tau\le T\wedge\tau_R$. By a similar argument used in local theory, we have
\begin{equation}\label{some-xsb-bound}
\begin{aligned}
\xsbt{u\sub{R}}{s}{b}{[0,\tau]}&\le C_1\norm{u\sub{R}(0)}_{H^s}+C_2\tau^\delta\xsbt{u\sub{R}}{s}{b}{[0,\tau]}^{2k+1}+\xsbt{\Psi}{s}{b}{[0,\tau]}\\
&\le C_1\norm{u\sub{R}}_{C([T\wedge\tau_R];H^s)}+C_2\tau^\delta\xsbt{u\sub{R}}{s}{b}{[0,\tau]}^{2k+1}+\xsbt{\Psi}{s}{b}{[0,T\wedge\tau_R]}\,.
\end{aligned}
\end{equation}
Let $K=C_1\norm{u\sub{R}}_{C([T\wedge\tau_R];H^s)}+\xsbt{\Psi(t)}{s}{b}{[0,T\wedge\tau_R]}$. We claim that if $\tau\sim K^{-\frac{2k}{\delta}}$, then
\begin{align}\label{continuity1}
\xsbt{u\sub{R}}{s}{b}{[0,\tau]}\lesssim K\,.
\end{align}
To see this, we note that the polynomial
\begin{equation}\label{poly}
p\sub{\tau}(x)=C_2\tau^\delta x^{2k+1}-x+K
\end{equation}
has exactly one positive turning point at
\[x'_+=\brac{{(2k+1)C_2\tau^\delta}}^{-\frac{1}{2k}}\,\]
and that $p_\tau(x'_+)<0$ if we choose $\tau=cK^{-\frac{2k}{\delta}}$. For this choice, we have $p_\tau(0)=K>0$ and hence $p_\tau(x)>0$ for $0\le x< x_+$ where $x_+$ is the unique positive root below $x_+'$. Now \eqref{some-xsb-bound} is equivalent to $p_\tau\big(\norm{u_R}_{X^{s,b}([0,\tau])}\big)\ge 0$. 
But since $g(\,\cdot\,):=\norm{u_R}_{X^{s,b}([0,\,\cdot\,])}$ is continuous and $g(0)=0$, we must have
\[g(\tau)<x_+'\sim \tau^{-\frac{\delta}{2k}}\sim K\,,\]
which proves \eqref{continuity1}. Iterating this argument, we find that 
\begin{align}\label{continuityk}
\norm{u\sub{R}}_{X^{s,b}([(j-1)\tau,j\tau])}\lesssim \norm{u\sub{R}}_{C([0,T\wedge\tau_R];H^s)}+\xsbt{\Psi(t)}{s}{b}{[0,T\wedge\tau_R]}\,
\end{align}
for all integer $1\le j\le \lceil\frac{T\wedge\tau_R}{\tau}\rceil=:M$. Putting everything together, we have
\begin{align*}
\norm{u\sub{R}}_{X^{s,b}([0,T\wedge\tau_R])}
&\le\sum_{j=1}^{M}\norm{u\sub{R}}_{X^{s,b}([(j-1)\tau,j\tau])}\\
&\lesssim \frac{T\wedge\tau_R}{\tau}\brac{\norm{u\sub{R}}_{C([0,T\wedge\tau_R];H^s)}+\xsbt{\Psi}{s}{b}{[0,T\wedge\tau_R]} }\\
&\lesssim T\brac{\norm{u\sub{R}}_{C([0,T\wedge\tau_R];H^s)}+\xsbt{\Psi}{s}{b}{[0,T\wedge\tau_R]} }^{\frac{2k}{\delta}+1}\,.
\end{align*}
By Proposition \ref{apriori-est-M-m} and Lemma \ref{stoc-conv-est-mult}, all moments of the last two terms above are finite. This proves Lemma~\ref{finite-xsb}. 
\end{proof}

We can now conclude the proof of Theorem \ref{gwp:mult}.

\begin{proof}[Proof of Theorem \ref{gwp:mult}]
Fix $T>0$. Since $\tau\sub{R}$ is increasing in $R$,
\begin{align*}
\P(\tau^*<T) &=\lim_{R\to\infty}\P(\tau_R<T)= \lim_{R\to\infty}\P\left(\norm{u_R}_{X^{s,b}([0,T\wedge\tau_R])}\ge R\right)\\
&\le \lim_{R\to\infty}\frac1R\EE\left[\norm{u_R}_{X^{s,b}([0,T\wedge \tau_R])}\right]\,.
\end{align*}
But then the right-hand side equals $0$ by Lemma \ref{finite-xsb}. 
It follows that $\tau^*=\infty$ almost surely.
\end{proof}

\end{document}